\newtheorem{theorem}{Theorem}[section]
\newtheorem{Remark} [theorem]{Remark}
\newtheorem{Counter-example}[theorem]{Counter example}
\newtheorem{Claim}[theorem]{Claim}
\newtheorem{Lemma}[theorem]{Lemma}
\newtheorem{Proposition}[theorem]{Proposition}
\newtheorem{Definition}[theorem]{Definition}
\newtheorem*{theorem*}{Theorem}
\newcommand{\supp}{\text{supp}}
\title{Spectral gaps and Fourier decay for self-conformal measures on the plane}
\author{Amir Algom, Federico Rodriguez Hertz, and Zhiren Wang}
\date{}
\begin{document}
\maketitle
\begin{abstract}
Let $\Phi$ be a $C^\omega (\mathbb{C})$ self-conformal IFS on the plane, satisfying some mild non-linearity and irreducibility conditions. We prove a uniform spectral gap estimate for the transfer operator corresponding to the derivative cocycle and every given self-conformal measure. Building on this result, we establish polynomial Fourier decay for any such measure. Our technique is based on a refinement of a method of Oh-Winter (2017) where we do not require separation from the IFS or the Federer property for the underlying measure.
 \end{abstract}

\section{Introduction}
\subsection{Fourier decay for self-conformal measures on the plane}
The Fourier transform of a Borel probability measure $\nu$  on $\mathbb{R}^2$ at $q$ is given by 
\begin{equation*} 
\mathcal{F}_q (\nu) := \int \exp( 2\pi i \langle q, x \rangle ) d\nu(x).
\end{equation*} 
This paper is concerned with the Fourier decay problem: is $\mathcal{F}_q (\nu)=o(1)$ as $|q|\rightarrow \infty$ (this is called the Rajchman property)? And, if this is the case, can one indicate a rate of decay? Of  particular interest is whether there is polynomial decay, that is, whether there exists some $\alpha>0$ such that  $
\left| \mathcal{F}_q \left( g \nu \right) \right| = O\left(\frac{1}{|q|^\alpha} \right)$. If such an $\alpha$ does exist, it is an even more challenging problem to try and evaluate it more precisely.

Beyond intrinsic interest, such estimates are known to have many applications in various fields. This includes e.g. the uniqueness problem in harmonic analysis \cite{li2019trigonometric}, the study of normal numbers \cite{Davenport1964Erdos}, spectral theory of transfer operators \cite{li2018fourier}, and the fractal uncertainty principle \cite{Bour2017dya}, to name just a few examples. We refer to Lyons's survey \cite{Lyons1995survey} for an overview on the Rajchman property, and to Mattila's book \cite{Mattila2015Fourier} for a general introduction to the role of Fourier analysis in geometric measure theory. The very recent survey of Sahlsten \cite{sahlsten2023fourier} contains a broad account of recent results and breakthroughs in the context of dynamically defined measures, which are the focus of our work. More specifically, in this paper we will study the Fourier decay problem for self-conformal measures on the plane; let us now recall their definition.

We first define self-conformal sets. Let $\Phi= \lbrace f_1,...,f_n \rbrace$ be an IFS, an abbreviation for Iterated Function System, defined on $\mathbb{C}$ (for us working in $\mathbb{C}$ or in $\mathbb{R}^2$ makes little difference). We assume each $f_i \in \Phi$ is a  strict injective contraction of the closure of an open set $D\subseteq \mathbb{C}$. In this paper we always assume every $f_i$ is $C^\omega (D)$, though IFSs that are only $C^\alpha$ smooth, $\alpha\geq 1$, are also of great interest. Under these assumptions, there is a unique compact set $\emptyset \neq K=K_\Phi \subseteq D$ so that
\begin{equation} \label{Eq union}
K = \bigcup_{i=1} ^n f_i (K).
\end{equation}
We call  $K$   the \textit{attractor} of $\Phi$ or the \textit{self-conformal set} corresponding to $\Phi$.  We always assume that $K$ is infinite; to this end, it suffices to suppose the fixed point of $f_i$ is not the same as the fixed point of $f_j$ for two elements in $\Phi$. This will thus be always assumed throughout the paper. We call the IFS $\Phi$   \textit{uniformly contracting} if 
$$0< \inf \lbrace |f '(x)|:\, f\in \Phi, x\in D \rbrace \leq \sup \lbrace |f '(x)| :\, f\in \Phi, x\in D \rbrace <1.$$
We proceed to define the coding map. Let  $\mathcal{A}= \lbrace 1,...,n\rbrace$, and let $\omega \in \mathcal{A} ^\mathbb{N}$.  For any $m\in \mathbb{N}$ let
$$f_{\omega|_m} := f_{\omega_1} \circ \dots \circ f_{\omega_m}.$$
Let us prefix some point $x_0 \in \overline{D}$.  The coding map $\pi$ is the surjective  map $\pi: \mathcal{A} ^\mathbb{N} \rightarrow K$ defined as
\begin{equation} \label{Eq coding}
\omega \in \mathcal{A}^{\mathbb N} \mapsto x_\omega:= \lim_{m\rightarrow \infty}  f_{\omega|_m}  (x_0).
\end{equation}

We now define self-conformal measures. Let $(p_1,...,p_n)$ be a probability vector; in particular, $\sum_{i=1} ^n p_i=1$, and we always assume each $p_i>0$. We will denote $(p_1,...,p_n)$ by $\textbf{p}$. Next, we define the infinite product measure $\mathbb{P}=\mathbf{p}^\mathbb{N}$  on $\mathcal{A}^\mathbb{N}$. The projected measure $\nu=\nu_\mathbf{p} = \pi \mathbb{P}$ onto $K$ is called the \textit{self conformal measure} corresponding to $\mathbf{p}$. The assumptions that $K$ is infinite and $\mathbf{p}$ is strictly positive ensure that $\nu$ is non-atomic.
 Alternatively, $\nu_\mathbf{p}$ can be defined as the unique Borel probability  measure supported on $K$ satisfying
$$\nu = \sum_{i=1} ^n p_i\cdot  f_i\nu,$$
where $f_i \nu$  is the push-forward of  $\nu$ by the map $f_i$. In the special case when every $f_i$ is affine,  $\Phi$ is called  a \textit{self-similar IFS}. In this case we call $\nu$ a \textit{self-similar measure}.

We are now in position to state the main result of this paper. We say that a $C^r$ IFS $\Phi$ is  conjugate to an IFS $\Psi$ if they are the same up to a $C^r$ change of coordinates; that is, if $\Phi = \lbrace h\circ g \circ h^{-1}\rbrace_{g\in \Psi}$ where $h$ is a $C^r$ diffeomorphism. 
\begin{theorem} \label{Main Theorem analytic}
Let $\Phi$ be a uniformly contracting $C^{\omega} (D)$ IFS, defined on the closure of the unit disc $D=\overline{B_1(0)}$. Assume:
\begin{enumerate}
\item  $\Phi$ is not conjugate to a self-similar IFS; and

\item  $K_\Phi$ is not a subset of an analytic planar curve.
\end{enumerate}
Then for every self-conformal measure $\nu$ there exist  $\alpha=\alpha(\nu)$ and $ C=C(\nu)>0$ satisfying
\begin{equation*}
\left| \mathcal{F}_q \left(  \nu \right) \right| \leq C\cdot \frac{1}{|q|^\alpha}.
\end{equation*} 
\end{theorem}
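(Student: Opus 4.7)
The plan is to derive Theorem 1.1 from the uniform spectral gap for the derivative-cocycle transfer operator announced in the abstract as the technical core of the paper, following the Oh--Winter strategy but with a refinement that removes separation hypotheses and the Federer assumption.

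First, I would iterate the invariance $\nu = \sum_i p_i f_i \nu$ along a \emph{stopping-time section} rather than along a fixed generation. Given $|q|$ large, let $\Omega_q$ be the finite antichain of words $\omega$ for which $f_\omega(K)$ first drops below diameter $|q|^{-1}$; then
\[
\mathcal{F}_q(\nu) = \sum_{\omega \in \Omega_q} p_\omega \int_K e^{2\pi i \langle q, f_\omega(x)\rangle}\, d\nu(x).
\]
Working with a stopping time rather than a fixed depth is what allows one to bypass separation and the Federer property: the cylinders in $\Omega_q$ all live at a common physical scale $\asymp |q|^{-1}$, regardless of their combinatorial depth.

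Second, on each stopped cylinder I would Taylor expand. Because each $f_i$ is holomorphic, $f_\omega'(x_\omega)$ is a single complex scalar (equivalently, a conformal $2\times 2$ real matrix), and
\[
\langle q, f_\omega(x)\rangle = \langle q, f_\omega(x_\omega)\rangle + \bigl\langle (f_\omega'(x_\omega))^\top q,\, x - x_\omega \bigr\rangle + O\bigl(|q|\cdot |f_\omega'|^2\bigr).
\]
By the stopping-time choice $|q|\cdot|f_\omega'| \asymp 1$, so the $O$-term is uniformly bounded and may be absorbed into a smooth weight. The resulting sum then reads as a twisted transfer-operator iterate evaluated on the constant function, with twist parameter living in the direction component $q/|q|$. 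The uniform spectral gap at that twist parameter contributes a factor $\rho^{|\omega|}$ with $\rho<1$ independent of $|q|$ and of $\nu$.

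Third, since every $\omega \in \Omega_q$ has combinatorial length of order $\log|q|$, the exponential decay $\rho^{|\omega|}$ converts into polynomial decay $|q|^{-\alpha}$ for an explicit $\alpha > 0$ depending on $\rho$ and on the contraction ratios of $\Phi$. The main obstacle is in the second step: without the Federer property, one must verify that the disintegration plus linearization over cylinders of varying combinatorial depth (but of uniform physical scale) still assemble into a single spectral-gap iterate — at worst with a polylogarithmic loss that affects only the constant $C(\nu)$ and not the exponent. A careful dyadic stratification of $\Omega_q$ by combinatorial depth, combined with summation of the resulting geometric series over strata, should close this gap; this is the refinement of Oh--Winter alluded to in the abstract.
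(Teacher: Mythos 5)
Your proposal correctly identifies the high‑level architecture — linearize over a stopping‑time section at scale $\asymp |q|^{-1}$, and feed the spectral gap for the derivative‑cocycle transfer operator into the estimate — but it misdescribes the mechanism by which the spectral gap enters, and this is a genuine gap, not merely a missing detail.

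The central problem is in your second and third steps. After linearizing, the sum you obtain is
\[
\sum_{\omega\in\Omega_q} p_\omega\, e^{2\pi i \langle q, f_\omega(x_\omega)\rangle}\,\mathcal{F}_{(f_\omega'(x_\omega))^{\!\top} q}(\nu)+\text{(small error)},
\]
and you assert this "reads as a twisted transfer‑operator iterate evaluated on the constant function" and hence picks up a factor $\alpha^{|\omega|}$ from the spectral gap. This does not work. The operator $P_{s,\ell}$ of Definition~\ref{Def transfer operator} is twisted by the \emph{derivative cocycle} (norm part $c(a,x)$ and angle part $\theta(a,x)$); the positional phase $e^{2\pi i\langle q, f_\omega(x_\omega)\rangle}$ is not a cocycle and cannot be absorbed into any frequency parameter $(s,\ell)$. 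Moreover the antichain $\Omega_q$ contains words of widely varying combinatorial length, so there is no single power $n$ to which one could apply \eqref{Eq spectral gap}; your final paragraph acknowledges this and suggests stratifying by depth, but geometric summation over strata does not reproduce the polynomial exponent — without more structure it costs a polynomial, not a polylogarithm. What the paper actually does is pass through two nontrivial intermediate stages: the spectral gap yields a renewal theorem with \emph{exponential} speed (Proposition~\ref{Proposition renewal}), which in turn yields an exponential‑rate equidistribution statement for the stopped pair $(A_{\tau_k}, S_{\tau_k}-k)$ conditioned on $\sigma^{\tau_k}$ (Theorem~\ref{Theorem equi}). Only after replacing the conditional expectation by the limiting absolutely continuous law (with exponentially small error in $k$) does one arrive at an averaged oscillatory integral $\int\int|\mathcal{F}_{e^{(-y-k)+ix}}\circ\cdots(\nu)|^2\,dx\,dy$, which is then handled by Hochman's lemma; this is also where Cauchy--Schwarz (producing $|\mathcal{F}_q(\nu)|^2$) is applied, a step your proposal omits.

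You also misattribute where the paper's "refinement of Oh--Winter" lives. It is not the use of a stopping‑time section — stopping times at a fixed physical scale are already present in \cite{algom2020decay} and do nothing to circumvent lack of separation or the Federer property. The refinement is the disintegration of $\nu$ into random measures $\mu_\omega$ (the model, Theorem~\ref{Theorem disint}), each of which \emph{individually} enjoys separation, doubling, UNI and non‑concentration, together with the corresponding disintegration of the transfer operator into pieces $P_{s,\ell,\omega,N}$. This is used entirely inside the proof of the spectral gap (Lemma~\ref{Lemma 5.4 Naud}), where the Dolgopyat‑type $L^2$ argument genuinely requires a doubling measure and separated cylinders, and is invisible from the passage "spectral gap $\Rightarrow$ Fourier decay" that your proposal addresses.
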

Before putting Theorem \ref{Main Theorem analytic} in the context of recent research, let us first discuss its various assumptions.
\begin{enumerate}
\item  Regarding the domain,  recall that such IFSs may be defined more generally on the closure of an open set, whereas we restrict ourselves to IFSs defined on  closed discs. This is  done for technical convenience as the proof is already quite involved even in this case. It is likely that by combining our technique with that of Leclerc \cite{Leclerc2023julia} (specifically, his use of the Koebe $1/4$-Theorem) one can significantly relax this assumption, at least to the closure of a simply connected domain. This seems to be the minimal requirement since at several points in the proof we require the existence of branches of  $\log f'$ for certain diffeomorphisms $f$, e.g. in Claim \ref{Claim UNI}.

\item The assumption that $\Phi$ is not conjugate to  self-similar is formally necessary since the conjugating map may be affine, in which case the result might be false, see \cite{rapaport2021rajchman}. Nonetheless, in dimension $1$ it is known that a strictly convex $C^2$ image of a self-similar measure always has polynomial decay (this was proved in increasing generality by Kaufman \cite{Kaufman1984ber}, Mosquera-Shmerkin \cite{Shmerkin2018mos}, and in the  general case independently by Algom-Chang-Wu-Wu \cite{Algom2023Wu} and Baker-Banaji \cite{baker2024polynomial}). Thus, in dimension $1$ it is known that just having one non-affine map in a $C^\omega$ IFS suffices for polynomial decay \cite{algom2023polynomial, baker2024polynomial, Algom2023Wu}. In the plane, the only currently known version of Kaufman's Theorem \cite{Kaufman1984ber} is for homogeneous (equicontractive) self-similar IFSs, due to Mosquera-Olivo \cite{Mos2023fourier}. If this result could be extended to all irreducible self-similar measures (i.e. those that don't live on hyperplanes) then we can replace assumption (1) in Theorem \ref{Main Theorem analytic} with the assumption that $\Phi$ contains a non-affine map, similarly to the one dimensional case. In fact, after we completed this paper we became aware of an on-going project by Banaji and Yu \cite{Banaji2024Han} that aims to study such higher dimensional versions of Kaufman's Theorem.

\item The assumption that $K_\Phi$ is not a subset of an analytic planar curve is also formally necessary, since the conclusion of Theorem \ref{Main Theorem analytic} clearly fails if $K_\Phi$ belongs to a hyperplane. However, in the presence of some curvature the situation becomes much more interesting. Here, following the initial work of Orponen \cite{Orponen2023add}, there have been several papers \cite{Dasu2024demeter, demeter2024szemer, orponen2024fourier}  that prove quantitative Fourier decay estimates \textit{on average} for (not necessarily dynamically defined) measures supported on such curves; however, we are not aware of  such  examples  where  \textit{pointwise} estimates have been established.

\end{enumerate}

Let us now place Theorem \ref{Main Theorem analytic} within current literature. It is a planar counterpart of recent results obtained simultaneously by the authors and Baker-Sahlsten \cite{algom2023polynomial, Baker2023Sahl} for IFSs on the line. Prior work on the Fourier decay problem had always assumed some additional algebraic, dynamical, or fractal geometric assumptions, and \cite{algom2023polynomial, Baker2023Sahl} were essentially the first results to achieve polynomial decay (and spectral gap that we discuss in the next section) in a stripped down axiomatic setting. Previous related results include the work of Jordan-Sahlsten \cite{Sahl2016Jor} about invariant measures for the Gauss map;  Bourgain-Dyatlov \cite{Bour2017dya} and Khalil \cite{khalil2023exponential} for some Patterson-Sullivan measures;  the various works by Li \cite{li2018fourier, Li2018decay} about Furstenberg measures; the work of Sahlsten-Stevens \cite[Theorem 1.1]{sahlsten2020fourier} about a class of stationary measures with respect to non-linear $C^\omega (\mathbb{R})$ IFSs with strong separation (that is,  the union \eqref{Eq union} is disjoint); and,  it improved our previous result \cite[Theorem 1.1]{algom2021decay} by improving the decay rate  from logarithmic to polynomial. We do note that \cite{algom2023polynomial, Baker2023Sahl} say nothing about self-similar measures, which is an important case. For recent progress on decay for self-similar measures we refer to \cite{Dai2007Feng, Solomyak2021ssdecay, varju2020fourier,   algom2020decay, Shmerkin2018mos, Buf2014Sol,  Dai2012ber, streck2023absolute, rapaport2021rajchman} and references therein.

Theorem \ref{Main Theorem analytic} also has counterparts in higher dimensions. With the exception of the very recent work of Baker-Khalil-Sahlsten \cite{khalil2024polynomial}, these results all prove polynomial decay in various more specialized algebraic or geometric settings. We remark that a version Theorem \ref{Main Theorem analytic} for Gibbs measures projected to the attractor assuming only a positive Lyapunov exponent  would give a unified proof of many of these results; this is left for future research. Now, Li-Naud-Pan \cite{Li2021Naud} extended the results of Bourgain-Dyatlov \cite{Bour2017dya} for limits of general Kleinian Schottky groups. This was later further generalized by Baker-Khalil-Sahlsten \cite{khalil2024polynomial}, relaying only on additive methods rather than on sum-product estimates. Leclerc, building on the methods of Sahlsten-Stevens and Oh-Winter \cite{Oh2017winter}, proved polynomial decay for Juila sets for hyperbolic rational maps of $\hat{ \mathbb{C}}$. We note that Theorem \ref{Main Theorem analytic} has non-trivial yet non-full overlap with virtually all of these results; and, if the Gibbs-version previously alluded to can be established then it would formally cover all of them. Very recently, Baker-Khalil-Sahlsten \cite[Theorems 1.14 and 1.17]{khalil2024polynomial} obtained polynomial decay for certain stationary measures on self-conformal IFSs, also in higher dimensions. Some of their assumptions are less restrictive than ours, e.g. they only ask that the IFS be $C^2$ and can treat the more general class of Gibbs measures; but some are more restrictive, e.g. they require the underlying IFS to be strongly separated and that the measures be affinely non-concentrated (see e.g. \cite{Urbanski2005dio}). The paper \cite{khalil2024polynomial} also contains results regarding Fourier decay for carpet like non-conformal IFSs; these seem to have no formal relations with Theorem \ref{Main Theorem analytic}.

Finally, we briefly survey some known results about Fourier decay for self-similar and self-affine measures in higher dimensions. This includes the work of Li-Sahlsten \cite{Li2020Sahl} and Solomyak \cite{solomyak2021fourier} for self-affine measures, the work of Rapaport \cite{rapaport2021rajchman} classifying those self-similar measures that are Rajchman, and the work of Varj\'{u} about poly-logarithmic decay of self-similar measures discussed in \cite{sahlsten2023fourier}. Baker-Khalil-Sahlsten \cite{khalil2024polynomial} recently obtained new examples of self-similar measures with poly-logarithmic decay assuming certain Diophantine conditions.

\subsection{Spectral gap via a cocycle version of a technique of Oh-Winter, and the $C^2$ case} \label{Section method}
The key step towards  Theorem \ref{Main Theorem analytic} is the proof of a spectral gap type estimate for the derivative cocycle. This result, which is the main technical result of our paper, is of  intrinsic interest due to the many applications it has. In our case, we will use it to show that certain random walks that arise by re-normalizing the derivative cocycle equidistribute exponentially fast towards an absolutely continuous measure (Theorem \ref{Theorem equi}); this  is the key step towards Theorem \ref{Main Theorem analytic}. More precisely, our spectral gap estimate (Theorem \ref{Theorem spectral gap}) implies a renewal theorem with exponential speed (Proposition \ref{Proposition renewal}) in the spirit of Li \cite{li2018fourier}, and it is this result that leads to the desired equidistribution result Theorem \ref{Theorem equi}. The proof of Fourier decay given Theorem \ref{Theorem equi} is similar to \cite{algom2020decay, algom2023polynomial}; and the proof of Theorem \ref{Theorem equi} given the renewal Theorem with exponential speed is similar to e.g. the work of Li-Sahlsten \cite{Li2020Sahl} or to our work \cite{algom2023polynomial}. So, we focus our attention in this sketch on the proof of the spectral gap, which is our chief innovation.

Our  strategy is adapted, in general terms, from our one dimensional paper \cite{algom2023polynomial}. However, there are deep changes due to the higher dimensional setup. Thus, it would be instructive to first briefly recall our method from \cite{algom2023polynomial}.

So, assuming say condition (1) of Theorem \ref{Main Theorem analytic} holds for a one dimensional real-analytic IFS, we aimed to prove spectral gap (Theorem \ref{Theorem spectral gap}) for the transfer operator defined by the norm derivative cocycle and $\mathbf{p}$ (\eqref{The der cocycle} and Definition \ref{Def transfer operator}). This is based on Dolgopyat's method \cite{Dol1998annals, Dolgopyat2000Mix2} as originally brought into the setting of self-conformal sets by Naud \cite{Naud2005exp}, see also \cite{Stoyanov2011spectra, Stoyanov2001decay, Araujo2016Melbourne, Avila2006yoccoz, Baladi2005Valle}. However, given our stripped down setting, we were met with three main obstacles: the union \eqref{Eq union} may not correspond to a Markov partition; the doubling (Federer) property might not be satisfied by  the measure $\nu$; and that $\Phi$ must the satisfy Dolgopyat's (UNI) condition, see Claim \ref{Claim UNI}). We note that condition (1) suffices to ensure that (UNI) does indeed hold, which follows from our discussion in \cite{algom2021decay}. The other obstructions are, however, much more substantial.

To get around them, we introduced  a cocycle version of Dolgopyat's method: we wrote $\nu$ as an integral over a certain family of random measures, and re-wrote the transfer operator as a corresponding weighted sum over smaller parts of itself. This was based upon a technique from \cite{Algom2022Baker}, though it had been used in several previous papers with various goal in mind \cite{Galcier2016Lq, Antti2018orponen, Shmerkin2018Solomyak, solomyak2023absolute}. Here, these  measures exhibit some stationary behaviour,  have strong separation in an appropriate sense, and have the doubling  property. The (UNI) condition was satisfied by all the different pieces of the transfer operator in this decomposition, which was critically important. We were thus able to run Naud's version of Dolgopyat's method by "jumping" between the various conditional measures (hence the term "cocycle").

In the present paper a major departure from \cite{algom2023polynomial} is that now the derivative cocycle comprises of the norm cocycle \eqref{The der cocycle} (as before) but also of an angle cocycle \eqref{The angle cocycle}. In particular, the transfer operator, in Definition \ref{Def transfer operator}, also takes the angle into account, and "couples" the two cocycles.  To prove spectral gap we rely on the  analysis of Oh-Winter  \cite{Oh2017winter} that was done for certain Julia sets. However, we are again faced with the lack of a Markov partition and the lack of the Federer property for $\nu$. Furthermore, we also require what Oh-Winter refer to as the NCP  (non-concentration property), see e.g. Theorem \ref{Theorem disint} Part (6), which is a type of UNI but for the angle cocycle. It is not hard to show that the set $K_\Phi$ satisfies this condition due to assumption (2) in Theorem \ref{Main Theorem analytic}. 

To get around these issues we construct conditional measures and disintegrate the transfer operator accordingly. As the NCP  for the angle cocycle plays a crucial role for the spectral gap argument, we must preserve it into every corresponding part of the transfer operator, while not losing separation or (UNI), which is non-trivial. The construction of these random measures and pieces  takes place in the first few subsections of Section \ref{Section spectral gap}. After a standard $L^2$ reduction, the spectral gap result is reduced to the construction of certain Dolgopyat operators (Lemma \ref{Lemma 5.4 Naud}). Its proof,  in Section \ref{Section proof}, which is the heart of the  technical work in this paper, is a cocycle version of a method of Oh-Winter \cite[Sections 5.3 - 5.5]{Oh2017winter}.

Finally, let us discuss our $C^\omega$ assumption compared with the $C^2$ assumptions we make in \cite{algom2023polynomial}. The issue is that to run the Oh-Winter proof scheme, that is, to obtain spectral gap, one requires something a-priori stronger than UNI. Namely, that a certain coupling of the norm and the angle cocycles defines a local diffeomorphism from small balls in $D$ to $\mathbb{R}\times \mathbb{T}$; and, that moreover the $C^2$ norm of this map can be made uniform across the disc. This is Lemma \ref{Lemma 3.2 Oh}, which critically uses the Cauchy-Riemann equations to derive information about the angle from information about the norm. While we believe the general outcome of Theorem \ref{Main Theorem analytic} should follow if one assumes the outcome of this Lemma in the $C^2$ case, we do not see how to deduce it simply from a non-linearity condition as in the $1$-dimensional case.

\section{Spectral gap} \label{Section spectral gap}
\subsection{Some notations and conventions} 
Throughout this paper we denote  by $C^{\omega} (D)$ the family of holomorphic functions on  $D$, where $D$ is  the closure of the unit disc. Often, we will work with real valued functions, in which case the gradient of $h:\mathbb{C}=\mathbb{R}^2 \rightarrow \mathbb{R}$ will be denoted by $\nabla h :\mathbb{C}=\mathbb{R}^2 \rightarrow \mathbb{R}$. The Euclidean norm on $\mathbb{R}^2$ will be denoted by $|\cdot |$. We remark that occasionally we will consider complex valued functions that may not be holomorphic. For these functions, we use the notation $\nabla h$ to mean the Jacobian of $h$ as a map $\mathbb{R}^2 \rightarrow \mathbb{R}^2$, and $|\nabla h|$ denotes the operator norm of the matrix $\nabla h$. 

Finally, in several places in the proof we will require the following version of the mean value Theorem: For every $g\in C^\omega (D)$ and $z,w\in D$ there exists some $v\in D$ such that
\begin{equation} \label{MVT}
\left| g(z)-g(w) \right| \leq |g'(v)|\cdot |z-w|.
\end{equation}

\subsection{Preliminaries} \label{Section induced IFS}
Let $\Phi = \lbrace f_1,...,f_n \rbrace$ be a $C^{\omega} (D)$ IFS, and recall that $\mathcal{A}= \lbrace 1,...,n\rbrace$. In particular,
$$f_i(D) \subseteq D,\quad \forall f_i \in \Phi.$$
We write
$$\rho_{\min} := \min_{i\in \mathcal{A}, x\in D} |f_i ' (x)|.$$
There exists some $L=L(\Phi)$ such that 
\begin{equation} \label{Eq bdd distortion}
L^{-1} \leq \left|f_\eta ' (x)\right|/\left|f_\eta ' (y)\right|\leq L \text{ for all } x,y\in D \text{ and every } \eta \in \mathcal{A}^*.
\end{equation}
This is called the bounded distortion property, see e.g. \cite[Lemma 2.1]{algom2020decay}.

Next, arguing similarly to  \cite[equation (5)]{algom2023polynomial} and using  the Cauchy-Riemann equations,   we may assume that for some $\tilde{C}>0$
\begin{equation} \label{Eq tilde C}
\sup_{x\in D, \xi\in \mathcal{A}^\mathbb{N}, n\in \mathbb{N}}  \left| \nabla \left( \log \left| f' _{\xi|_n} \right| \right)(x) \right| \leq \tilde{C}.
\end{equation}

For $N \in \mathbb{N}$ we define the IFS $\Phi^N$ by
$$\Phi^N := \lbrace f_{\eta}:\, \eta\in \mathcal{A}^N\rbrace.$$
We call $\Phi^N$ the $N$-generation induced IFS. Note that the same $L>0$ from \eqref{Eq bdd distortion} works equally as well for any induced IFS of any generation.

The following Claim will be instrumental when we construct the disintegration in Section \ref{Section dis}.
\begin{Claim} \label{Claim key}
Let $\Phi$ be a $C^\omega (D)$ IFS satisfying the assumptions of Theorem \ref{Main Theorem analytic}. Then there is an induced IFS $\Phi^N, N\in \mathbb{N}$ and four maps $f_1,f_2,f_3,f_4 \in \Phi^N$  such that:
\begin{enumerate}
\item Let $k\geq 5$. Then there is some $i\in \lbrace 1,3\rbrace$ such that 
$$f_i (D) \cup  f_{i+1} (D) \cup f_k (D) \text{ is a disjoint union.}$$
In addition, $f_i (D) \cup  f_{i+1} (D)$ is also a disjoint union for $i=1,3$.

\item There are  $m',m>0$ satisfying: For both $i=1,3$ and every $x\in D$, 
$$m\leq \left|\nabla  \left(  \log \left| f_{i } ' \right| - \log \left| f_{i+1} ' \right| \right) \left(x\right) \right|\leq m'.$$

\item The following estimate holds:
$$m-2\cdot \tilde{C}\cdot  \left( \sup_{i} ||f_i '||_\infty \right) ^N >0.$$
\end{enumerate}
\end{Claim}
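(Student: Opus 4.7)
The plan is to combine a UNI pair for the norm cocycle (from assumption~(1) of Theorem~\ref{Main Theorem analytic}) with two disjoint, well-separated prefix cylinders (from infiniteness of $K_\Phi$). By the non-conjugacy hypothesis and the argument of~\cite{algom2021decay} adapted to the plane, there exist $\ell\in\mathbb N$, a pair $(\alpha,\beta)\in\mathcal A^\ell$, and a constant $m_0>0$ with
\begin{equation*}
\bigl|\nabla(\log|f_\alpha'|-\log|f_\beta'|)(x)\bigr|\ge m_0 \qquad \forall\,x\in D.
\end{equation*}
Write $\rho:=\sup_{g\in\Phi}\|g'\|_\infty<1$. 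Since UNI pairs can be produced with $\ell$ arbitrarily large while retaining a uniform $m_0$, we take $\ell$ so that $2\tilde C\rho^\ell<m_0/2$; by a standard refinement (pre-composing with distinct long cylinders targeting different points of $K_\Phi$, with controlled effect on the UNI term) we also arrange $f_\alpha(D)\cap f_\beta(D)=\emptyset$.

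Next, using infiniteness of $K_\Phi$, pick distinct $x,y\in K_\Phi$ with $d:=|x-y|>0$, and let $\omega^x,\omega^y$ be codings of $x,y$. For $N\ge\ell$ set $\sigma_1:=\omega^x|_{N-\ell}$, $\sigma_2:=\omega^y|_{N-\ell}$. By bounded distortion, $\diam(f_{\sigma_j}(D))<d/4$ for $N$ large, so $f_{\sigma_1}(D)$ and $f_{\sigma_2}(D)$ are disjoint and at distance $\ge d/2$. Define
\begin{equation*}
f_1:=f_{\sigma_1\alpha},\ \ f_2:=f_{\sigma_1\beta},\ \ f_3:=f_{\sigma_2\alpha},\ \ f_4:=f_{\sigma_2\beta}\in\Phi^N.
\end{equation*}

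Property~(1) is immediate: each pair's images lie in the corresponding prefix cylinder, the two prefix cylinders are disjoint, and within each pair $f_i(D)\cap f_{i+1}(D)=f_\sigma(f_\alpha(D)\cap f_\beta(D))=\emptyset$ by injectivity (where $\sigma\in\{\sigma_1,\sigma_2\}$ as appropriate); any other $f_k\in\Phi^N$ has $\diam(f_k(D))<d/2$ for $N$ large, hence meets at most one of $f_{\sigma_1}(D),f_{\sigma_2}(D)$. For Property~(2), the chain rule yields, for $j\in\{1,2\}$,
\begin{equation*}
\nabla(\log|f_{\sigma_j\alpha}'|-\log|f_{\sigma_j\beta}'|)(x)=\nabla(\log|f_\alpha'|-\log|f_\beta'|)(x)+E_j(x),
\end{equation*}
where $E_j(x):=Jf_\alpha(x)^\top\nabla\log|f_{\sigma_j}'|(f_\alpha(x))-Jf_\beta(x)^\top\nabla\log|f_{\sigma_j}'|(f_\beta(x))$ satisfies $|E_j(x)|\le 2\tilde C\rho^\ell<m_0/2$ by~\eqref{Eq tilde C} together with $|Jf_\alpha|,|Jf_\beta|\le\rho^\ell$. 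Hence the gradient norm lies in $[m_0/2,\,4\tilde C]$, giving $m:=m_0/2$ and $m':=4\tilde C$. For Property~(3), $(\sup_i\|f_i'\|_\infty)^N\le\rho^{N^2}\to 0$ while $m=m_0/2$ is fixed, so a sufficiently large $N$ yields $m>2\tilde C(\sup_i\|f_i'\|_\infty)^N$.

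The principal technical obstacle is Step~1: extracting a UNI pair $(\alpha,\beta)$ that simultaneously satisfies the pointwise gradient lower bound on $D$ and has disjoint cylinder images. While UNI itself follows from the non-conjugacy assumption via~\cite{algom2021decay}, refining the pair to also have disjoint images while keeping the UNI constant essentially unchanged requires some care, drawing on the $C^\omega$ regularity together with the infinite structure of the attractor.
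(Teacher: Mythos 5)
Your construction, once you are handed a UNI pair $(\alpha,\beta)$ with $f_\alpha(D)\cap f_\beta(D)=\emptyset$, is clean and does deliver all three properties: the outer prefix composition keeps the UNI constant stable up to an $O(\rho^\ell)$ error, the disjointness of the two long prefix cylinders $f_{\sigma_1}(D),f_{\sigma_2}(D)$ plus a diameter count handles Property~(1), and fixing $m=m_0/2$ while $\rho^N\to 0$ handles Property~(3). (A small misreading: in Property~(3) the quantity $\bigl(\sup_i\|f_i'\|_\infty\bigr)^N$ is taken with $f_i\in\Phi$, so it equals $\rho^N$, not $\rho^{N^2}$; this is harmless since either tends to zero.) But you have deferred, rather than addressed, the part of the proof that the paper regards as the real content.

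The paper structures the argument differently: it first isolates and proves a standalone statement (Claim~\ref{Claim UNI}) asserting the existence, for every large $n$, of length-$n$ words $\xi,\zeta$ with a pointwise lower bound on $\bigl|\nabla(\log|f_\xi'|-\log|f_\zeta'|)\bigr|$ uniformly on $D$, and only then cites the one-dimensional paper for the combinatorial assembly into four maps. The proof of Claim~\ref{Claim UNI} is explicitly flagged as the place where ``there are some non-trivial new subtleties,'' and it is genuinely two-dimensional: it works with branches of the complex $\log f'$, introduces the auxiliary holomorphic functions $\varphi_\xi$, observes that $\Re\bigl((\varphi_\xi-\varphi_\zeta)'\bigr)\cdot\Im\bigl((\varphi_\xi-\varphi_\zeta)'\bigr)=\tfrac12\Im\bigl(((\varphi_\xi-\varphi_\zeta)')^2\bigr)$ is harmonic, and uses the hypothesis that $K_\Phi$ is not contained in an analytic curve to rule out $K$ lying in a level set of that harmonic function, before falling back on non-conjugacy to self-similarity. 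Your proposal attributes the UNI pair solely to assumption~(1) and cites ``the argument of~\cite{algom2021decay} adapted to the plane'' as if the adaptation were routine. It is not routine; it is precisely what the paper proves, and it also draws on assumption~(2), which your sketch never invokes. This is the principal gap.

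A secondary gap is the one you yourself flag at the end: you need the UNI pair to have disjoint cylinder images while retaining the uniform lower bound, and you say this ``requires some care'' without supplying it. One workable route is to start from a UNI pair $(\alpha,\beta)$ from Claim~\ref{Claim UNI}, use that $f_\alpha\neq f_\beta$ (holomorphic, so they disagree off a discrete set), pick $z\in K$ where $f_\alpha(z)\neq f_\beta(z)$, and append a fixed long suffix $\gamma$ targeting $z$ so that $f_\alpha(f_\gamma(D))$ and $f_\beta(f_\gamma(D))$ are disjoint; the inner composition rescales the UNI bound by $\rho_{\min}^{|\gamma|}$, but $\gamma$ has fixed length, so a lower bound $m_0$ survives and your subsequent estimates go through. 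This is doable, but as written your proof leaves both the planar UNI extraction and this disjointness refinement as acknowledged but unfilled holes, which together constitute essentially all of the paper's actual work on this Claim.
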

We note that the assertion about the upper bound in Part (2) is well known and follows from e.g. \eqref{Eq tilde C}.

Claim \ref{Claim key} is an analogue of \cite[Claim 2.1]{algom2023polynomial}. Their proofs are similar, except for the following Claim which requires special attention due to our higher dimensional setting. 
\begin{Claim} \label{Claim UNI}
We can find $c,m',N_0>0$ so that for every $n>N_0$ there are $\xi,\zeta \in \mathcal{A}^n$  such that for  all $x\in D$,
$$c< \left|\nabla  \left(  \log \left| f_{\xi|_n }  ' \right| - \log \left| f_{\zeta|_n }   ' \right| \right) \left(x\right) \right|\leq m'.$$
\end{Claim}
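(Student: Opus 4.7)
The upper bound $m'$ is immediate from \eqref{Eq tilde C}, so the work lies in the lower bound. The plan is to reduce, via the Cauchy-Riemann equations, to a one-dimensional complex-analytic non-linearity estimate, to which the argument of \cite{algom2021decay} then applies essentially verbatim.

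Since $D$ is simply connected and each $f_\eta$ is holomorphic with non-vanishing derivative, one may fix a single-valued branch of $\log f_\eta'$ on $D$ and define the non-linearity $\phi_\eta(z) := (\log f_\eta')'(z) = f_\eta''(z)/f_\eta'(z)$, a holomorphic function on $D$. Applying the Cauchy-Riemann equations to $\log f_\eta' = \log|f_\eta'| + i \arg(f_\eta')$ yields
\[
\left|\nabla \log|f_\eta'|(z)\right| = |\phi_\eta(z)|, \qquad z \in D,
\]
so the desired lower bound reduces to finding $\xi, \zeta \in \mathcal{A}^n$ with $|\phi_{\xi|_n}(z) - \phi_{\zeta|_n}(z)| > c$ for every $z \in D$. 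A direct chain-rule computation yields the cocycle identity $\phi_{\tau i}(z) = \phi_\tau(f_i(z)) f_i'(z) + \phi_i(z)$ for every $\tau \in \mathcal{A}^*$ and $i \in \mathcal{A}$.

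The heart of the argument is a rigidity step adapted from the one-dimensional proof in \cite{algom2021decay}. Arguing by contradiction, suppose the lower bound fails for every $c > 0$. Since the family $\{\phi_\eta\}_{\eta \in \mathcal{A}^*}$ is uniformly bounded on $D$ by $\tilde{C}$ (from \eqref{Eq tilde C}), it is normal by Montel's theorem. A diagonal/subsequence argument, using the cocycle identity to propagate the near-coincidence across word lengths, extracts a holomorphic function $\Psi$ on $D$ that serves as a joint limit of the family and satisfies the coboundary equation
\[
\phi_i(z) = \Psi(z) - \Psi(f_i(z)) f_i'(z), \qquad i \in \mathcal{A}, \ z \in D.
\]
Since $\Psi$ is holomorphic on the simply connected domain $D$, one integrates to obtain a holomorphic $h$ on $D$ with $h'$ non-vanishing and $h''/h' = \Psi$. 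A direct chain-rule computation then gives
\[
\frac{(h \circ f_i)''}{(h \circ f_i)'}(z) = \phi_i(z) + \Psi(f_i(z)) f_i'(z) = \Psi(z) = \frac{h''}{h'}(z),
\]
which forces $h \circ f_i = a_i h + b_i$ for constants $a_i, b_i \in \mathbb{C}$. Hence $h \circ f_i \circ h^{-1}$ is affine for every $i$, so $\Phi$ is conjugate to a self-similar IFS, contradicting assumption (1) of Theorem \ref{Main Theorem analytic}.

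The main obstacle is the propagation step above: upgrading the hypothesized pointwise near-coincidence of the differences $|\phi_{\xi|_n} - \phi_{\zeta|_n}|$ on $D$ into an honest coboundary identity for the limit $\Psi$. Here holomorphy is used essentially, both via the identity theorem (to propagate agreement from a dense set to all of $D$) and via simple connectedness (to integrate $\Psi$ globally to $h$). This is why the $C^\omega$ hypothesis cannot be immediately weakened, whereas the rest of the argument — bounded distortion, chain-rule algebra, and the final algebraic manipulation yielding the contradiction — is valid in greater generality.
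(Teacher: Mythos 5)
Your reduction via the Cauchy--Riemann equations to a lower bound on $|\phi_\xi - \phi_\zeta|$ with $\phi_\eta = f_\eta''/f_\eta'$ is correct, as is the cocycle identity and the final coboundary-to-conjugacy algebra; this is the same basic circle of ideas the paper uses. But the step you yourself flag as ``the main obstacle'' is genuinely where the proof lives, and it is missing. The negation of the claim is only that, for every $c>0$ and for arbitrarily large $n$, \emph{every} pair of length-$n$ words has \emph{some} point in $D$ (allowed to depend on the pair) at which the nonlinearity difference is $\leq c$. That is a far weaker statement than a coboundary identity $\phi_i = \Psi - (\Psi\circ f_i)f_i'$ holding on all of $D$, and Montel plus a diagonal argument does not straightforwardly upgrade one to the other. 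The paper avoids conditioning on this weak negation: for each infinite sequence $\xi$ it builds the normalized limit $\varphi_\xi(x) = \lim_n\bigl[\log f_{\bar\xi|_n}'(x) - \log f_{\bar\xi|_n}'(x_0)\bigr]$ (reversed composition, so the limit exists by bounded distortion) and establishes the unconditional dichotomy: either $(\varphi_\xi-\varphi_\zeta)'$ is nonzero at some $x_1\in K$ for some pair, or all the $\varphi_\xi$ agree, forcing conjugacy to a self-similar IFS via the iteration identity $\varphi_{a*\xi} = \log f_a' + \varphi_\xi\circ f_a + c_{a*\xi}$ and assumption (1).

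There is a second gap you do not address: even once one has a pair with $|\phi_\xi-\phi_\zeta|(x_1)>c'$ at a single point $x_1\in K$, the claim demands the lower bound at \emph{every} $x\in D$. The paper handles this by choosing a coding $\omega$ of $x_1$, pre-composing the truncated words with $f_{\omega|_k}$ for $k$ large enough that $f_{\omega|_k}(D)$ lands in a small neighborhood of $x_1$, and absorbing a factor of $\rho_{\min}^k$ into the constant $c$. Your sketch never mentions this transport step, yet without it one only controls the nonlinearity difference near $K$, not on all of $D$.

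Finally, a remark on hypotheses: the paper's proof invokes assumption (2) (that $K_\Phi$ is not contained in an analytic curve) through a harmonic-function argument. Your sketch uses only assumption (1). That omission is likely harmless here --- one can instead apply the identity theorem to the holomorphic function $(\varphi_\xi-\varphi_\zeta)'$, which vanishes on the infinite set $K$ and hence on $D$ --- but you should flag the substitution rather than silently drop a hypothesis that the paper's own proof relies on.
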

The proof is both based on and uses \cite[Claim 2.13]{algom2021decay}, which is its one dimensional counterpart. However, there are some non-trivial new subtleties, so we proceed with care. We remind the reader of our assumptions: $\Phi$ is not conjugate to a self-similar IFS, and $K_\Phi$ is not a subset of an analytic planar curve.
\begin{proof}
In this proof we work with  branches of the complex $\log f'$ defined on $D$ for certain diffeomorphisms $f$. They exists here since $D = \overline{B_1(0)}$ by our assumption.

Fix some $x_0 \in D$. For every $\xi \in \mathcal{A}^\mathbb{N}$ we define a function $f_\xi:D\rightarrow \mathbb{C}$ via 
$$\varphi_\xi = \lim_{n\rightarrow \infty} \log f_{\bar{\xi}|_n} '(x) - \log f_{\bar{\xi}|_n} '(x_0),\, \text{ where } \bar{\xi}:=(....\xi_{n-1},...,\xi_2,\xi_1)\in \mathcal{A}^{-\mathbb{N}}.$$ 
Just to be clear, $f_{\bar{\xi}|_n} = f_{\xi_{n}}\circ \dots \circ f_{\xi_1}$. Let us first discuss the properties of this function. Similarly to the argument in \cite[Claim 2.13]{algom2021decay}, it is readily shown that for every $\xi \in \mathcal{A}^\mathbb{N}$ and $a\in \mathcal{A}$,
\begin{equation} \label{Eq iterate}
\varphi_{a*\xi} = \log f_a ' + \varphi_\xi \circ f_a+c_{a*\xi}.
\end{equation}
Here, "$*$" denotes concatenation of words (or of a word with a sequence), and $c_{a*\xi} \in \mathbb{C}$ is some constant. We also note that $\varphi_\xi \in C^\omega (D)$ and that the association $\xi \mapsto \varphi_\xi$ is continuous. Finally, we remark for future use that the definition of $\varphi_\xi$ coincides with the definition given in \cite[Claim 2.13]{algom2021decay} when $\xi$ is periodic (up to the removing the bar from $\bar{\xi}$ in its definition).

We next claim that there exist some $\xi,\zeta \in \mathcal{A}^\mathbb{N}$ and some $x_1\in K$ such that
$$\Re( \left( \varphi_\xi- \varphi_\zeta \right)' )\cdot \Im( \left( \varphi_\xi- \varphi_\zeta \right)')(x_1)\neq 0.$$
Indeed, suppose towards a contradiction that this is not the case. Notice that $$x\mapsto \Re( \left( \varphi_\xi- \varphi_\zeta \right)' )\cdot \Im( \left( \varphi_\xi- \varphi_\zeta \right)')(x)$$
is an harmonic function, as it is equal to
$$\frac{1}{2} \Im( \left( \varphi_\xi- \varphi_\zeta \right)' )^2 )(x).$$
So, our assumption towards a contradiction implies that $K$ lies on a level set (corresponding to the value $0$) of this harmonic function. Since we are assuming that this is not the case, we conclude that for every $\xi,\zeta \in \mathcal{A}^\mathbb{N}$
$$ \Im( \left( \varphi_\xi- \varphi_\zeta \right)' )^2 )(x) \equiv 0 \text{ on } D.$$
This in turn implies that the holomorphic function $\left( \varphi_\xi- \varphi_\zeta \right)' )^2$ is a constant real number for every $\xi,\zeta \in \mathcal{A}^\mathbb{N}$. Thus, $\varphi_\xi- \varphi_\zeta$ is an affine map for every such pair of parameters.

Our next step is to show that this affine map is actually identically zero. Indeed, let $\xi,\zeta \in \mathcal{A}^\mathbb{N}$ and let $a\in \mathcal{A}$.  Applying \eqref{Eq iterate} twice, we see that
$$\varphi_{a*\xi} = \log f_a ' + \varphi_\xi \circ f_a+c_{a*\xi}, \text{ and}$$
$$\varphi_{a*\zeta} = \log f_a ' + \varphi_\zeta \circ f_a+c_{a*\zeta}.$$
So,
$$\varphi_{a*\xi} - \varphi_{a*\zeta} = \left( \varphi_\xi- \varphi_\zeta \right)\circ f_a+d_{\xi,\zeta,a},\,\text{ where }  d_{\xi,\zeta,a}\in \mathbb{C}.$$
Now, by our assumption there exists some $f_a \in \Phi$ that is not affine. Therefore, as  $\varphi_{a*\xi} - \varphi_{a*\zeta}$ is affine, $ \varphi_\xi- \varphi_\zeta$ must be constant. Since both maps have a common zero at $x_0$ we conclude that this constant is $0$. 

Thus, our assumption towards a contradiction has led us to conclude that for every $\xi,\zeta \in \mathcal{A}^\mathbb{N}$ we have
$$\varphi_\xi = \varphi_\zeta.$$
This is in particular true for $1$-periodic elements. Thus, we may now argue as in \cite[Claim 2.13]{algom2021decay} that  the IFS $\Phi$ is $C^\omega$ conjugate to a $C^\omega$ linear $\Psi$ IFS, i.e., one such that $g''=0$ on its attractor $K_\Psi$ for $g \in \Psi$. Since $\Psi$ is $C^\omega$ this forces $\Psi$ to be self-similar. We thus see that $\Phi$ is conjugate to self-similar, contradicting our assumptions.

So, we have shown that there exist some $\xi,\zeta \in \mathcal{A}^\mathbb{N}$ and some $x_1\in K$ such that
$$\Re( \left( \varphi_\xi- \varphi_\zeta \right)' )\cdot \Im( \left( \varphi_\xi- \varphi_\zeta \right)')(x_1)\neq 0.$$


Thus,  there are $c'>0$, $x_1\in K$ such that for all $n$ there are words  $\alpha=\bar{\xi}|_n, \beta= \bar{\zeta}|_n \in \mathcal{A}^n$  such that 
$$c'< \left| \nabla  \left(  \log \left| f_{\alpha }  ' \right| - \log \left|   f_{\beta }   ' \right| \right) \left(x_1\right) \right|.$$
Note that here we are taking the usual forward composition when considering $\alpha,\beta$. Let $\omega \in \mathcal{A}^\mathbb{N}$ be a coding of $x_1$, that is, $x_\omega = x_1$ (recall \eqref{Eq coding}). Note that we have uniform convergence  as $n\rightarrow \infty$   of the series
$$ \left|\nabla \left(  \log \left| f_{\bar{\xi}|_n }  ' \right| - \log \left|   f_{\bar{\zeta}|_n }   ' \right| \right) \left(\cdot \right) \right|.$$
Thus, for some $N_1$ and  $k=k(N_1,c')$ we have for every $n>N_1$ and for the corresponding words $\alpha,\beta$ of length $n$
$$\left|\nabla  \left(  \log \left| f_{\alpha }  ' \right| - \log \left|  f_{\beta }  ' \right| \right) \left( x_1 \right) \right| -\frac{c'}{2} \leq \left|\nabla  \left(  \log \left|  f_{\alpha }  ' \right| - \log \left| f_{\beta }  ' \right| \right) \left( f_{\omega|_k}(x)\right) \right| $$
Let $n>\max\lbrace  N_1,N_0 \rbrace$ and $k=k(N_1,c')$ be chosen as indicated above. Then for every $x\in D$,  for the corresponding words $\alpha,\beta$ of length $n$
$$\frac{c'}{2} \cdot  \rho_{\min} ^k \leq  \left|\nabla  \left(  \log \left|  f_{\alpha }  ' \right| - \log \left| f_{\beta }  ' \right| \right) \left( f_{\omega|_k}(x)\right) \right| \cdot \left| f_{\omega|_k}'(x) \right|$$
$$=  \left|\nabla  \left(  \log \left| \left( f_{\alpha } \circ f_{\omega|_k} \right) ' \right| - \log \left| \left(  f_{\beta } \circ f_{\omega|_k}  \right) ' \right| \right) \left(x\right) \right|.$$
Therefore,  $c=\frac{c'}{2} \cdot  \rho_{\min} ^k$,  $\xi' = \alpha*\omega|_{k}$, and $\zeta' = \beta *\omega|_{k}$, satisfy the assertion of the Lemma for all $n>\max\lbrace  N_1,N_0 \rbrace$ (recall that we don't have to worry about the upper bound). The proof is complete.
\end{proof}
$$ $$
From this point, the proof of Claim \ref{Claim key}  follows similarly to \cite[proof of Claim 2.1]{algom2023polynomial}. We omit the details.

 Let us now fix a strictly positive probability vector $\textbf{p}=(p_1,...,p_n)$  on $\mathcal{A}$, and as usual we consider the product measure $\mathbb{P}=\mathbf{p}^\mathbb{N}$  on $\mathcal{A}^\mathbb{N}$. We consider the self-conformal measure $\nu=\nu_\mathbf{p}$.

It is standard that for all $N\in \mathbb{N}$ we have $K_{\Phi^N} = K_\Phi$. That is, the original IFS shares the same attractor as all induced IFSs of any generation. Moreover, the measure $\nu$ is a self-conformal measure w.r.t $\Phi^N$ and the corresponding probability vector $\mathbf{p}^N$ on $\mathcal{A}^N$. Therefore, by Claim \ref{Claim key}, moving to an induced IFS $\Phi^N$ for a suitable $N$, we may assume that already in our IFS $\Phi$ there maps  $f_1,f_2,f_3,f_4 \in \Phi$ such that:

\begin{enumerate}
\item Let $k\geq 5$. Then there is some $i\in \lbrace 1,3\rbrace$ such that \begin{equation} \label{Add property 1}
f_i (D) \cup  f_{i+1} (D) \cup f_k (D) \text{ is a disjoint union.}
\end{equation}
In addition, $f_i (D) \cup  f_{i+1} (D)$ is also a disjoint union for $i=1,3$.

\item There are  $m',m>0$ satisfying: For both $i=1,3$ and every $x\in D$, 
\begin{equation} \label{Add property 2}
m\leq \left|\nabla  \left(  \log \left| f_{i } ' \right| - \log \left| f_{i+1} ' \right| \right) \left(x\right) \right|\leq m',\text{ and } m-2\cdot \tilde{C}\cdot \max_{i\in A} ||f'_i||_\infty>0.
\end{equation}
\end{enumerate}

We proceed to define  the various derivative cocycles and corresponding transfer operator. The (free) semigroup $G$ that is  generated by  $\{f_a: 1\leq a\leq n\}$ acts on $D$ by composition in the obvious way. We define the norm (derivative) cocycle $c:G\times D\rightarrow \mathbb{R}$ via
\begin{equation} \label{The der cocycle}
c(I,x)=-\log \left| f'_I (x) \right|,
\end{equation}
and the angle (derivative) cocycle $\theta:G\times D\rightarrow \mathbb{R}$ via  
\begin{equation} \label{The angle cocycle}
\theta(I,x)=\arg \left( f'_I (x) \right) \in \mathbb{T}.
\end{equation}
Thus for all $x\in D$ and $i\in \mathcal{A}$ we have
$$f_i '(x) = e^{ -c(i,x)+i\theta(i,x)}.$$

We can now define the transfer operator. Note that it differs from the one we use in \cite[Definition 2.3]{algom2023polynomial}, since the angle cocycle plays a role here as well. It is modelled after the operator used by Oh-Winter \cite[Section 2.3]{Oh2017winter}.
 \begin{Definition} \label{Def transfer operator}
 Let $s \in \mathbb{C}$ be such that $|\Re(s)|$ is sufficiently small. The transfer operator $P_{s}:C^1(D) \rightarrow C^1(D)$ is defined by:
 
For $g \in C^1(D)$, $x \in D$, and a character $\chi:S^1 \rightarrow S^1$,
$$P_{s,\chi} (g)(x) = \int e^{2\pi \cdot s\cdot  c(a,x)} \chi\left( e^{i\theta(a,x)} \right) g\circ f_a(x)\, d\mathbf{p}(a). $$
If the character has the form $\chi_\ell (z)=z^\ell, \ell\in \mathbb{Z},$ we sometimes denote
$$P_{s,\ell}:=P_{s,\chi_\ell}.$$
In particular, 
$$P_s:=P_{s,0}.$$
 \end{Definition}
Since $\nu$ is the (unique) stationary measure for the measure $\sum_{a\in \mathcal{A}} p_a \cdot \delta_{\lbrace f_a \rbrace}$ on $G$, the general discussion
about transfer operators as in \cite[Section 11.5]{Benoist2016Quint}  can be applied here.

Finally, we define 
$$\rho:= \sup_{f\in \Phi} ||f'||_\infty \in (0,1).$$
And,  by uniform contraction we have
\begin{equation} \label{Eq C and C prime}
0<D:= \min \lbrace -\log |f' (x)| : f\in \Phi, x\in D \rbrace, \quad D':= \max \lbrace -\log |f' (x)| : f\in \Phi, x\in D \rbrace < \infty.
\end{equation}

\subsection{A model (disintegration) construction} \label{Section dis} 
In this Section we construct the disintegration of our self-conformal measure $\nu$ for the IFS $\Phi$, and the transfer operator $P_{s,\ell}$. We follow the notations as in Section \ref{Section induced IFS}. This is based on the notion of a model, similarly to \cite{Shmerkin2016Solomyak}. Our treatment is a two dimensional version of that in \cite[Section 2.3]{algom2023polynomial}

Fix a finite set $I$  of $C^\omega (D)$ IFSs $\lbrace f_1^{(i)},...,f_{k_i} ^{(i)} \rbrace, i\in I$.  We define our parameter space as $\Omega = I^\mathbb{N}$. Given $\omega \in \Omega$ and $n\in \mathbb{N}\cup \lbrace \infty \rbrace$ we define
$$X_n ^{(\omega)} := \prod_{j=1} ^n \lbrace 1,...,k_{\omega_j} \rbrace.$$
We now define a relative coding map $\Pi_\omega : X_\infty ^{(\omega)} \rightarrow \mathbb{C}$ by
$$\Pi_\omega (u) = \lim_{n} f^{ (\omega_1)} _{u_1} \circ \dots \circ f^{ (\omega_n)} _{u_n} (x_0)\, \text{ for some } x_0\in D.$$
We proceed to put measures on these objects. Suppose that for every  $i\in I$ we are given a strictly positive probability vector $\mathbf{p}_i = (p_1 ^{(i)},...,p_{k_i} ^{(i)})$. Then for every $\omega \in \Omega$ we can define a product measure on  $X_\infty ^{(\omega)}$ via
$$\eta^{(\omega)} := \prod_{n=1} ^\infty \mathbf{p}^{(\omega_n)}.$$
Projecting these measures by the coding map, we define
$$\mu_\omega:= \Pi_\omega \left( \eta^{(\omega)} \right). $$
Consider the left shift $\sigma:\Omega \rightarrow \Omega$. Then we have a stochastic stationarity relation:
\begin{equation} \label{Eq stochastic self-similarity}
\mu_\omega = \sum_{j\in X_1 ^{(\omega)}} p_j ^{(\omega_1)}\cdot  f_j ^{(\omega_1)} \mu_{\sigma(\omega)}.
\end{equation}
For every $\omega \in \Omega$ we have corresponding attractors
$$K_\omega := \supp(\mu_\omega) = \Pi_\omega \left( X_\infty ^{(\omega)} \right).$$
Similarly to \eqref{Eq stochastic self-similarity} we have
$$K_\omega = \bigcup_{j\in  X_1 ^{(\omega)}}  f_j ^{(\omega_1)} K_{\sigma(\omega)}.$$

Let us now fix $\omega\in \Omega, N, \ell \in \mathbb{N}$,  and $s\in \mathbb{C}$. The operator $P_{s, \ell, \omega,N}:C^1\left(D\right)\rightarrow C^1\left(D\right)$ is defined by
\begin{equation} \label{Eq dis operator}
P_{s, \ell, \omega,N} \left( g \right) (x):= \sum_{J\in X_N ^{(\omega)}} \eta ^{(\omega)} (J) e^{2\pi s c(J,x)} \chi_\ell \left( e^{i \theta\left(J,x\right)}\right)  g\circ f_J (x), \, \text{ where we put } \eta ^{(\omega)} (J) := \prod_{n=1} ^{N} \textbf{p}^{(\omega_n)}_{  J_n }.
\end{equation}

Let us now put a selection measure $\mathbb{Q}$, which is by definition $\sigma$-invariant,  on $\Omega$. We call the triplet $\Sigma = (\Phi_{i\in I} ^{(i)}, (\mathbf{p}_i)_{i\in I}, \mathbb{Q})$ a model. The model is called non-trivial if the measures $\mu_\omega$ are non-atomic almost surely, and if $\mathbb{Q}$ is a Bernoulli measure  it is called Bernoulli. The following Model construction will play a key role in our analysis:
\begin{theorem} \label{Theorem disint}
Let $\Phi$ be a  $C^\omega (\mathbb{C})$ IFS satisfying \eqref{Add property 1} and \eqref{Add property 2}. In addition, assume $K_\Phi$ does not belong to an analytic curve. Then for any self-conformal measure $\nu=\nu_{\mathbf{p}}$ there is a non-trivial Bernoulli model $\Sigma=\Sigma(\Phi, \nu_{\mathbf{p}})$ so that:
\begin{enumerate}
\item We can disintegrate the measure $\nu$ as 
$$\nu = \int \mu_{\omega} d\mathbb{Q}(\omega).$$

\item  For all $N, \ell \in \mathbb{N},s\in \mathbb{C}, f\in C^1(D),$ and every $x\in D$ we can disintegrate $P_{s,\ell} ^{N}$ via
\begin{equation} \label{Dis of transfer}
P_{s,\ell} ^{N} \left(f \right)(x) = \sum_{\omega \in \Omega^{N}} \mathbb{Q}\left([\omega]\right) P_{s, \ell, \omega, N} \left( f \right)(x).
\end{equation}

\item  For all $j\in I$ we have $k_j \geq 2$ (the model branches non-trivially).

\item For all  $\omega \in \Omega$ 
$$\bigcup_{ j\in X_1 ^{(\omega)}} f_j ^{(\omega_1)} (D)$$
is a disjoint union (the model has separation).

\item   For some $m',m>0$ and $N_0\geq 0$ and every $N\geq N_0$, for all $\omega \in \Omega$ we can find $\alpha_1 ^N, \alpha_2 ^N \in X_N ^{(\omega)}$ satisfying
 
$m \leq \left|\nabla  \left(  \log \left| f_{\alpha_1 ^N } ' \right| - \log \left| f_{\alpha_2 ^N } ' \right| \right) \left(x\right) \right| \leq m', \quad \text{ for every } x\in D$. (UNI is preserved in all branches).

\item (Non concentration in all parts) There exists $\delta>0$ such that: for every $\omega \in \Omega$ and any  direction $w\in S^1$, for any word $\eta\in X_\omega ^*:= \bigcup_N X_N ^{(\omega)}$ and any $x\in f_\eta(K)\cap K_\omega$, there is some $y\in f_\eta(K)\cap K_\omega$ such that 
$$\left| \left\langle x-y,\,w \right\rangle \right| > \delta\cdot \left| f_\eta ' (0) \right|.$$

\item  For all $D>1$ we can find a constant $C_D=C_D(\Sigma)>0$ such that:

For all $\omega\in \Omega$ and $x\in \supp (\mu_\omega)$, for every $r>0$,
$$\mu_{\omega}  \left( B(x, D\cdot r) \right) \leq C_D  \mu_{\omega}  \left( B(x, r) \right).$$
That is, every $\mu_\omega$ satisfies the doubling (Federer) property.
\end{enumerate}
\end{theorem}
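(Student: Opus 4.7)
My plan is to build the Bernoulli model by decomposing $\Phi$ into sub-IFSs, each anchored by one of the UNI pairs $\{f_1,f_2\}$ or $\{f_3,f_4\}$ from \eqref{Add property 1}--\eqref{Add property 2}. For each $k\in\{5,\dots,n\}$, let $i(k)\in\{1,3\}$ be as in \eqref{Add property 1} and set $\Psi^{(k)}:=\{f_{i(k)},f_{i(k)+1},f_k\}$, which has pairwise disjoint images; for $k\in\{1,2\}$ set $\Psi^{(k)}:=\{f_1,f_2\}$, and for $k\in\{3,4\}$ set $\Psi^{(k)}:=\{f_3,f_4\}$. Take $I=\{1,\dots,n\}$, equip $\Omega=I^{\mathbb{N}}$ with the Bernoulli measure $\mathbb{Q}$ of single-site weights $(q_k)$, and attach strictly positive probability vectors $\mathbf{p}^{(k)}$ to each $\Psi^{(k)}$. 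Choose these weights so that the one-step consistency identity
\[
\sum_{k\in I}q_k\sum_{j\in\Psi^{(k)}}p^{(k)}_j\,\delta_{f^{(k)}_j} = \sum_{a\in\mathcal{A}}p_a\,\delta_{f_a}
\]
holds. This is a linear system with strictly positive solutions because the maps $f_1,\dots,f_4$ appear in several sub-IFSs and thus supply enough slack to accommodate the single-appearance maps $f_k$ for $k\geq 5$.

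Given this identity, parts (1) and (2) are formal. Integrating \eqref{Eq stochastic self-similarity} against $\mathbb{Q}$ and using $\sigma$-invariance shows that $\int\mu_\omega\,d\mathbb{Q}$ is $\Phi$-stationary, hence equals $\nu$; $N$-fold iteration of this identity produces \eqref{Dis of transfer}. Non-triviality of the model, together with parts (3) and (4), is immediate. For (5), pick $\alpha_1^N,\alpha_2^N\in X_N^{(\omega)}$ by selecting indices of the UNI pair inside $\Psi^{(\omega_j)}$ at each coordinate; the lower bound survives iteration thanks to the second clause of \eqref{Add property 2} combined with \eqref{Eq tilde C} via a telescoping chain rule, while the upper bound is immediate from \eqref{Eq tilde C}. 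Property (7) follows from strong separation \eqref{Add property 1} plus the uniform contraction and distortion \eqref{Eq bdd distortion}: the standard covering argument used for Federer self-conformal measures with SSC applies and yields a doubling constant depending only on $D$ and on the geometry, hence uniform in $\omega$.

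The main obstacle is (6), the non-concentration property. Since $f_\eta(K)\cap K_\omega\supseteq f_\eta\bigl(K_{\sigma^{|\eta|}(\omega)}\bigr)$ and $f_\eta$ is conformal with bounded distortion \eqref{Eq bdd distortion}, it suffices to establish a scale-one, base-point-independent NCP uniformly over the family $\{K_{\omega'}\}_{\omega'\in\Omega}$; the conformal transfer then yields the required $\delta|f_\eta'(0)|$ up to the distortion constant $L$. For the uniform scale-one NCP, I use compactness of $\Omega=I^{\mathbb{N}}$ together with continuity of $\omega'\mapsto K_{\omega'}$ in Hausdorff distance. If no uniform $\delta_0>0$ existed, a limit extraction along $(\omega'^{(n)},w^{(n)})\in\Omega\times S^1$ would produce an attractor $K_{\omega^*}$ lying on a line through one of its points; iterating the stochastic relation $K_{\omega^*}=\bigcup_j f_j^{(\omega_1^*)}K_{\sigma\omega^*}$ forces every $K_{\sigma^n\omega^*}$ to be curve-bound, and a harmonic-function argument in the spirit of the proof of Claim \ref{Claim UNI} converts this into a contradiction with the hypothesis that $K_\Phi$ does not lie on an analytic planar curve. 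Making the direction-uniform limit extraction and the harmonic-function transfer fully rigorous is the bulk of the technical work.
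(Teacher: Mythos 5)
Your proposal diverges from the paper in a way that creates a genuine gap in part (6). You define each branching alphabet as the \emph{minimal} collection $\Psi^{(k)}=\{f_{i(k)},f_{i(k)+1},f_k\}$ (respectively the UNI pair for $k\leq 4$). The paper instead enlarges each $\Psi_k$ to a set $\Phi_k$ of \emph{maximal} cardinality among subsets of $\Phi$ with pairwise-disjoint images of $D$ that contain $\Psi_k$; see \eqref{Eq max card}. This maximality is not cosmetic: it is the engine of Lemma~\ref{Lemma iterative}. There, one takes the witness $y\in f_\eta(K)$ for the NCP of the \emph{full} attractor (Lemma~\ref{Lemma delta}), and if $y$ falls in a child cylinder $f_{\eta*j}(K)$ with $j$ not available in the model at that level, maximality guarantees some $i\in\Phi_{\omega_{|\eta|+1}}$ with $f_i(D)\cap f_j(D)\neq\emptyset$, so that a point of $K_\omega$ lies within $2L\rho|f_\eta'(0)|$ of $y$. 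Taking $\delta=\delta'-2L\rho>0$ (after inducing) finishes the proof. With your minimal $\Psi^{(k)}$ there can exist $j\in\mathcal{A}$ whose cylinder is disjoint from the cylinders of \emph{all} three maps in $\Psi^{(\omega_{|\eta|+1})}$; then the witness $y$ has no nearby replacement in $K_\omega$, and the estimate fails.

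Your proposed fix — a compactness argument over $\Omega$ yielding a limit $\omega^*$ with $K_{\omega^*}$ on a line, then a harmonic-function contradiction — does not close this gap. The hypothesis is that $K_\Phi$ (the full attractor) does not lie on an analytic curve; a sub-attractor $K_{\omega^*}$, built from only two or three maps at each level, is a proper subset of $K$ and may perfectly well be curve-bound without contradicting that hypothesis. Nothing in your reduction forces $K$ itself onto a curve from the degeneracy of $K_{\omega^*}$: the stochastic self-similarity you iterate only constrains the sets $K_{\sigma^n\omega^*}$, which are again proper subsets. So the ``direction-uniform limit extraction and harmonic-function transfer'' you defer as technical work is in fact where the argument breaks down. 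The paper sidesteps this entirely: it proves NCP once for $K$ (Lemma~\ref{Lemma delta}), and then \emph{inherits} it for every $K_\omega$ using the maximality structure of $\Phi_k$ and the bound $\delta'-2L\rho>0$, after choosing the induction depth $N$ so that $\rho$ is small enough. You should replace $\Psi^{(k)}$ by the maximal $\Phi_k$ and adopt the two-case argument of Lemma~\ref{Lemma iterative}.

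A secondary remark: once you do pass to the maximal $\Phi_k$, the explicit weights $\mathbf{q}_j=\sum_{i:f_i\in\Phi_j}p_i/n_i$ and $\tilde{\mathbf{p}}_j(i)=(p_i/n_i)/\mathbf{q}_j$ with $n_i=|\{j:f_i\in\Phi_j\}|$ make the one-step consistency identity an immediate computation rather than an existence statement for a linear system, and this is cleaner when verifying part (2) by $N$-fold iteration. Parts (1), (3), (4), (5), (7) as you sketch them are otherwise consistent with the paper.
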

Recall that by Claim \ref{Claim key} our IFS $\Phi$ satisfies the assumptions of Theorem \ref{Theorem disint}. The main difference between Theorem \ref{Theorem disint} and its one-dimensional counterpart \cite[Theorem 2.4]{algom2023polynomial}, apart from the different transfer operator, is Part (6). This is a version of Oh-Winter's non-concentration property \cite[Section 4.1]{Oh2017winter}. Our  version is obviously different due to the model and self-conformal  setting. But, it does serve the same moral purpose - none of the $K_\omega$ look locally like circles as we zoom into any point in them. As for the proof, it differs from that of \cite[Theorem 2.4]{algom2023polynomial} only in the way we construct the model, and in Part (6). The other Parts of the proof are similar to their one-dimensional counter-parts, and we invite the reader to consult with that proof.

\subsubsection{Constructing the model}
Referring to  conditions \eqref{Add property 1} and \eqref{Add property 2} and their notations (recall that we assume they hold true), for $1\leq i\leq 4$ we define IFSs
$$\Psi_1  = \Psi_2= \lbrace f_1, f_2 \rbrace, \text{ and put } \Psi_3=\Psi_4= \lbrace f_3, f_4 \rbrace.$$
For all $k\notin \lbrace 1,...,4\rbrace$ we define the IFS 
$$\Psi_k  := \lbrace f_i,f_{i+1},f_k \rbrace$$ 
for the $i$ that is given by \eqref{Add property 1}. Note that by \eqref{Add property 1}, each $\Psi_k$ satisfies that
$$\bigcup_{f\in \Psi_k} f(D) \text{ is a disjoint union}.$$
Thus, for every $k\in \mathcal{A}$ we define $\Phi_k$ as a finite set of maps that  has the maximal possible cardinality among those satisfying
\begin{equation}\label{Eq max card}
\Psi_k \subseteq \Phi_k \subseteq \Phi, \text{ and } \bigcup_{f\in \Phi_k} f(D) \text{ is a disjoint union}.
\end{equation}
Note that such a collection may not be unique - in this case, simply pick one such family of maps. Thus, we put
$$I=\lbrace  \Phi_j:\, j\in \mathcal{A} \rbrace.$$
In particular, every $i\in \mathcal{A}$ corresponds to the IFS $\Phi_i$. 

Note that there is repetition in this construction: there are $f_j\in \Phi$ that appear in several IFSs in $I$. So, for every $i\in \mathcal{A}$ we put
$$n_i := \left| \lbrace j:\, f_i \in \Phi_j \rbrace \right|.$$
By definition $\nu=\nu_{\mathbf{p}}$ for a probability vector $\mathbf{p}\in \mathcal{P}(\mathcal{A})$. So, we can define $\mathbf{q}$, a probability vector on $I$, via  
$${\mathbf{q}}_j = \sum_{i\in \mathcal{A}:\, f_i \in \Phi_j} \frac{p_i}{n_i}\quad \text{ for all } j\in I.  $$
We can now define our Bernoulli selection  measure on $\Omega$ as $\mathbb{Q}:=\mathbf{q}^\mathbb{N}$. 

Finally, for all $i\in I$ we define the strictly positive probability vector
$$\tilde{\mathbf{p}}_j (i) = \frac{\frac{p_i}{n_i}}{\mathbf{q}_j}, \text{ for every } f_i \in \Phi_j. $$
This completes the construction of our model.

\subsubsection{Proof of Part (6) of Theorem \ref{Theorem disint}}
We first prove that the set $K$ has the non concentration property, similarly to \cite[Definition 4.2]{Oh2017winter}. Morally, the following Lemma means that if $K$  does not lie in an analytic curve (which is one of our assumptions), then every microset of $K$ does not lie in an analytic curve.
\begin{Lemma} \label{Lemma delta}
Suppose $K$ is not contained in an analytic curve. Then there exists $\delta'>0$ such that:

For every $x\in K, w\in S^1, \eta \in \mathcal{A}^*$ such that $x\in f_\eta (K)$, there exists $y\in f_\eta (K)$ such that
$$\left| \left\langle x-y,\,w \right\rangle \right| > \delta\cdot \left| f_\eta ' (0) \right|.$$
\end{Lemma}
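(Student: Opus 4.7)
The plan is to argue by contradiction via a normal family / compactness argument, renormalizing the maps $f_\eta$ on the unit disc so that a subsequential limit produces an analytic curve containing $K$. Suppose no such $\delta>0$ exists. Then one can extract sequences $\eta_k\in \mathcal{A}^*$, $\tilde x_k\in K$ with $x_k:=f_{\eta_k}(\tilde x_k)\in f_{\eta_k}(K)$, and $w_k\in S^1$ satisfying
$$\sup_{\tilde y\in K}\bigl|\bigl\langle f_{\eta_k}(\tilde x_k)-f_{\eta_k}(\tilde y),\, w_k\bigr\rangle\bigr|< \tfrac{1}{k}\,|f'_{\eta_k}(0)|.$$
Introduce the normalized holomorphic maps
$$ h_k(z) := \frac{f_{\eta_k}(z)-f_{\eta_k}(\tilde x_k)}{f'_{\eta_k}(0)}, \qquad z\in D.$$
By the bounded distortion estimate \eqref{Eq bdd distortion} the derivatives satisfy $|h'_k(z)|\in[L^{-1},L]$ on $D$, and since $D$ is convex and $h_k(\tilde x_k)=0$ the holomorphic mean value inequality gives $|h_k(z)|\le L|z-\tilde x_k|\le 2L$. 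Because each $f_i\in C^\omega(D)$ extends holomorphically to a neighborhood of $D$, so does each $h_k$, and Montel's theorem provides a (relabeled) subsequence along which $h_k\to G$ locally uniformly on a neighborhood of $D$, where $G$ is holomorphic with $|G'(z)|\in[L^{-1},L]$ everywhere on $D$. Passing to further subsequences, $\tilde x_k\to \tilde x_\infty\in K$ and $w_k\to w_\infty\in S^1$.

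Dividing the failure hypothesis by $|f'_{\eta_k}(0)|$ gives $|\langle h_k(\tilde y),w_k\rangle|<1/k$ for every $\tilde y\in K$; passing to the limit yields $\langle G(\tilde y),w_\infty\rangle=0$ for every $\tilde y\in K$. Equivalently, the real-analytic function
$$ u(z):=\mathrm{Re}\bigl(\overline{w_\infty}\, G(z)\bigr) $$
vanishes identically on $K$. Now $\overline{w_\infty}G$ is holomorphic, so by the Cauchy--Riemann equations $|\nabla u(z)|=|G'(z)|\ge L^{-1}>0$ on $D$; in particular $u$ is a real-analytic submersion, and its zero set $\{u=0\}$ is a smooth $1$-dimensional real-analytic submanifold of $D$ --- that is, an analytic planar curve. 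Hence $K\subseteq \{u=0\}$, contradicting the hypothesis that $K$ is not contained in an analytic planar curve.

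The one subtle point is extracting a holomorphic limit $G$ whose derivative keeps the uniform lower bound $|G'|\ge L^{-1}$: this is guaranteed because locally uniform convergence of holomorphic functions implies convergence of all derivatives, and because the distortion bound \eqref{Eq bdd distortion} (together with \eqref{Eq tilde C}) is uniform in $\eta_k$. The argument uses in an essential way that the domain is the closed disc $D=\overline{B_1(0)}$, so that the complex normalization by the scalar $f'_{\eta_k}(0)$ makes sense globally and the resulting family is manifestly normal --- this is the same feature exploited in Claim \ref{Claim UNI} and remarked on after Theorem \ref{Main Theorem analytic}.
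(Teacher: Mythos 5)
Your proposal is essentially the paper's own argument (blow up by the derivative, Montel/bounded-distortion compactness, pass to a limit, conclude $K$ lies in a level curve of a real-analytic submersion), with a slightly cleaner treatment: you handle the case of bounded word length and $|f'_{\eta_k}(0)|\to 0$ uniformly instead of splitting into two cases, and you make explicit why the limiting zero set is an analytic curve via the Cauchy--Riemann bound $|\nabla u|=|G'|\ge L^{-1}$.

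One small inaccuracy worth fixing: you normalize by the \emph{complex} number $f'_{\eta_k}(0)$, but then assert that dividing the failure hypothesis by the \emph{modulus} $|f'_{\eta_k}(0)|$ gives $|\langle h_k(\tilde y),w_k\rangle|<1/k$. This is not quite right, since for complex $\lambda$ one has $\mathrm{Re}(a\bar w/\lambda)\ne \mathrm{Re}(a\bar w)/|\lambda|$; the correct statement is $|\langle h_k(\tilde y),\tilde w_k\rangle|<1/k$ with $\tilde w_k:=\bigl(\overline{f'_{\eta_k}(0)}/|f'_{\eta_k}(0)|\bigr)w_k\in S^1$. Either replace $w_k$ by $\tilde w_k$ and extract a convergent subsequence from the $\tilde w_k$, or (as the paper does) normalize by the real scalar $|f'_{\eta_k}(0)|$ from the outset, which keeps $h_k$ holomorphic and preserves the distortion bounds. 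With that cosmetic correction the argument is complete.
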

\begin{proof}
We follow the proof of \cite[Theorem 4.3]{Oh2017winter}. If the conclusion fails, then there are $x_n \in K, w_n \in S^1,\eta_n\in \mathcal{A}^*$ such that $x_n \in f_{\eta_n} (K)$ and  for any sequence $y_n \in f_{\eta_n} (K)$
$$\frac{\left| \left\langle x_n-y_n,\,w_n \right\rangle \right|}{\left| f_{\eta_n} ' (0) \right|}\rightarrow 0.$$
Note that we may assume $|\eta_n|\rightarrow \infty$. Indeed, otherwise we may assume $\eta_n = \eta$ is fixed. Taking convergent subsequences of $x_n \rightarrow x$ and $w_n \rightarrow w$, and since $f_\eta (K)$ is perfect, it follows that  for every $y\in f_\eta (K)$
$$\left\langle x-y,\,w \right\rangle=0.$$
Therefore, $f_\eta (K)$ is contained in a line. Thus, $K$ is contained in an analytic curve, a contradiction.

Thus, writing $\epsilon_n :=\left| f_{\eta_n} ' (0) \right|$, we may assume $\epsilon_n \rightarrow 0$.  Put
$$\phi_n(y):= \frac{y-x_n}{\epsilon_n}.$$
Consider the map
$$\phi_n \circ f_{\eta_n}:D\rightarrow D.$$
By bounded distortion the derivative of $\phi_n \circ f_{\eta_n}$ is uniformly bounded, so we may assume it converges locally uniformly to a non-constant univalent function $g:D\rightarrow D$. Using the same convergent subsequences $x_n \rightarrow x$ and $w_n \rightarrow w$ as above, it follows that $K$ is contained in the curve
$$g^{-1} \left( (w-x)^\perp \right).$$
This is a contradiction. 

\end{proof}

Note that $\delta'$  as in Lemma \ref{Lemma delta} depends only on $K$. Also, the constant $L$ from the bounded distortion principle \eqref{Eq bdd distortion} is the same regardless of inducing. So, by further inducing if needed, we may assume that
$$\delta'-2L\cdot \rho >0.$$
Indeed, here we are using that the only quantity that changes in the above equation when we induce the IFS is $\rho$, that becomes increasingly small. Also, note that if needed we can assume our original induced IFS already satisfies this condition (otherwise, we first induce and then run the construction as  in Claim \ref{Claim key}).

The moral meaning of the following Lemma is that if $K$  does not lie in an analytic curve, and we are careful about constructing our model, then for every $\omega \in \Omega$, every microset of $K_\omega \cap K$  does not lie in an analytic curve. We note that this property can fail in general: For example, any model constructed by using at least one degenerate IFS $\lbrace f \rbrace$ would fail this property (indeed, here at least one $K_\omega$ will just be a point). 
\begin{Lemma} \label{Lemma iterative}
There exists $\delta>0$ such that: For every $\omega \in \Omega$, $x\in K_\omega$, $w\in S^1, \eta\in X_\omega ^*$ such that $x\in f_\eta(K)\cap K_\omega$, there is some $y\in f_\eta(K)\cap K_\omega$, such that
$$\left| \left\langle x-y,\,w \right\rangle \right| > \delta \cdot \left| f_\eta ' (0) \right|.$$
\end{Lemma}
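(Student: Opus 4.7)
The plan is to bootstrap from Lemma \ref{Lemma delta}, which already gives non-concentration for the full attractor $K$, by approximating the point it produces by a nearby point in the smaller set $f_\eta(K)\cap K_\omega$. The first observation I would record is that separation in the model (Theorem \ref{Theorem disint}(4)) iterates to every level, so for $\eta\in X_N^{(\omega)}$ the cells $f_{\eta'}(D)$ with $\eta'\neq\eta$ are disjoint from $f_\eta(D)\supseteq f_\eta(K)$, giving
\[
f_\eta(K)\cap K_\omega = f_\eta(K_{\sigma^N\omega}).
\]
Thus I need a candidate $y$ of the form $f_\eta(z)$ with $z\in K_{\sigma^N\omega}$.

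Since each coordinate of $\eta\in X_N^{(\omega)}$ is an element of $\Phi$, the composition $f_\eta$ corresponds to a word in $\mathcal{A}^N$, so Lemma \ref{Lemma delta} applies and produces $y'=f_\eta(z')\in f_\eta(K)$ with $|\langle x-y',w\rangle|>\delta'\cdot|f_\eta'(0)|$, where $\delta'$ is the constant provided by that lemma. Write $z'\in f_a(K)$ for some $f_a\in\Phi$; the aim is to find $z\in K_{\sigma^N\omega}$ close to $z'$ so that \eqref{Eq bdd distortion} translates the closeness of $z,z'$ into closeness of $y:=f_\eta(z)$ and $y'$ at the correct scale $|f_\eta'(0)|$.

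Producing $z$ is where the maximality of $\Phi_{\omega_{N+1}}$ from \eqref{Eq max card} is used. If $f_a\in\Phi_{\omega_{N+1}}$, I pick any $z\in f_a(K_{\sigma^{N+1}\omega})\subseteq K_{\sigma^N\omega}$; both $z,z'$ lie in $f_a(D)$, so $|z-z'|\leq 2\rho$. If $f_a\notin\Phi_{\omega_{N+1}}$, maximality forces $f_a(D)\cap f_b(D)\neq\emptyset$ for some $f_b\in\Phi_{\omega_{N+1}}$, and I then choose $z\in f_b(K_{\sigma^{N+1}\omega})\subseteq K_{\sigma^N\omega}$, so that $|z-z'|$ is bounded by the diameter of the connected union $f_a(D)\cup f_b(D)$, hence by $4\rho$. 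Either way the bounded distortion estimate \eqref{Eq bdd distortion} gives $\sup_D|f_\eta'|\leq L|f_\eta'(0)|$, so by the mean value inequality $|y-y'|\leq L|f_\eta'(0)|\cdot|z-z'|$, bounded by a fixed multiple of $L\rho\cdot|f_\eta'(0)|$. The triangle inequality then yields
\[
|\langle x-y,w\rangle| \geq |\langle x-y',w\rangle| - |y-y'| \geq (\delta'-cL\rho)\cdot|f_\eta'(0)|
\]
for an absolute constant $c$, and I set $\delta:=\delta'-cL\rho>0$, positive by the reduction $\delta'-2L\rho>0$ set up just before the statement (any fixed numerical constant in front of $L\rho$ can be absorbed by slightly more inducing, since $\delta'$ and $L$ are unaffected by inducing while $\rho$ shrinks geometrically).

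The main obstacle is the second case of the approximation step: the map $f_a$ containing $z'$ may simply not appear among the maps available at depth $N+1$ in the model. Handling this without destroying the scale is possible only because of how carefully $\Phi_{\omega_{N+1}}$ was built in \eqref{Eq max card} — every omitted map of $\Phi$ has its image overlapping some chosen cell — and this is precisely the point in the argument where the specific properties of the model from Section \ref{Section dis} are needed beyond those already used to establish Lemma \ref{Lemma delta}.
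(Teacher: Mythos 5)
Your proposal is correct and follows essentially the same two-case structure as the paper's own proof: apply Lemma \ref{Lemma delta} to get a candidate $y'\in f_\eta(K)$, then replace it by a nearby point of $f_\eta(K)\cap K_\omega$ either inside the same next-level cell (when it belongs to the model) or inside an overlapping chosen cell guaranteed by the maximal-cardinality condition \eqref{Eq max card}. The only cosmetic differences are that you work with preimages $z,z'$ under $f_\eta$ rather than directly with $y,y'$, and your bound picks up a slightly larger numerical constant in front of $L\rho$; you correctly observe this is absorbed by further inducing, which is consistent with how the paper normalizes $\delta'-2L\rho>0$ beforehand.
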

\begin{proof}
By Lemma \ref{Lemma delta}  there exists $y\in f_\eta (K)$ such that the conclusion holds with $\delta'$. Our goal is  show that we may assume $y\in K_\omega$ as well, by perhaps replacing $\delta'$ with a smaller $\delta$.

Thus, if $y\in K_\omega$ we are done. Otherwise, let $j\in \mathcal{A}$ be such that $y\in f_{\eta *j}(K)$. 
\begin{itemize}
\item Suppose $\eta j\in X_\omega ^*$. Since $D$ is convex,  for any $z\in f_{\eta j}(K)\cap K_\omega$ there is some $z_0\in D$ such that 
$$\left| y-z\right|\leq \left| f_{\eta j} ' (z_0)\right| \leq L\cdot \rho \cdot \left| f_\eta ' (0)\right|.$$
Thus,
$$\left| \left\langle x-z,\,w \right\rangle \right| \geq \delta'\cdot \left| f_\eta ' (0) \right|-L\cdot \rho \cdot \left| f_\eta ' (0)\right|.$$
So, $z\in f_\eta(K)\cap K_\omega$ does the job with $\delta = \delta'-L\cdot \rho>0$.

\item  Suppose $\eta*j\notin X_\omega ^*$. Then there must exist some $i\in \Phi_{\omega_{|\eta|+1}}$ such that $f_i(D)\cap f_j(D)\neq \emptyset$. Indeed, otherwise the IFS $\Phi_{\omega_{|\eta|+1}}$ contradicts the maximum-cardinality condition we imposed in \eqref{Eq max card}.

Then for any $z\in f_{\eta*i}(D)$, arguing as above
$$\left| y-z\right|\leq  2L\cdot \rho \cdot \left| f_\eta ' (0)\right|.$$
Thus,
$$\left| \left\langle x-z,\,w \right\rangle \right| \geq \delta\cdot \left| f_\eta ' (0) \right|-2L\cdot \rho \cdot \left| f_\eta ' (0)\right|.$$
So,  any $z\in f_{\eta*i}(K)\cap K_\omega$ does the job with $\delta = \delta'-2L\cdot \rho>0$.
\end{itemize}

\end{proof}

\subsection{Statement of spectral gap and reduction to the construction of Dolgopyat operators} \label{Section reduction}
A standard norm to put on $C^1(D)=C^1(D,\, \mathbb{C})$ is
\begin{equation} \label{Eq B-Q norm}
||\varphi||_{C^1} = ||\varphi||_\infty + ||\nabla \varphi||_{\infty}.
\end{equation}
As in the works of Oh-Winter \cite[equation (2.2)]{Oh2017winter} and  Dolgopyat \cite[Section 6]{Dolgopyat1998rapid},  we  put  another norm on $C^1(D)$ via, for every $b \neq 0$,
\begin{equation} \label{Eq norm dol theta}
||\varphi||_{(b)} = ||\varphi||_\infty + \frac{||\nabla \varphi||_{\infty}}{|b|} \text{ if } b\geq 1, \, ||\varphi||_{(b)}=||\varphi||_{C^1} \text{ otherwise}.
\end{equation}
It is clear that these norms are equivalent.

Let $\Phi$ be an IFS as in Theorem \ref{Theorem disint}, and let $\nu=\nu_\mathbf{p}$ be a self-conformal measure. We can now state the main result of this Section, and also the main technical result of this paper, which is that the operators $P_{s,\ell}$ have spectral gap in some uniform sense. This is a stripped down axiomatic version of a spectral gap result of Oh-Winter \cite[Theorem 2.5]{Oh2017winter} for certain Julia sets. In particular, we do not require a Markov partition, or that $\nu$ is a doubling measure. It is also a higher dimensional version of our previous work \cite[Theorem 2.8]{algom2023polynomial}.
\begin{theorem} \label{Theorem spectral gap}
Let $\Phi$ be a  $C^\omega (\mathbb{C})$ IFS satisfying \eqref{Add property 1} and \eqref{Add property 2}. In addition, assume $K_\Phi$ does not belong to an analytic curve.

Then for any self-conformal measure $\nu=\nu_{\mathbf{p}}$ there are $C,\gamma, \epsilon,R>0$ and $0<\alpha<1$ such that for all $|b|+|\ell|>R$, $a\in \mathbb{R}$ satisfying $|a|<\epsilon$, and integer $n>0$
\begin{equation} \label{Eq spectral gap}
||P_{a+ib, \ell} ^{n} ||_{C^1} \leq C\cdot \left( |b|+|\ell|\right)^{1+\gamma}\cdot  \alpha^n.
\end{equation}
\end{theorem}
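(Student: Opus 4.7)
\medskip

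\noindent\textbf{Proof proposal.} The plan is to follow the Dolgopyat--Naud--Oh--Winter route, but transported to the cocycle/disintegration setting. First I would carry out the standard \emph{Lasota--Yorke reduction}: using bounded distortion \eqref{Eq bdd distortion} and the estimate \eqref{Eq tilde C}, one shows that for $s=a+ib$ with $|a|$ small and $|b|\geq 1$,
\begin{equation*}
\|P_{s,\ell}^{n} f\|_{C^1} \;\lesssim\; (|b|+|\ell|)\cdot \|P_{s,\ell}^{n} f\|_{\infty} + \alpha_0^{n}\|f\|_{C^1}
\end{equation*}
for some $\alpha_0<1$, hence it is enough to bound the uniform norm by $(|b|+|\ell|)^{\gamma}\alpha^{n}\|f\|_{C^1}$. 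After a Cauchy--Schwarz step against the stationary measure, this uniform bound in turn follows from an $L^2(\nu)$-decay estimate of the form $\|P_{s,\ell}^{n}f\|_{L^2(\nu)}\leq C\alpha^{n}\|f\|_{(b,\ell)}$ for the norm \eqref{Eq norm dol theta}.

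The $L^2$-decay is the heart of the argument, and I would obtain it by constructing \emph{Dolgopyat operators} in each branch of the disintegration given by Theorem~\ref{Theorem disint}. Fix $\omega\in\Omega$ and a large integer $N$. Using separation (Part (4)) and the doubling property of $\mu_\omega$ (Part (7)), partition $\supp(\mu_\omega)$ into a finite collection of well-overlapping balls at scale $\sim|b|^{-1}$. In each ball choose the pair of cylinder words $\alpha_1^N,\alpha_2^N$ supplied by UNI (Part (5)) together with a second pair of points witnessing non-concentration (Part (6)); the coupling Lemma mentioned in Section~\ref{Section method} (a consequence of the Cauchy--Riemann equations and real-analyticity of $\Phi$) then guarantees that the map $x\mapsto (b\cdot c(\alpha_1^N,x)-b\cdot c(\alpha_2^N,x),\,\ell\cdot\theta(\alpha_1^N,x)-\ell\cdot\theta(\alpha_2^N,x))$ is a local $C^2$ diffeomorphism onto a region of $\mathbb{R}\times\mathbb{T}$ of size comparable to $\min(|b|,|\ell|)$. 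This is precisely the mechanism that lets one produce interval-sized cancellation either in the norm direction (when $|b|\gg|\ell|$) or in the angle direction (when $|\ell|\gtrsim|b|$). Following the scheme of \cite[Sections 5.3--5.5]{Oh2017winter} one then defines a majorizing cutoff $\mathcal{N}_{J,\omega}(H)$ that is equal to $H$ outside the cancellation zones and equal to $(1-\beta)H$ inside, and verifies (i) the domination $|P^{N}_{s,\ell,\omega,N}(fH)|\leq \mathcal{N}_{J,\omega}(H)$ for every $f$ with $|f|\leq H$ and $H$ in a stable cone, and (ii) $\|\mathcal{N}_{J,\omega}(H)\|_{L^2(\mu_\omega)}\leq \alpha\|H\|_{L^2(\mu_\omega)}$ with $\alpha<1$ independent of $\omega$.

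Iterating domination and using Theorem~\ref{Theorem disint}(2),
\begin{equation*}
\|P_{s,\ell}^{kN} f\|_{L^2(\nu)}^{2} \;\leq\; \int \sum_{\omega\in\Omega^{kN}}\mathbb{Q}([\omega])\,\bigl|P_{s,\ell,\omega,kN}f\bigr|^{2}\,d\mathbb{Q}\;\leq\;\alpha^{2k}\|f\|_{(b,\ell)}^{2},
\end{equation*}
which closes the argument with an appropriate choice of the parameter $\gamma$ to absorb the $|b|$ loss from the Lasota--Yorke step. The pieces fit together because UNI, NCP and doubling are preserved \emph{in every fiber} of the model, by Parts (5), (6), (7) of Theorem~\ref{Theorem disint}.

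The main obstacle, and the most delicate step, is step (i): verifying the uniform $\omega$-independent contraction of the Dolgopyat operator while jumping between different fibers $\mu_\omega$ of the disintegration. Concretely, one must show that after $N$ steps along $\omega$, the measure of the "cancellation zones" produced by the UNI pair is uniformly bounded below; this is where the non-concentration property (Part (6)) is indispensable, since it prevents $K_\omega$ from collapsing into an analytic curve along which the phase $\ell\theta$ could degenerate and the angle cancellation disappear. The subtlety, relative to \cite{algom2023polynomial}, is that the angle cocycle $\theta$ now competes with the norm cocycle $c$, so one must balance the two $\beta$-contractions carefully and ensure that the stable cone is invariant under the combined action; this is precisely the point at which the $C^\omega$ assumption (via Cauchy--Riemann and the $C^2$ coupling alluded to in Section~\ref{Section method}) is used rather than mere $C^2$ regularity.
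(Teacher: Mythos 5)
Your high-level scaffolding matches the paper's: a Lasota--Yorke style a-priori bound to pass from $C^1$ to $L^2$, then $L^2$ decay via Dolgopyat operators built fiberwise over the disintegration of Theorem~\ref{Theorem disint}, with UNI, NCP, separation and doubling preserved in every fiber. But the central cancellation mechanism, as you describe it, would not close. You claim that the frequency-weighted map $x\mapsto\bigl(b\cdot(c(\alpha_1^N,x)-c(\alpha_2^N,x)),\ \ell\cdot(\theta(\alpha_1^N,x)-\theta(\alpha_2^N,x))\bigr)$ is a local $C^2$ diffeomorphism onto a region of size $\sim\min(|b|,|\ell|)$, giving cancellation in the norm direction when $|b|\gg|\ell|$ or in the angle direction when $|\ell|\gtrsim|b|$. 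This fails on two counts. First, if $\ell=0$ (or $b=0$) your map has rank one and is not a diffeomorphism at all, yet the theorem must certainly cover $\ell=0$; second, a gain that degenerates like $\min(|b|,|\ell|)$ cannot yield the required $\alpha^n$ decay uniformly in the frequency pair. The correct mechanism (Lemma~\ref{Lemma 3.2 Oh}) is to show that the frequency-\emph{independent} map $T_N=\bigl(\log|f_{\alpha_1^N}'|-\log|f_{\alpha_2^N}'|,\ \arg f_{\alpha_1^N}'-\arg f_{\alpha_2^N}'\bigr)$ is a local diffeomorphism with Jacobian uniformly bounded above and below; this is exactly where Cauchy--Riemann and analyticity enter. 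The frequencies then appear only through the single combined covector $w_i = b\cdot\nabla T_N^{(1)}+\ell\cdot\nabla T_N^{(2)}$, and the Jacobian lower bound forces $|w_i|\gtrsim\delta_2(|b|+|\ell|)$ as in \eqref{Eq 5.7 Oh}. The non-concentration property of Theorem~\ref{Theorem disint}(6) is then used to find, in each covering ball, a pair of points spread in the (a priori unknown) direction $\hat w_i$. There is no case split between ``norm cancellation'' and ``angle cancellation'': the two frequencies act through a single coupled phase, and the NCP guarantees $K_\omega$ has spread in every direction, so $\hat w_i$ is never a blind direction.

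A further, smaller, point: the iterated $L^2$ estimate must be carried out against the shifted fiber measures $\mu_{\sigma^{nN}\omega}$, since the Dolgopyat operator at each stage maps the fiber indexed by $\omega$ to the one indexed by $\sigma^N\omega$ (see Lemma~\ref{Lemma 5.4 Naud} part (2) and Proposition~\ref{Prop. 5.3 Naud}); only after iterating does one integrate over $\mathbb{Q}$ and use $\sigma$-invariance of $\mathbb{Q}$ together with $\nu=\int\mu_\omega\,d\mathbb{Q}$ to recover an estimate in $L^2(\nu)$. Your displayed chain of inequalities suppresses the fiber bookkeeping and is not literally correct as written; it needs to be rewritten to track which $\mu_{\sigma^{jN}\omega}$ appears at each step.
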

By the discussion in Section \ref{Section induced IFS}, any IFS satisfying the assumptions of Theorem \ref{Main Theorem analytic} has an induced IFS that meets the condition of Theorem \ref{Theorem spectral gap}.

It is a standard fact that Theorem \ref{Theorem spectral gap} may be reduced to the construction of Dolgopyat operators, that we now discuss; this follows by an $L^2$-reduction argument, see the end of this subsection. Given $A>0$ we define the cone
$$\mathcal{C}_A = \lbrace f\in C^1(D): \, f>0,\text{ and } \left|\nabla f(x)\right|\leq A\cdot f(x), \, \forall x\in D \rbrace.$$
We remark that for any $u,v \in D$ and $f\in \mathcal{C}_A$ we have
$$e^{-A|u-v|} \leq f(u)/f(v) \leq e^{A|u-v|}.$$
The following Lemma is the key to the proof of Theorem \ref{Theorem spectral gap}; in fact, it implies it by standard methods. It is a higher dimensional  variant of \cite[Lemma 2.12]{algom2023polynomial}. We remark that we differ a bit from how Oh-Winter organize their proof; the closest analogue in their work is \cite[Theorem 5.7]{Oh2017winter}. In the following statement we refer to the model constructed in Theorem \ref{Theorem disint}, with its various notations (it has the same assumptions as Theorem \ref{Theorem spectral gap}). See Section \ref{Section dis}.
\begin{Lemma} \label{Lemma 5.4 Naud}
Assume the conditions of Theorem \ref{Theorem spectral gap} are met, and let $\Sigma$ be the model from Theorem \ref{Theorem disint}. 
There is an integer $N>0$, some $A>1$ and some $0< \alpha <1$ such that for every $s=a+ib, \ell\in \mathbb{Z}$ where $|a|$ is small and $|b|+|\ell|$ is large:

For all $\omega\in \Omega$ there is a finite set of bounded operators $\lbrace N_{s,\ell} ^J \rbrace_{J\in \mathcal{E}_{s, \ell, \omega}}$ on the space $C^1(D)$ satisfying:
\begin{enumerate}
\item For every $J\in \mathcal{E}_{s, \ell, \omega}$ the operator $N_{s,\ell} ^J$ stabilizes the cone $\mathcal{C}_{A\left( |b|+ |\ell| \right)}$.

\item If $H\in \mathcal{C}_{A\left( |b|+ |\ell| \right)}$ and $J\in \mathcal{E}_{s,\ell, \omega}$ then
$$\int | N_{s,\ell} ^J H|^2 d \mu_{\sigma^N \omega} \leq \alpha\cdot \int P_{0,0, \omega,N} \left( \left| H^2 \right| \right) d \mu_{\sigma^N \omega}.$$

\item Suppose $H\in \mathcal{C}_{A\left( |b|+ |\ell| \right)}$ and $f\in C^1(D)$ are such that $|f|\leq H$ and $|\nabla f|\leq A\left( |b|+ |\ell| \right)H$. Then for all $\omega \in \Omega$ we can find some $J\in \mathcal{E}_{s,\ell, \omega}$ satisfying
$$|P_{s, \ell, \omega, N} f| \leq N_{s,\ell} ^J H \quad \text{ and } |\nabla (P_{s, \ell, \omega, N} f)|\leq A\left( |b|+ |\ell| \right)N_s ^J H.$$
\end{enumerate} 
\end{Lemma}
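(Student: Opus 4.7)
The plan is to adapt Dolgopyat's cancellation argument, as implemented by Oh--Winter in \cite[Sections 5.3--5.5]{Oh2017winter}, to the model setting of Theorem \ref{Theorem disint}. Fix $\omega \in \Omega$ and choose $N$ large enough that $\rho^N (|b|+|\ell|)$ lies in a fixed window $[c_0, C_0]$ independent of $\omega$, $b$, $\ell$; this puts us at the critical scale. I would cover $D$ by a bounded-overlap family of balls $\{B_\beta\}$ of radius $r_\beta \asymp (|b|+|\ell|)^{-1}$. On each $B_\beta$, Part (5) of Theorem \ref{Theorem disint} combined with Lemma \ref{Lemma 3.2 Oh} (the Cauchy--Riemann upgrade of UNI to a uniform local-diffeomorphism statement for the joint norm-angle map) supplies a pair of words $\alpha_1, \alpha_2 \in X_N^{(\omega)}$ for which the combined phase
\[
\Theta_\beta(x) := 2\pi b\,\bigl(c(\alpha_1,x)-c(\alpha_2,x)\bigr) + \ell\,\bigl(\theta(\alpha_1,x)-\theta(\alpha_2,x)\bigr)
\]
has gradient of magnitude $\asymp |b|+|\ell|$ in some direction $w_\beta \in S^1$; equivalently, $\Theta_\beta$ sweeps through more than $2\pi$ across $B_\beta$.

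The index set $\mathcal{E}_{s,\ell,\omega}$ would enumerate all consistent assignments that specify, for each $B_\beta$, which of $f_{\alpha_1}(B_\beta)$ or $f_{\alpha_2}(B_\beta)$ is to be dampened. Given $J \in \mathcal{E}_{s,\ell,\omega}$, fix a $C^1$ cutoff $\psi_J : D \to [1-\eta, 1]$ equal to $1-\eta$ on the chosen image and equal to $1$ elsewhere, for a fixed small $\eta > 0$. Define
\[
N_{s,\ell}^J H(x) := P_{0,0,\omega,N}(\psi_J \cdot H)(x).
\]
Property (3) will follow from a two-circles argument: on each $B_\beta$ the phase mismatch between the paired branches is bounded away from $0$ modulo $2\pi$ on a definite portion of the ball, and the triangle inequality collapses the sum of the two paired contributions into $(\psi_J(f_{\alpha_1}(x)) + \psi_J(f_{\alpha_2}(x)))\,H(x)$; the corresponding gradient bound uses $|\nabla f_J| \le L \rho^N$ (bounded distortion) to absorb the extra factor $|b|+|\ell|$ that arises when the exponentials $e^{2\pi s c(J,\cdot)}\chi_\ell(e^{i\theta(J,\cdot)})$ are differentiated. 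For Property (1), expanding $\nabla N_{s,\ell}^J H$ produces a term with $\nabla \psi_J$ of size $|b|+|\ell|$, absorbed using $\psi_J \ge 1-\eta$, and a term with $\nabla H$, absorbed via cone membership; cone stability $|\nabla N_{s,\ell}^J H| \le A(|b|+|\ell|)\,N_{s,\ell}^J H$ then holds for $A$ sufficiently large.

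The main obstacle is Property (2), the $L^2$ contraction. By Jensen's inequality applied to the probability kernel $P_{0,0,\omega,N}$ and then iterating the stochastic stationarity \eqref{Eq stochastic self-similarity},
\[
\int |N_{s,\ell}^J H|^2 \, d\mu_{\sigma^N\omega} \le \int P_{0,0,\omega,N}(\psi_J^2\, H^2) \, d\mu_{\sigma^N\omega} = \int \psi_J^2\, H^2 \, d\mu_\omega.
\]
Since $\psi_J^2 \le 1$ everywhere and $\psi_J^2 \le 1-\eta'$ on the dampened region $E_J := \psi_J^{-1}(\{1-\eta\})$, the target contraction $\alpha = 1-\eta'' < 1$ reduces to showing that $E_J$ carries a uniformly positive $\mu_\omega$-fraction of each $B_\beta$. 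This is exactly where the non-concentration property Part (6) of Theorem \ref{Theorem disint} intervenes: for the direction $w_\beta$ in which $\nabla \Theta_\beta$ points, $K_\omega \cap B_\beta$ contains two points whose $\langle \cdot, w_\beta\rangle$-coordinates differ by at least $\delta\, r_\beta$, hence they cannot both lie on the undampened side of the level set. The doubling property Part (7) converts this geometric spread into a uniform $\mu_\omega$-lower bound on $E_J \cap B_\beta$, independent of $\omega$ thanks to the uniformity of $\delta$ and $C_D$. Making this gain quantitative, uniform across $\omega \in \Omega$ and all admissible $H$ in the cone, and patching it across the overlapping cover $\{B_\beta\}$, is the principal technical work and constitutes the content of Section \ref{Section proof}.
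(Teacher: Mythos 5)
Your overall architecture (Vitali-type cover at scale $(|b|+|\ell|)^{-1}$, cutoffs $\psi_J$, Dolgopyat operators $N_{s,\ell}^J = P_{\cdot,0,\omega,N}(\psi_J\,\cdot)$, UNI via Lemma~\ref{Lemma 3.2 Oh}, NCP, and doubling) matches the paper's. However, there are two interlocking problems in the sketch that would cause the proof of Part (3) to fail, and they show up as a misattribution of where NCP and doubling are actually used.

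First, the dampened region is too large. You declare $\psi_J=1-\eta$ on all of $f_{\alpha_i}(B_\beta)$, where $B_\beta$ has radius $r_\beta\asymp (|b|+|\ell|)^{-1}$. But the Oh--Winter cancellation, after Taylor-expanding $\operatorname{Arg}(x)$ and using that $\|\operatorname{Arg}\|_{C^1}\lesssim A(|b|+|\ell|)$, only shows that one of the two quantities $\Theta_1,\Theta_2$ stays $\le 1$ on a ball of radius $\asymp \delta_3\,r_\beta$ with $\delta_3=\delta_1\delta_2/(4A)\ll 1$, \emph{not} on all of $B_\beta$. On the bulk of $B_\beta$ the phase $\operatorname{Arg}$ can return to zero, so the pointwise domination $|P_{s,\ell,\omega,N}f|\le N_{s,\ell}^J H$ in Part~(3) is simply false for your $\psi_J$. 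Second, the center at which this cancellation happens is not necessarily the center $x_j$ of the covering ball; it may only occur near a second point $y_j\in B_{5\tilde\epsilon}(x_j)\cap K_{\sigma^N\omega}$, whose existence is exactly what NCP (Theorem~\ref{Theorem disint} Part~(6)) supplies. So the index set must record a choice in $\{1,2\}\times\{1,2\}$ per ball (branch \emph{and} which of $x_j,y_j$), not just $\{1,2\}$, and NCP belongs to the proof of Part~(3). Because the dampened balls are a $\delta_3$-fraction the size of the covering balls, they no longer cover $K_{\sigma^N\omega}$; it is precisely \emph{here}, in Part~(2), that the Federer property (Theorem~\ref{Theorem disint} Part~(7)) is needed, to show (Lemma~\ref{Lemma 5.7 Nuad}) that the tiny dampened balls still carry a uniform fraction of $\mu_{\sigma^N\omega}$-mass. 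Your Part~(2) argument uses NCP in the spot where doubling belongs, and uses doubling for a step that becomes vacuous in your (over-aggressive) construction. Finally, a minor but real point: with $N_{s,\ell}^J=P_{0,0,\omega,N}(\psi_J\,\cdot)$ the pointwise bound $|P_{s,\ell,\omega,N}f|\le N_{s,\ell}^J H$ fails when $a=\Re(s)>0$ because of the factor $e^{2\pi a\,c(J,x)}>1$; one should define $N_{s,\ell}^J=P_{a,0,\omega,N}(\psi_J\,\cdot)$ as in the paper and compare to $P_{0,0,\omega,N}$ only at the end by paying a harmless factor $e^{2\pi|a|D'N}$.
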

The operators  we construct in Lemma \ref{Lemma 5.4 Naud} are called \textit{Dolgopyat operators}.

Let us now briefly explain why Lemma \ref{Lemma 5.4 Naud} implies Theorem \ref{Theorem spectral gap}. The idea is that Lemma \ref{Lemma 5.4 Naud} implies the following $L^2$ contraction statement in Proposition \ref{Prop. 5.3 Naud}. This Proposition, in turn, is known to imply Theorem \ref{Theorem spectral gap}: The proof that Proposition \ref{Prop. 5.3 Naud} implies Theorem \ref{Theorem spectral gap} is similar to the corresponding argument in \cite[Section 2.3]{algom2023polynomial}. The proof that Lemma \ref{Lemma 5.4 Naud} implies Proposition \ref{Prop. 5.3 Naud} is also similar to the one in \cite[Section 2.4]{algom2023polynomial}. We thus omit the details.

\begin{Proposition} \label{Prop. 5.3 Naud} Suppose the assumptions of Theorem \ref{Theorem spectral gap} are met.  There exist $N\in \mathbb{N}$ and some  $0<\alpha<1$ so that for $s=a+ib$ where $|a|$ is small enough and $|b|+|\ell|$ is large enough that satisfy: 

For all $\omega \in \Omega$,
$$\int \left| P_{s, \ell,  \omega , nN} W \right|^2 \, d\mu_{\sigma^{nN} \omega} \leq \alpha^n$$
for every $W\in C^1(D)$ such that $||W||_{(|b|+|\ell|)} \leq 1$. 
\end{Proposition}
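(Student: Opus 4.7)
The plan is to iterate Lemma \ref{Lemma 5.4 Naud} along the orbit $\omega, \sigma^N\omega, \sigma^{2N}\omega, \ldots$, building a sequence of majorants in the cone $\mathcal{C}_{A(|b|+|\ell|)}$ that simultaneously dominate the iterates of $W$ and contract in $L^2$ by a factor $\alpha$ at each step.

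First I would unpack the hypothesis $\|W\|_{(|b|+|\ell|)} \leq 1$ as the two bounds $\|W\|_\infty \leq 1$ and $\|\nabla W\|_\infty \leq |b|+|\ell|$, and initialize with $H_0 \equiv 1$. Since $\nabla H_0 = 0$ and $A \geq 1$, we have $H_0 \in \mathcal{C}_{A(|b|+|\ell|)}$ together with $|W| \leq H_0$ and $|\nabla W| \leq A(|b|+|\ell|) H_0$. Now proceed by induction: given $W_{k-1} := P_{s,\ell,\omega,(k-1)N}(W)$ and a cone element $H_{k-1} \in \mathcal{C}_{A(|b|+|\ell|)}$ with $|W_{k-1}| \leq H_{k-1}$ and $|\nabla W_{k-1}| \leq A(|b|+|\ell|) H_{k-1}$, apply Part (3) of Lemma \ref{Lemma 5.4 Naud} at the shifted parameter $\sigma^{(k-1)N}\omega$ to produce $J_k \in \mathcal{E}_{s,\ell,\sigma^{(k-1)N}\omega}$ such that $H_k := N_{s,\ell}^{J_k} H_{k-1}$ satisfies $|W_k| \leq H_k$ and $|\nabla W_k| \leq A(|b|+|\ell|) H_k$, where $W_k = P_{s,\ell,\omega,kN}(W)$ by the semigroup identity $P_{s,\ell,\omega,kN} = P_{s,\ell,\sigma^{(k-1)N}\omega,N} \circ P_{s,\ell,\omega,(k-1)N}$. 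Part (1) guarantees $H_k \in \mathcal{C}_{A(|b|+|\ell|)}$, closing the induction.

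For the $L^2$ estimate, combine the pointwise bound $|W_n| \leq H_n$ with Part (2) of Lemma \ref{Lemma 5.4 Naud} at $\sigma^{(n-1)N}\omega$ to get
\begin{equation*}
\int |W_n|^2 \, d\mu_{\sigma^{nN}\omega} \leq \int |N_{s,\ell}^{J_n} H_{n-1}|^2 \, d\mu_{\sigma^{nN}\omega} \leq \alpha \int P_{0,0,\sigma^{(n-1)N}\omega, N}(H_{n-1}^2) \, d\mu_{\sigma^{nN}\omega}.
\end{equation*}
The stochastic stationarity relation \eqref{Eq stochastic self-similarity}, iterated $N$ times, yields the duality $\int g \, d\mu_\tau = \int P_{0,0,\tau,N}(g) \, d\mu_{\sigma^N \tau}$ for any $\tau \in \Omega$, so the right-hand side equals $\alpha \int H_{n-1}^2 \, d\mu_{\sigma^{(n-1)N}\omega}$. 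Iterating $n$ times and invoking the base case $H_0 \equiv 1$ gives $\int |W_n|^2 \, d\mu_{\sigma^{nN}\omega} \leq \alpha^n \int H_0^2 \, d\mu_\omega = \alpha^n$, as desired.

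The argument is essentially bookkeeping once Lemma \ref{Lemma 5.4 Naud} is in hand; the only delicate point is verifying the composition identity for the disintegrated transfer operators along with the compatibility of the Dolgopyat construction under the shift $\sigma$, since the sets $\mathcal{E}_{s,\ell,\omega}$ and the operators $N_{s,\ell}^J$ vary with $\omega$. This is why the telescoping in the last display respects the fiberwise structure $\omega \mapsto \mu_\omega$ rather than the global measure $\nu$, and why the whole scheme is run at the level of the model $\Sigma$ rather than on $\nu$ directly.
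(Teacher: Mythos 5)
Your argument is correct and it is exactly the standard iteration that the paper has in mind (they only cite their earlier one-dimensional work, \cite[Section 2.4]{algom2023polynomial}, for the details): initialize $H_0\equiv 1$, inductively build majorants $H_k=N_{s,\ell}^{J_k}H_{k-1}$ using Part (3) of Lemma \ref{Lemma 5.4 Naud} applied at the shifted base point $\sigma^{(k-1)N}\omega$ together with the factorization $P_{s,\ell,\omega,kN}=P_{s,\ell,\sigma^{(k-1)N}\omega,N}\circ P_{s,\ell,\omega,(k-1)N}$, then telescope the $L^2$ norms via Part (2) and the duality $\int g\,d\mu_\tau=\int P_{0,0,\tau,N}(g)\,d\mu_{\sigma^N\tau}$ coming from the stationarity relation \eqref{Eq stochastic self-similarity}. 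You also correctly identified the only subtle point, namely that the Dolgopyat operators and parameter sets $\mathcal{E}_{s,\ell,\cdot}$ must track the shifted fibers $\sigma^{(k-1)N}\omega$ rather than a fixed $\omega$, which is precisely what the uniformity over $\omega\in\Omega$ in Lemma \ref{Lemma 5.4 Naud} is designed to accommodate.
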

We note that this is a randomized analogue of \cite[Theorem 2.7]{Oh2017winter} in Oh-Winter, the proof of which they attribute to Ruelle.

\subsection{Proof of Lemma \ref{Lemma 5.4 Naud}}  \label{Section proof}
\subsubsection{Some preliminary Lemmas}
In this Section we collect a few results that will be used in the proof of Lemma \ref{Lemma 5.4 Naud}. Fix the model given by Theorem \ref{Theorem disint}, and let us use freely its notations. First, we require the following standard covering result:
\begin{Lemma} \label{Lemma Vitali}
For every $\omega \in \Omega$ and every $\epsilon>0$ there exist $q=q(\omega, \epsilon)\in \mathbb{N}$ points $x_1,...,x_q\in K_\omega$ such that:
\begin{enumerate}
\item $K_\omega \subseteq \bigcup_{i=1} ^q B_{50 \epsilon} (x_i)$.

\item If $i\neq j$ then $B_{10 \epsilon} (x_i) \cap B_{10 \epsilon} (x_j)=\emptyset.$
\end{enumerate}
\end{Lemma}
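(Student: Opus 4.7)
The plan is to prove this by the standard greedy / Vitali--type packing argument, using that $K_\omega$ is compact (being the continuous image of the compact product space $X_\infty^{(\omega)}$ and contained in the bounded disc $D$). The constants $50$ and $10$ are generous; the natural argument actually produces a covering by balls of radius $20\epsilon$, which is then trivially strengthened to radius $50\epsilon$ as stated.

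First I would construct the points $x_1,\ldots,x_q$ greedily. Pick any $x_1\in K_\omega$. Having chosen $x_1,\ldots,x_j\in K_\omega$ with the balls $B_{10\epsilon}(x_1),\ldots,B_{10\epsilon}(x_j)$ pairwise disjoint, ask whether there exists some $x\in K_\omega$ with $B_{10\epsilon}(x)\cap B_{10\epsilon}(x_i)=\emptyset$ for all $i\le j$; if so, set $x_{j+1}$ to be any such point and continue, otherwise stop. By construction, condition (2) of the lemma is satisfied at every stage.

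Next I would show the process terminates, which gives the finite integer $q=q(\omega,\epsilon)$. Since $K_\omega\subseteq D$ is bounded, all the selected centers lie in a fixed bounded set, so the disjoint balls $B_{10\epsilon}(x_i)$ all lie in a common bounded neighborhood of $D$. A volume (Lebesgue measure) comparison on $\mathbb{R}^2$ then gives the crude bound $q\le C/\epsilon^2$ for a constant depending only on $D$, so the greedy selection halts.

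Finally I would verify condition (1). Fix $y\in K_\omega$. By the termination criterion, $B_{10\epsilon}(y)$ fails to be disjoint from the already chosen balls, so there is some index $i\in\{1,\ldots,q\}$ and a point $z\in B_{10\epsilon}(y)\cap B_{10\epsilon}(x_i)$. By the triangle inequality,
\[
|y-x_i|\le |y-z|+|z-x_i|<10\epsilon+10\epsilon=20\epsilon<50\epsilon,
\]
so $y\in B_{50\epsilon}(x_i)$, yielding the covering in (1). I do not anticipate any substantive obstacle: this is a standard $5r$-type covering lemma, with compactness of $K_\omega$ (not of $D$ directly) being the only input beyond the triangle inequality, and the constant $50$ is simply a comfortable overestimate of the actual value $20$ the greedy construction produces.
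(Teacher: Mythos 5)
Your proof is correct and is exactly the standard greedy $5r$-type covering argument; the paper itself gives no explicit proof but simply cites this argument from Falconer's book and Oh--Winter, so your write-up matches the intended approach. One tiny quibble: you attribute termination to compactness of $K_\omega$, but your volume argument actually only uses boundedness of $K_\omega\subseteq D$; compactness plays no role here.
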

\begin{proof}
This follows a Vitali covering argument, see e.g. \cite[Section 1.3]{falconer1986geometry} or \cite[Section 5.3]{Oh2017winter}.
\end{proof}

We also record the following useful and elementary Lemma. We use the notation $D'$ given by \eqref{Eq C and C prime}. 
\begin{Lemma} (A-priori bounds) \label{Lemma iterations}
For some $C_1>0$, for all $|a|$ small enough and $|b|+|\ell|$ large enough, for every $f\in C^1 (D)$, if $s=a+ib$, for all integer $n \in \mathbb{N},\ell,$ we have
$$|| \nabla \left( P_{s, \ell} ^n f\right)||_\infty \leq C_1 \left( |b|+|\ell|\right) \cdot ||P_a ^n f||_\infty+\rho ^n || P_a ^n \left( |\nabla f| \right)||_\infty, \text{ and also}$$
$$||\nabla  \left( P_{s, \ell, \omega, n} f\right) ||_\infty \leq C_1 \left( |b|+|\ell| \right) \cdot ||P_{a,  \omega, n}  f||_\infty+\rho ^n || P_{a, \omega, n} \left( |\nabla f| \right)||_\infty, \, \text{ for every }\omega.$$
Therefore, for some $C_6>0$, for every two integer $n\in \mathbb{N}, \ell$ and $s=a+ib$
$$|| P_{s,\ell} ^{n} ||_{|b|+|\ell|} \leq C_6 \cdot e^{nD' |a|}.$$
\end{Lemma}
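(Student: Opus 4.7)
The plan is to expand $P_{s,\ell}^n f$ as an explicit finite sum over words of length $n$, differentiate the sum term by term, and then combine the uniform bound on derivatives of $\log|f_\eta'|$ from \eqref{Eq tilde C} with the uniform contraction of the $f_\eta$ to control the three resulting families of terms.

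First I would iterate Definition \ref{Def transfer operator} to write
$$P_{s,\ell}^n f(x) = \sum_{\eta \in \mathcal{A}^n} p_\eta\, e^{2\pi s\, c(\eta,x)}\, \chi_\ell\!\left(e^{i\theta(\eta,x)}\right)\, f(f_\eta(x)),$$
with $p_\eta = \prod_{j=1}^n p_{\eta_j}$, and then apply $\nabla$ via the product and chain rules. The gradient splits into three families: (a) from differentiating the norm factor $e^{2\pi s c(\eta,x)}$, producing $2\pi s\,\nabla c(\eta,x)$; (b) from differentiating the character $\chi_\ell(e^{i\theta(\eta,x)}) = e^{i\ell\theta(\eta,x)}$, producing $i\ell\,\nabla\theta(\eta,x)$; and (c) from differentiating $f\circ f_\eta$, producing $\nabla f(f_\eta(x))\cdot f_\eta'(x)$.

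Next I would bound each family. For (a), $|\nabla c(\eta,\cdot)|\le \tilde C$ by \eqref{Eq tilde C}, contributing a factor $2\pi|s|\tilde C$. For (b), the key observation (and the only delicate point) is that on the disc $D$ a holomorphic branch of $\log f_\eta'$ exists, so $\log|f_\eta'|$ and $\theta(\eta,\cdot)=\arg f_\eta'$ form a harmonic-conjugate pair; the Cauchy-Riemann equations then give $|\nabla\theta(\eta,\cdot)| = |\nabla \log|f_\eta'(\cdot)|| \le \tilde C$, contributing a factor $|\ell|\tilde C$. For (c), uniform contraction yields $|f_\eta'(x)| \le \rho^n$. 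Collecting these and noting that $|e^{2\pi s c(\eta,x)}| = e^{2\pi a c(\eta,x)}$, one obtains the pointwise inequality
\begin{align*}
|\nabla P_{s,\ell}^n f(x)| \;\le\;& \tilde C\,(2\pi|s|+|\ell|)\sum_{\eta} p_\eta\, e^{2\pi a c(\eta,x)} |f(f_\eta(x))| \\
&+\;\rho^n \sum_{\eta} p_\eta\, e^{2\pi a c(\eta,x)} |\nabla f(f_\eta(x))|,
\end{align*}
which, after taking $x$-suprema, recognizing the sums as $P_a^n(|f|)$ and $P_a^n(|\nabla f|)$, and using $|s|\le |b|+|a|$ (with $|a|$ small) to absorb $|s|$ into $C_1(|b|+|\ell|)$, gives the first claimed estimate. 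The same argument applied verbatim to $P_{s,\ell,\omega,n}$, whose weights $\eta^{(\omega)}(J)$ over $J\in X_n^{(\omega)}$ also sum to $1$, yields the second.

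For the final operator-norm bound I would combine the above with the trivial estimate $\|P_a^n g\|_\infty \le e^{nD'|a|}\|g\|_\infty$ (which follows from $c(\eta,x)\in[nD,nD']$ via \eqref{Eq C and C prime} together with $\sum_\eta p_\eta = 1$). Using the definition \eqref{Eq norm dol theta} of $\|\cdot\|_{(|b|+|\ell|)}$, the $C^0$ contribution of $P_{s,\ell}^n f$ is at most $e^{nD'|a|}\|f\|_\infty$, while the gradient contribution, after dividing by $|b|+|\ell|$ and using $\|\nabla f\|_\infty \le (|b|+|\ell|)\|f\|_{(|b|+|\ell|)}$, is at most $(C_1+\rho^n)\,e^{nD'|a|}\|f\|_{(|b|+|\ell|)}$. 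Summing produces the desired constant $C_6\,e^{nD'|a|}$. Apart from the Cauchy-Riemann step controlling $|\nabla\theta|$, which is the only genuinely two-dimensional input and the sole place requiring care, the rest is routine bookkeeping.
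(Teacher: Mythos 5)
Your proposal is correct and follows essentially the same approach as the paper, which in fact omits the proof and refers the reader to the one-dimensional analogue \cite[Lemma 2.11]{algom2023polynomial} and to \cite[Lemma 5.1]{Oh2017winter}. Your expansion into the three families of terms, the Cauchy--Riemann identity $|\nabla\arg f_\eta'| = |\nabla\log|f_\eta'||\le\tilde C$ for the angle contribution (the one genuinely planar step, valid since $\log f_\eta'$ admits a branch on the simply connected $D$), the contraction bound $|f_\eta'|\le\rho^n$, and the final passage to the $\|\cdot\|_{(|b|+|\ell|)}$-norm via \eqref{Eq C and C prime} are exactly the intended ingredients.
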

This is similar to \cite[Lemma 2.11]{algom2023polynomial} and \cite[Lemma 5.1]{Oh2017winter}, so we omit the details.

Next, we define cylinders inside our model construction:
\begin{Definition} \label{Def cylinders}
Let  $n\in \mathbb{N}$ and $\omega\in \Omega$. For every $u \in X_n ^{(\omega)}$ we define the corresponding cylinder via
$$C_u := f^{ (\omega_1)} _{u_1} \circ \dots \circ f^{ (\omega_n)} _{u_n} (D) \subseteq D.$$
\end{Definition}
Let us now fix $\omega \in \Omega$ for the subsequent discussion. We will be careful though to derive estimates that are uniform across all $\omega \in \Omega$. Now, note that
\begin{equation} \label{Eq measure of cylinder}
\mu_\omega (C_u) = \mu_\omega ( f^{ (\omega_1)} _{u_1} \circ \dots \circ f^{ (\omega_n)} _{u_n} (D) ) = \eta^{(\omega)} ([u_1,...,u_n]) = \prod_{i=1} ^n \mathbf{p}^{(\omega_i)} _{u_i}.
\end{equation}
Indeed, this follows Theorem \ref{Theorem disint} Part (4),  the definition of $\mu_\omega$, and by \eqref{Eq stochastic self-similarity}.

We denote the diameter of a cylinder set by $|C_\alpha|$.  The following  is similar to \cite[Lemma 6.1]{Naud2005exp}:
\begin{Lemma} \label{Lemma 6.1 Naud}
For some $C>0$ and some $0<r_1,r_2<1$ that are uniform in $\omega$, for every two nesting cylinders $C_\alpha \subseteq C_\beta$ we have
$$C^{-1} r_1 ^{|\alpha|-|\beta|} \leq |C_\alpha|/|C_\beta| \leq C\cdot r_2 ^{|\alpha|-|\beta|}.$$
\end{Lemma}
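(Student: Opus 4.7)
The plan is to reduce both sides of the desired inequality to the complex derivative $|f_\tau'(0)|$ at the base point, and then exploit the chain rule and uniform contraction.

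First, by the separation property in Theorem \ref{Theorem disint}(4), distinct cylinders in $X_n^{(\omega)}$ at the same depth $n$ are disjoint, so the nesting $C_\alpha \subseteq C_\beta$ forces $|\alpha|\geq |\beta|$ and forces $\beta$ to be a prefix of $\alpha$. Write $\alpha = \beta * \gamma$ with $\gamma \in X_{|\alpha|-|\beta|}^{(\sigma^{|\beta|}\omega)}$, and set $f_\beta := f^{(\omega_1)}_{\beta_1}\circ\cdots\circ f^{(\omega_{|\beta|})}_{\beta_{|\beta|}}$ and $f_\gamma := f^{(\omega_{|\beta|+1})}_{\gamma_1}\circ\cdots\circ f^{(\omega_{|\alpha|})}_{\gamma_{|\gamma|}}$, so that $f_\alpha = f_\beta \circ f_\gamma$ and $C_\alpha = f_\beta \circ f_\gamma(D)$, $C_\beta = f_\beta(D)$.

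Second, I would prove the uniform two-sided comparison
\[ c\cdot |f_\tau'(0)| \;\leq\; |C_\tau| \;\leq\; C\cdot |f_\tau'(0)| \]
for every cylinder word $\tau$ arising from the model. The upper bound follows from the mean value inequality \eqref{MVT} together with bounded distortion \eqref{Eq bdd distortion}: indeed $|C_\tau| = \diam(f_\tau(D)) \leq \sup_{v\in D}|f_\tau'(v)|\cdot\diam(D) \leq 2L|f_\tau'(0)|$. For the lower bound, $f_\tau$ is univalent and holomorphic on $D=\overline{B_1(0)}$ (being a composition of injective holomorphic contractions from the original $\Phi$), so the Koebe $1/4$ theorem applied to the normalization $z\mapsto (f_\tau(z)-f_\tau(0))/f_\tau'(0)$ gives $f_\tau(D)\supseteq B(f_\tau(0), |f_\tau'(0)|/4)$, hence $|C_\tau|\geq |f_\tau'(0)|/2$.

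Third, the chain rule gives $|f_\alpha'(0)| = |f_\beta'(f_\gamma(0))|\cdot |f_\gamma'(0)|$, and bounded distortion \eqref{Eq bdd distortion} yields $L^{-1}|f_\beta'(0)| \leq |f_\beta'(f_\gamma(0))| \leq L|f_\beta'(0)|$. Combining with the previous paragraph and then applying the chain rule to $f_\gamma$ itself, each of the $|\gamma|$ derivative factors lies in $[\rho_{\min},\rho]$ (these bounds are uniform across all maps in every $\Phi_j \in I$, since each $\Phi_j\subseteq \Phi$), producing $\rho_{\min}^{|\gamma|} \leq |f_\gamma'(0)| \leq \rho^{|\gamma|}$. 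Taking $r_1 := \rho_{\min}$ and $r_2 := \rho$ and absorbing all the multiplicative constants into a single $C$ yields the claim. Uniformity in $\omega$ is automatic, since $L$, $\rho$, $\rho_{\min}$ are intrinsic to the original IFS $\Phi$.

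The only non-routine ingredient is the lower bound $|C_\tau| \geq c|f_\tau'(0)|$: a direct complex line-integral form of the mean value theorem can suffer from angular cancellation in $\arg f_\tau'$ and does not immediately yield a lower bound, which is why I would invoke Koebe $1/4$. This is precisely the Koebe input the authors highlight after Theorem \ref{Main Theorem analytic} as the reason for restricting to the closed unit disc.
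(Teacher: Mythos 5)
Your proof is correct, and the high-level structure is the same as the paper's: decompose $\alpha = \beta * \gamma$, show that every cylinder satisfies the two-sided comparison $|C_\tau| \asymp |f_\tau'(0)|$ with uniform constants, and then use the chain rule, bounded distortion, and the uniform contraction bounds to conclude. The single point of genuine divergence is how you obtain the lower bound $|C_\tau| \geq c\,|f_\tau'(0)|$: you invoke the Koebe $1/4$ theorem (applied to the normalized map $z \mapsto (f_\tau(z)-f_\tau(0))/f_\tau'(0)$, which is univalent on the open unit disc), whereas the paper applies Cauchy's integral formula for the derivative to $g(z) = f_\tau(z) - f_\tau(0)$ over $\partial D$, giving directly $|f_\tau'(0)| \leq \frac{1}{2\pi}\cdot\mathrm{length}(\partial D)\cdot\sup_{\partial D}|g| \leq |C_\tau|$. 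Both routes are valid and give the comparability you need; Koebe buys nothing extra here since $D$ is already a disc and the constant is absorbed anyway, while Cauchy's formula is arguably more elementary and gives the clean constant $1$. One small misattribution at the end: the paper's stated reason for restricting to $D=\overline{B_1(0)}$ is the need for well-defined branches of $\log f'$ (e.g.\ in Claim~\ref{Claim UNI}), not the Koebe theorem; Koebe is mentioned only as a tool from Leclerc's work that might help \emph{relax} that restriction to simply connected domains.
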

\begin{proof}
Put $\alpha=\beta* u$ assuming $|u|=|\alpha|-|\beta|$. Then,  by \eqref{MVT} for some $x_1 \in D$
$$\left| C_\alpha \right| = |f_\beta \circ f_u (D)| \leq  \left|f_\beta ' \circ f_u(x_1) \cdot f_u' (x_1)\right|\cdot 2.$$
Similarly, by \eqref{MVT},  for some $y_1$ we have
$$\left| C_\beta \right| = |f_\beta(D)| \leq  \left|f_\beta ' (y_1) \right|\cdot 2.$$
In addition, by Cauchy's integral formula for derivatives,
$$ \left| f_\alpha'(0) \right|\leq \left| C_\alpha \right|,\, \text{ and }  \left| f_\beta'(0) \right|\leq \left| C_\beta \right|.$$ 
Indeed, invoking Cauchy's formula for the map $g(x)=f_\alpha(x)-f_\alpha(0)$ we obtain
$$f_\alpha'(0) = g'(0)= \frac{1}{2\pi i} \int_{\partial D} \frac{g(z)}{z^2}\,dz.$$
So,
$$\left| f_\alpha'(0) \right|  \leq \frac{1}{2\pi} \cdot \text{length}(\partial D)\cdot \sup_{z\in \partial D} |g(z)| \leq |C_\alpha|.$$
Note the use of the relation $g(x)=f_\alpha(x)-f_\alpha(0)$ in the last inequality.

Recall the definition of $L$ from \eqref{Eq bdd distortion}. 
So, since $f_\beta$ is a composition of functions from $\Phi$,
$$\frac{1}{2}\cdot  L^{-1} \left( \min_{i\in I, f \in \Phi^{(i)}} \min_{x\in D} \left| f_i ' (x)\right| \right)^{|u|} \leq \frac{1}{2}\cdot  L^{-1} \cdot \left|f_u' (0)\right| \leq  \frac{|C_\alpha|}{|C_\beta|} \leq  2\cdot L \cdot \left|f_u' (x_1)\right|\leq  2\cdot L\cdot \rho^{|u|},$$
as claimed.
\end{proof}

We also require the following Lemma.
\begin{Lemma} \label{Lemma 7.1 Nuad}
There is some uniform constant $B_1$ so that for every $x\in K_\omega$ and every $r>0$, there is a cylinder $C_\alpha = f_\alpha(D)$ so that
$$C_\alpha \subseteq B_r(x) \text{ and we have } B_1  r \leq |C_\alpha|.$$
\end{Lemma}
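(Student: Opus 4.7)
The strategy is a routine stopping-time argument along a coding of $x$, with Lemma~\ref{Lemma 6.1 Naud} providing the crucial one-step diameter ratio bound. The key point is that the constants $C$ and $r_1$ appearing there are uniform in $\omega$, which is what will make $B_1$ uniform as well.

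First I would fix some $u\in X_\infty^{(\omega)}$ with $\Pi_\omega(u)=x$, and consider the nested chain of compact cylinders $\{C_{u|_n}\}_{n\geq 1}$ from Definition~\ref{Def cylinders}. Each $C_{u|_n}$ contains $x$, and by the uniform contraction of $\Phi$ their diameters tend to $0$ as $n\to\infty$. Given $r>0$, we may restrict to $r\leq 2|D|$, since for larger $r$ the conclusion is trivial with a single-letter cylinder (whose diameter is bounded below by a uniform constant via Lemma~\ref{Lemma 6.1 Naud} applied with $|\beta|=0$). In that range I would then let $n\geq 2$ be the smallest integer such that $|C_{u|_n}|\leq r/2$; such $n$ exists by the above remark, and by minimality $|C_{u|_{n-1}}|>r/2$.

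Applying Lemma~\ref{Lemma 6.1 Naud} to the one-step nested pair $C_{u|_n}\subseteq C_{u|_{n-1}}$ (so $|\alpha|-|\beta|=1$) then yields
$$|C_{u|_n}|\;\geq\;C^{-1}r_1\cdot |C_{u|_{n-1}}|\;>\;\tfrac{C^{-1}r_1}{2}\cdot r.$$
Since $x\in C_{u|_n}$ and $\diam(C_{u|_n})\leq r/2<r$, every point of $C_{u|_n}$ lies within distance $r$ of $x$, giving $C_{u|_n}\subseteq B_r(x)$. Taking $B_1:=C^{-1}r_1/2$ (or smaller, to absorb the single-letter trivial regime) then produces the desired cylinder. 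I do not expect a substantive obstacle here: the only point requiring care is the $\omega$-uniformity of the one-step ratio, but that is precisely the content of Lemma~\ref{Lemma 6.1 Naud}, so the same $B_1$ works across all $\omega\in\Omega$.
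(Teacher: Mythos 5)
Your proposal is correct and takes essentially the same approach as the paper: a stopping/minimality argument along the nested chain of cylinders containing $x$, combined with the one-step diameter ratio bound from Lemma~\ref{Lemma 6.1 Naud} (whose $\omega$-uniformity is exactly what makes $B_1$ uniform). The only cosmetic difference is the stopping rule — you stop at the first cylinder of diameter $\leq r/2$, whereas the paper stops at the first cylinder contained in $B_r(x)$ and observes that the parent must then have diameter $\geq r$ — but both reduce to the same comparison against the parent cylinder.
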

\begin{proof}
Let $C_\alpha$ be  a cylinder such that $x\in C_\alpha\subseteq B_r(x)$, where $|\alpha|$ is assumed to be minimal. Since $x\in K_\omega$ such a cylinder does indeed exist. Now put $C_\alpha \subseteq C_\beta$ where $|\beta|=|\alpha|-1$. By the minimality of $|\alpha|$ it is impossible that $C_\beta$ is a subset of $B_r(x)$, and thus $\left| C_\beta \right| \geq r$. Invoking Lemma \ref{Lemma 6.1 Naud} we thus see that
$$C^{-1} r_1 r \leq C^{-1} r_1 |C_\beta| \leq |C_\alpha|$$
as claimed.  
\end{proof}

Finally, we require the following Lemma, which is a version of \cite[Lemma 3.2]{Oh2017winter}. It is a reformulation on the (UNI) condition.  For a $C^1$ map $h$ regarded as a map $\mathbb{R}^2 \rightarrow \mathbb{R}^2$ we define its $C^1$ norm $\vert | h \vert|_{C^1}$ as $||h||_\infty +  || \nabla h(u) ||_\infty$, usually restricted to $D$. We define its $C^2$ norm as the maximum of its $C^1$ norm and the supremum of $X^2 h$ as $X$ ranges over all unit tangent vectors.

\begin{Lemma}  (Reformulation of UNI) \label{Lemma 3.2 Oh}
Let $m',m>0$ and $N_0\geq 0$ be such that for all $N\geq N_0$, for every $\omega \in \Omega$ there exist $\alpha_1 ^N, \alpha_2 ^N \in X_N ^{(\omega)}$ such that
 
\begin{equation} \label{Eq condition}
m \leq \left|\nabla  \left(  \log \left| f_{\alpha_1 ^N } ' \right| - \log \left| f_{\alpha_2 ^N } ' \right| \right) \left(x\right) \right| \leq m', \quad \text{ for all } x\in D.
\end{equation}
Then there exists some $\delta_2>0$ such that for all $N\geq N_0$, the map
\begin{equation} \label{Eq 3.3 Naud}
T_N:= \left(  \left(  \log \left| f_{\alpha_1 ^N } ' \right| - \log \left| f_{\alpha_2 ^N } ' \right| \right) ,\,  \left(  \arg f_{\alpha_1 ^N } '  - \arg  f_{\alpha_2 ^N } '  \right) \right):D \rightarrow \mathbb{R}\times \mathbb{T}
\end{equation}
is a local diffeomorphism satisfying $\vert \vert T_N \vert \vert_{C^2} < \frac{1}{2 \delta_2}$ and
$$ \inf_{ u \in D} \left|  \nabla T_N (u)  \cdot v \right| \geq \delta_2 \cdot |v|,\quad \forall v\in \mathbb{R}^2.$$
\end{Lemma}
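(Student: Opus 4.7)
\medskip

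\noindent\textbf{Proof plan.} The plan is to identify $T_N$ with the real and imaginary parts of a single holomorphic function and to convert the hypothesis, which constrains only the real part, into control of the full complex derivative via the Cauchy--Riemann equations. Concretely, since $D = \overline{B_1(0)}$ is simply connected and each $f_i$ is a holomorphic contraction with non-vanishing derivative, every composition $f_{\alpha_i^N}$ admits a holomorphic branch of $\log f_{\alpha_i^N}'$ on $D$. Define
\[
F_N := \log f_{\alpha_1^N}' - \log f_{\alpha_2^N}',
\]
a holomorphic function on $D$. Then $\mathrm{Re}(F_N) = \log|f_{\alpha_1^N}'| - \log|f_{\alpha_2^N}'|$ and $\mathrm{Im}(F_N) = \arg f_{\alpha_1^N}' - \arg f_{\alpha_2^N}'$, so that $T_N = (\mathrm{Re}(F_N), \mathrm{Im}(F_N))$.

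For a holomorphic map $F = u + iv$, the Cauchy--Riemann equations imply that the real Jacobian $\nabla F$ at each point acts on $\mathbb{R}^2$ as a scalar multiple of a rotation of factor $|F'|$; in particular $|\nabla T_N(z)\, v| = |F_N'(z)|\cdot |v|$ for every $v \in \mathbb{R}^2$, and moreover $|\nabla \mathrm{Re}(F_N)(z)| = |F_N'(z)|$. Combined with the hypothesis \eqref{Eq condition}, this gives
\[
m \,\leq\, |F_N'(z)| \,\leq\, m' \quad \text{for every } z \in D,
\]
so choosing $\delta_2$ below $m$ (and below $1/(2\|T_N\|_{C^2})$ from the next paragraph) yields the lower bound on $|\nabla T_N \cdot v|$ and shows $T_N$ is a local diffeomorphism.

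It remains to produce a $C^2$-bound on $T_N$ that is uniform in $N$ and $\omega$. The $C^0$ bound is automatic once we normalize (e.g.\ subtract $T_N(0)$), and the $C^1$ bound $\|\nabla T_N\|_\infty \leq m'$ has just been established. For the second derivatives, since each $f_i \in C^\omega(D)$ extends holomorphically to some fixed neighborhood $B_{1+\varepsilon}(0)$ of $D$, the uniform contraction property allows (after possibly shrinking $\varepsilon$) every composition $f_{\alpha_i^N}$ to be continued holomorphically on $B_{1+\varepsilon}(0)$, with uniformly bounded $|F_N'|$ there by the same Cauchy--Riemann argument applied to the uniform estimate \eqref{Eq tilde C} on the enlarged disc. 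Then Cauchy's integral formula on circles of radius $\varepsilon/2$ centered at points of $D$ yields a uniform bound on $|F_N''|$, hence on all second partial derivatives of $T_N$. Setting $\delta_2$ equal to the minimum of $m$ and half the reciprocal of this $C^2$ bound completes the argument.

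The main obstacle is precisely the uniform $C^2$ control in the last paragraph: bounds on $|F_N'|$ alone give only $C^1$ information, and any extraction of second-derivative bounds from first-derivative bounds inherently uses holomorphicity (via Cauchy's formula on an enlarged domain). This is the step that forces the $C^\omega$ assumption in the present planar setting, in contrast with the one-dimensional situation where a $C^2$ non-linearity hypothesis sufficed.
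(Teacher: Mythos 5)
Your proof is correct, and the first half (identifying $T_N$ with the real and imaginary parts of the holomorphic function $F_N=\log f_{\alpha_1^N}'-\log f_{\alpha_2^N}'$, then using Cauchy--Riemann to convert the gradient bound on $\operatorname{Re} F_N$ into the full Jacobian estimate $|\nabla T_N(z)\,v|=|F_N'(z)|\,|v|$) is essentially the same as the paper's. In fact your version is slightly stronger: you note that the Jacobian is a conformal matrix, giving the lower bound $|\nabla T_N\cdot v|\geq m|v|$ directly, whereas the paper only computes the determinant $\det\nabla T_N=|F_N'|^2$ and then applies \eqref{Eq condition}.

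For the $C^2$ bound you take a genuinely different route. The paper runs a two-stage normal-families argument: first fixing $\omega$, extracting a subsequence in $N$, and using pre-compactness (Montel, justified by \eqref{Eq condition} together with the normalization in the footnote) to conclude $s_\omega:=\sup_N\|\log h_{N,\omega}\|_{C^2}<\infty$; then repeating over $\omega$ to get $\sup_\omega s_\omega<\infty$. You instead extend all the $f_{\alpha_i^N}$ holomorphically to a fixed larger disc $\overline{B_{1+\varepsilon'}(0)}$ (which the uniform contraction and finiteness of $\Phi$ allow), obtain a uniform first-derivative bound $|F_N'|\leq 2\tilde C'$ there, and then invoke Cauchy's integral formula on circles of radius $\varepsilon'/2$ to bound $|F_N''|$ on $D$. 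This is cleaner in that the bound is explicitly uniform in both $N$ and $\omega$ from the start, with no subsequence extraction, and it exposes the mechanism (Cauchy's estimate on a larger domain) that the paper's Montel argument hides. Both approaches hinge on analyticity in the same way, and both must handle the $C^0$ normalization (subtracting $T_N(0)$), which you and the paper's referee footnote both address.

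The one step you should spell out more carefully is the claim that ``the uniform estimate \eqref{Eq tilde C}'' applies ``on the enlarged disc.'' As stated, \eqref{Eq tilde C} is a bound over $x\in D$ only. To get your uniform bound on $|F_N'|$ on $\overline{B_{1+\varepsilon'}(0)}$, you need to rerun the telescoping/bounded-distortion argument there: check that $f_i(\overline{B_{1+\varepsilon'}(0)})\subseteq\overline{B_{1+\varepsilon'}(0)}$ for small $\varepsilon'$ (which follows from $\|f_i'\|_\infty<1$ on the slightly larger disc, by continuity), that $\|f_i'\|_\infty\leq\rho'<1$ and $\inf|f_i'|>0$ persist there, and that the resulting geometric series bound gives a possibly larger constant $\tilde C'$ in place of $\tilde C$. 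This is routine but is not automatic from the statement of \eqref{Eq tilde C}, so it deserves a sentence.
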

Note that the existence of the parameters as in \eqref{Eq condition} follows from Claim \ref{Claim UNI}.
\begin{proof}
Let $\alpha_1 ^N, \alpha_2 ^N \in X_N ^{(\omega)}$ be as in the statement of the Lemma. Then  $h= f_{\alpha_1 ^N} '/f_{\alpha_2 ^N} '$  is holomorphic. Furthermore, we can write 
$$h =\exp \left(  \log \left| f_{\alpha_1 ^N } ' \right| - \log \left| f_{\alpha_2 ^N } ' \right| + i\left(  \arg f_{\alpha_1 ^N } '  - \arg  f_{\alpha_2 ^N } '  \right) \right).$$
By \eqref{Eq condition}, 
$$\left( \log h \right)'\neq 0 \text{ on } D.$$
 Thus, for every $x\in D$ the map $\log h$ defines a local diffeomorphism from a neighbourhood of $x$ to some open subset of $\mathbb{C}$. Furthermore, we have global estimates about the Jacobian of $T_N$  since $\log h$ is holomorphic  and so
$$ \det  \nabla T_N   = \left| \frac{d}{dx} \log h \right|^2 =  \left|\nabla  \left(  \log \left| f_{\alpha_1 ^N } ' \right| - \log \left| f_{\alpha_2 ^N } ' \right| \right) \left(x\right) \right|^2.$$ 
We can now simply apply \eqref{Eq condition}.

As for the $C^2$ norm, we use a compactness argument: Let us denote the function $h$ as above by $h_{N,\omega}$. Let
$$s_\omega := \sup_{N} ||\log h_{N,\omega}||_{C^2}.$$
Then there exists a subsequence $N_k$ such that $||\log h_{N_k,\omega} ||_{C^2}\rightarrow s_\omega$. Note that\footnote{Here, if needed, we can also move to a smaller open neighbourhood of the attractor in $D$. Alternatively, as pointed out by the referee,  one can argue more directly by replacing $h_{N,\omega}$ with $h_{N,\omega}/h_{N,\omega}(x_0)$ for some fixed $x_0$.} $s_\omega <\infty$ for all $\omega$  since the family of homomorphic functions $\log h_{N_k,\omega}$ is pre-compact by \eqref{Eq condition}. This also implies that there exists a holomorphic function $H_\omega$  such that
\begin{equation} \label{Eq precompact}
||H_\omega ||_{C^2} = s_\omega,\quad \text{ and } \sqrt{m} \leq |H_\omega ' (x)|\leq \sqrt{m'}.
\end{equation} 
Finally, let
$$S = \sup_\omega s_\omega.$$
Then there exists a subsequence $N_\ell$ such that $s_{\omega_\ell} \rightarrow S.$ By \eqref{Eq precompact} there is a further subsequence $N_{\ell_k}$ such that $||H_{\omega_{N_{\ell_k}}} ||_{C^2} \rightarrow S$. Since the family $\lbrace H_\omega \rbrace_{\omega \in \Omega}$ is precompact by \eqref{Eq precompact}, there exists a holomorphic function $H$ such that 
$$||H||_{C^2}= S.$$
Unfolding the definition of $S$ we get the assertion about the $C^2$ norm, and the Lemma follows.  
\end{proof}

\subsubsection{Construction of the Dolgopyat operators}\label{Section construction}
Fix $\omega$ and let $s=a+ib\in \mathbb{C}$ and $\ell \in \mathbb{Z}$. In this subsection we construct the Dolgopyat operators from Lemma \ref{Lemma 5.4 Naud}. Our construction parallels that of Oh-Winter \cite[Section 5.3]{Oh2017winter}, but is significantly different due to our model self-conformal setting. Let $N\in \mathbb{N}$ be sufficiently large in the sense of Lemma \ref{Lemma 3.2 Oh}; more conditions on the minimal length of $N$ will be given later. Let $\alpha_1 ^N, \alpha_2 ^N \in X_N ^{(\omega)}$ be  length $N$ words such that the outcome of Lemma \ref{Lemma 3.2 Oh} is satisfied.

Let us fix $\epsilon_1 ,\frac{1}{ |b|+|\ell|}>0$  uniformly in $\omega$, that are small  (to be determined later). Let 
$$(B_i = B_{\tilde{\epsilon}} (x_i))_{1\leq i\leq q},\quad x_i\in K_{\sigma^N \omega}$$
be a Vitali covering  as in Lemma \ref{Lemma Vitali} of $K_{\sigma^N \omega}$ of modulus
$$\tilde{\epsilon}=\frac{\epsilon_1}{|b|+|\ell|}.$$
 For all $(i,j)\in \lbrace 1,2\rbrace \times \lbrace 1,...,q\rbrace$ set
\begin{equation} \label{Eq wi and wi hat}
w_i = b\cdot \nabla \left(  \log \left| f_{\alpha_1 ^N } ' \right| - \log \left| f_{\alpha_2 ^N } ' \right| \right)(x_i)+\ell\cdot \nabla \left( \arg f_{\alpha_1 ^N } '  - \arg  f_{\alpha_2 ^N } '  \right)(x_i),\quad \text{ and } \hat{w_i}=\frac{w_i}{|w_i|}.
\end{equation}
By Lemma \ref{Lemma 3.2 Oh}, assuming $\tilde{\epsilon}$ is small enough,
\begin{equation} \label{Eq 5.7 Oh}
|w_i|> \frac{\delta_2\cdot \left(|b|+|\ell|\right) }{2}.
\end{equation}
Let $\delta_1$ be the $\delta$ given by Theorem \ref{Theorem disint} part (6). Then by this result and Lemma \ref{Lemma 7.1 Nuad} (which is where $B_1$ comes from), for each $i$ there exists a point $y_i \in B_{ 5\tilde{\epsilon}} (x_i) \cap K_{\sigma^N \omega}$ such that 
\begin{equation} \label{Eq 5.7 oh 2}
\left| x_i-y_i \right| \geq \left| \left\langle x_i-y_i,\,\hat{w_i} \right\rangle \right| > \delta_1 \cdot L\cdot B_1 \cdot 5\tilde{\epsilon}.
\end{equation}
We impose a first restriction on $\epsilon_1$, that $\epsilon_1\cdot  L \cdot  B_1 <1/2$.

For each $x\in \mathbb{C}$ and $\epsilon>0$ we pick a smooth bump function $\psi_{x,\epsilon}\in C^\infty(\mathbb{C},\, \mathbb{R})$ such that:
\begin{enumerate}
\item $\psi_{x,\epsilon} \equiv 0$ on $\mathbb{C}\setminus B_\epsilon(x)$.

\item $\psi_{x,\epsilon} \equiv 1$ on $B_{\epsilon/2} (x)$.

\item $||\psi_{x,\epsilon}||_{C^1} \leq  \frac{4}{\epsilon}.$
\end{enumerate}

We can now define our parameter space as
$$J_{s, \ell, \omega} := \lbrace 1,2\rbrace \times \lbrace 1,2\rbrace \times \lbrace 1,...,q\rbrace.$$
We identify $(i,k,j) \in J_{s, \ell, \omega}$ as follows: The first coordinate $i$ will correspond to $f_{\alpha_i ^N}$. The last coordinate is some $j\in \lbrace 1,...,q\rbrace$. The second coordinate $k$ corresponds to $x_j$ if $k=1$ and to $y_j$ if $k=2$. This will help clarify the following definition.

Fix $0< \theta <1$, $A>1$ that will be determined along the proof. For any $\emptyset \neq J \subseteq J_{s, \ell, \omega}$ we define a function $\chi_J$ on $D$ via, for $\delta_3:= \frac{\delta_1 \cdot  \delta_2}{4 A}$, 
\[
  \chi_J =
  \begin{cases}
    1- \theta \cdot \left(\sum_{(1,1,j)\in J} \psi_{x_j, 2 \delta_3 \tilde{\epsilon} } \circ f_{\alpha_1 ^N} ^{-1} + \sum_{(1,2,j)\in J} \psi_{y_j, 2 \delta_3 \tilde{\epsilon} } \circ f_{\alpha_1 ^N} ^{-1}  \right)  & \text{on $f_{\alpha_1 ^N}(D)$} \\
    1- \theta \cdot \left(\sum_{(2,1,j)\in J} \psi_{x_j, 2 \delta_3\tilde{\epsilon} } \circ f_{\alpha_2 ^N} ^{-1} + \sum_{(2,2,j)\in J} \psi_{y_j, 2 \delta_3\tilde{\epsilon} } \circ f_{\alpha_2 ^N} ^{-1}  \right)  & \text{on $f_{\alpha_2 ^N}(D)$} \\
    1 & \text{otherwise}
  \end{cases}
\]
Note that by Theorem \ref{Theorem disint} Part (4) this is well defined.

Our Dolgopyat operators $N_{s,\ell} ^J$ on $C^1 (D)$ are defined as
$$N_{s,\ell} ^J \left( f \right)(x):= P_{a,0,\omega, N} \left( \chi_J \cdot f \right), \, J\in J_{s, \ell, \omega}.$$
In the next three subsection we define our parameters, and prove the three parts of Lemma \ref{Lemma 5.4 Naud}.
\subsubsection{Proof of Part 1 of Lemma \ref{Lemma 5.4 Naud}}
Let us retain the notations of the previous section. Here, we prove Lemma \ref{Lemma 5.4 Naud} Part (1). Since we have Lemma \ref{Lemma iterations} at our disposal, the proof of the following Lemma is very similar to \cite[Lemma 2.17]{algom2023polynomial}. However, we give full details here since we need to indicate how to pick the  parameters for Lemma \ref{Lemma 5.4 Naud}. We remark that \cite[Theorem 5.6 part (1)]{Oh2017winter} is the corresponding result in the work of Oh-Winter.
\begin{Lemma} \label{Lemma cone}
There is some $A>1$, an integer $N>0$, and some $0<\theta <1$ such that for all $s=a+ib$ where $|a|$ is sufficiently small and $|b|+|\ell|$ is sufficiently large, for all $\omega$,  
\begin{enumerate}
\item The operator $N_{s,\ell} ^J$ leaves the cone $\mathcal{C}_{A(|b|+|\ell|)}$ invariant.

\item For every $f\in C^1(D)$ and $H\in \mathcal{C}_{A(|b|+|\ell|)}$ such that
$$ |f|\leq H \text{ and } \left|\nabla f \right| \leq A(|b|+|\ell|)H$$
we have
$$\left| \nabla \left( P_{s,\ell, \omega,N} \left( f \right) \right) (x) \right| \leq  A(|b|+|\ell|) N_{s,\ell} ^J (H) (x).$$

\item For every $H\in \mathcal{C}_{A(|b|+|\ell|)}$ we have $P_{a,0,\omega,N} \left(|H|^2 \right) \in \mathcal{C}_{ \frac{3}{4}A(|b|+|\ell|)}$.
\end{enumerate}
\end{Lemma}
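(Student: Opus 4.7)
The proof plan follows a uniform template for all three parts: differentiate the relevant transfer-operator sum under the sign, collect three types of contributions, and bound each by a multiple of the function itself. The three contributions are: the derivative of the amplitude $e^{2\pi ac(J,x)}$, which is bounded by $2\pi|a|\tilde{C}$ via \eqref{Eq tilde C} and is negligible since $|a|$ is small; the chain-rule factor $|f_J'(x)|\leq\rho^N$ that multiplies every gradient falling on an inner function; and, where present, the gradient of the cutoff $\chi_J$, controlled by $|\nabla\chi_J|\leq\tfrac{2\theta}{\delta_3\tilde\epsilon}$ because the balls $B_i$ of the Vitali covering are disjoint, so at any point at most one bump $\psi_{\cdot,2\delta_3\tilde\epsilon}$ is active. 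Positivity of $N_{s,\ell}^J H$ is automatic since $\chi_J\geq 1-\theta>0$, $e^{2\pi ac}>0$, and $H>0$.

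For Part (1), using $H\leq(1-\theta)^{-1}\chi_J H$ pointwise and the hypothesis $|\nabla H|\leq A(|b|+|\ell|)H$ gives $|\nabla(\chi_J H)|\leq\left(\tfrac{2\theta}{\delta_3\tilde\epsilon(1-\theta)}+\tfrac{A(|b|+|\ell|)}{1-\theta}\right)\chi_J H$. Combining with the amplitude term and using $\tilde\epsilon=\epsilon_1/(|b|+|\ell|)$, I obtain
$$|\nabla(N_{s,\ell}^J H)(x)|\leq\Bigl[2\pi|a|\tilde{C}+\rho^N\Bigl(\tfrac{2\theta}{\delta_3\epsilon_1(1-\theta)}+\tfrac{A}{1-\theta}\Bigr)(|b|+|\ell|)\Bigr]N_{s,\ell}^J H(x),$$
which is at most $A(|b|+|\ell|)N_{s,\ell}^J H(x)$ once $|a|$ is small and $N$ is large enough that the bracket is $\leq A(|b|+|\ell|)$. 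Part (3) is analogous but simpler since there is no cutoff: the pointwise inequality $|\nabla|H|^2|\leq 2|H||\nabla H|\leq 2A(|b|+|\ell|)|H|^2$ yields $|\nabla P_{a,0,\omega,N}(|H|^2)|\leq\bigl(2\pi|a|\tilde{C}+2A\rho^N(|b|+|\ell|)\bigr)P_{a,0,\omega,N}(|H|^2)$, and choosing $N$ so that $2\rho^N<3/4$ closes the estimate.

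Part (2) differs because $P_{s,\ell,\omega,N}$ carries the full twist $e^{2\pi ibc}\chi_\ell(e^{i\theta})$ and no cutoff is present inside the sum. The gradient of the twist contributes a factor $C_1(|b|+|\ell|)$ as in Lemma \ref{Lemma iterations}, while $|\nabla f|\leq A(|b|+|\ell|)H$ composed with the chain rule contributes $\rho^N A(|b|+|\ell|)$; both multiply $|f|\leq H\leq(1-\theta)^{-1}\chi_J H$, so after summing with the $\mathbf{p}$-weights they are absorbed into a multiple of $P_{a,0,\omega,N}(\chi_J H)=N_{s,\ell}^J H$. The right-hand coefficient is $A(|b|+|\ell|)$ provided $(C_1+A\rho^N)/(1-\theta)\leq A$, i.e.\ $C_1\leq A(1-\theta-\rho^N)$.

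The main obstacle is getting the order of parameter choices right, since $\delta_3=\delta_1\delta_2/(4A)$ depends on $A$, $\epsilon_1$ is constrained by $\epsilon_1 LB_1<1/2$ and by Lemma \ref{Lemma 3.2 Oh}, and $\theta$ must be small enough that $1-\theta$ is bounded away from zero but not so small that the eventual $L^2$-contraction gain in Lemma \ref{Lemma 5.4 Naud} vanishes. The correct order is: fix $\theta\in(0,1/2)$ small; then fix $\epsilon_1$ small (depending on $\theta$, $\delta_2$, $B_1$, $L$); then take $A$ large compared with $C_1/(1-\theta)$; and finally take $N$ so large that $\rho^N\cdot\bigl(A+\tfrac{\theta}{\delta_3\epsilon_1}\bigr)\leq A(1-\theta)$. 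Uniformity in $\omega\in\Omega$ at each step is inherited from the uniform bounds in Theorem \ref{Theorem disint}.
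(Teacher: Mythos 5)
Your overall template—differentiate under the sum, separate the amplitude, chain-rule, and cutoff contributions, and then tune parameters—is exactly the paper's approach. However, there is a genuine error in the cutoff term that propagates into an incorrect parameter ordering.

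The bound $|\nabla\chi_J|\leq \tfrac{2\theta}{\delta_3\tilde\epsilon}$ is wrong. On $f_{\alpha_i^N}(D)$ the cutoff is $\chi_J = 1 - \theta\sum\psi_{\,\cdot\,,2\delta_3\tilde\epsilon}\circ f_{\alpha_i^N}^{-1}$, so $\nabla\chi_J$ carries the factor $Df_{\alpha_i^N}^{-1}$, which is of size $\rho^{-N}$ (up to bounded distortion). You have computed as if $\chi_J$ were $1-\theta\sum\psi$ with no inverse composition. When the operator's chain rule then contributes $Df_{\alpha_i^N}(x)$, these two Jacobians cancel \emph{exactly}: $(\nabla\chi_J)(f_{\alpha_i^N}(x))\cdot Df_{\alpha_i^N}(x) = -\theta\sum(\nabla\psi)(x)$. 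Hence the cutoff contribution to $\nabla\bigl((\chi_J H)\circ f_{\alpha_i^N}\bigr)$ has \emph{no} $\rho^N$ gain; your bracket should read
\begin{equation*}
2\pi|a|\tilde C \;+\; \frac{2\theta(|b|+|\ell|)}{\delta_3\epsilon_1(1-\theta)} \;+\; \rho^N A(|b|+|\ell|),
\end{equation*}
with $\rho^N$ only on the $\nabla H$ piece. After dividing by $A(|b|+|\ell|)$ and inserting $\delta_3 = \delta_1\delta_2/(4A)$, the cutoff constraint becomes $\tfrac{8\theta}{\delta_1\delta_2\epsilon_1(1-\theta)} \leq \text{const} < 1$, which is independent of both $N$ and $A$ and forces $\theta \lesssim \delta_1\delta_2\epsilon_1$.

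This invalidates the parameter ordering you propose. Because you attached a spurious $\rho^N$ to the cutoff term, you concluded that taking $N$ large resolves the tension between $\theta$ and $\epsilon_1$, permitting the order $\theta \to \epsilon_1 \to A \to N$. In fact, $\theta$ must be chosen \emph{after} $\epsilon_1$ (and after $A$, through $\delta_3$); the paper's additional condition $2\theta\leq (\delta_1\delta_2\epsilon_1)^2/128$ reflects exactly this. With your stated order—$\theta$ fixed first, $\epsilon_1$ then taken small—the quantity $\theta/(\delta_1\delta_2\epsilon_1)$ blows up and the desired inequality $|\nabla N_{s,\ell}^J H| \leq A(|b|+|\ell|) N_{s,\ell}^J H$ fails. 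The rest of your argument (Parts (2) and (3), positivity of $N_{s,\ell}^J H$) is sound and matches the paper once this accounting is corrected.
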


\begin{proof}
Let $H\in \mathcal{C}_{A(|b|+|\ell|)}$, where conditions on $A$ will be given later. For every $x\in D$,
$$\left| \nabla N_{s, \ell} ^J \left( H \right) (x) \right| = \left| \nabla P_{a,0,\omega,N} \left( \chi_J \cdot H \right)(x) \right|$$
$$ \leq \sum_{I\in X_N ^{(\omega)}} e^{a\cdot 2 \pi \cdot  c(I,x)} \left| a 2 \pi\cdot \nabla \left( \log f_I ' \right)(x) \right|  \eta^{(\omega)}(I) \left(H\cdot \chi_J \right)\circ f_I (x)$$
$$ + e^{a\cdot 2 \pi \cdot  c(I,x)} \eta^{(\omega)}(I)  \left|\nabla  \left( H\cdot \chi_J  \right) \circ f_I  (x) \right|.$$
By Theorem \ref{Theorem disint} Part (4) for all $I\in X_N ^{(\omega)}$ 
$$\left| \nabla \left(\chi_J  \circ f_{I} \right) \right| \leq 4\cdot \theta\cdot  \frac{|b|+|\ell|}{\epsilon'} \cdot 2\delta_3.$$
By \eqref{Eq tilde C} we can assume $\tilde{C}$ satisfies 
$$\left| a 2 \pi\cdot \nabla \left( \log |f_{I} '| \right) (x) \right|  \leq \tilde{C},$$
uniformly in $N$, and assuming $|a|$ is small enough.
Thus,
$$\left| \nabla N_{s, \ell} ^J \left( H \right) (x) \right| \leq \sum_{I\in X_N ^{(\omega)}} e^{a\cdot 2 \pi \cdot  c(I,x)}  \eta^{(\omega)}(I)) \left(H\cdot \chi_J  \right)\circ f_I (x) \cdot \tilde{C}$$
$$  + e^{a\cdot 2 \pi \cdot  c(I,x)} \eta^{(\omega)}(I)  \left(H\cdot \chi_J \right)\circ f_I  (x) \cdot  A \cdot \left( |b| +|\ell| \right) \cdot  \rho^N+ e^{a\cdot 2 \pi \cdot  c(\alpha_1 ^N,x)} \eta^{(\omega)}(I)  H \circ f_{I}  (x) \cdot 4\cdot \theta\cdot  \frac{|b|+|\ell|}{\epsilon'} \cdot 2\delta_3.$$
Since $H = \frac{ \left(\chi_J  H\right) }{\chi_J }\leq \frac{1}{1-\theta}\chi_J H$ we have
$$ \left|\nabla N_{s, \ell} ^J \left( H \right) (x) \right| \leq \left( \frac{\tilde{C}}{|b|+|\ell|}+\frac{\theta\cdot 8\delta_3}{(1-\theta)\epsilon'}+A \rho^{N} \right) (|b|+|\ell|) N_{s,\ell} ^J \left( H \right) (x) \leq A(|b|+|\ell|)N_{s,\ell} ^J \left( H \right) (x),$$
if $|b|+|\ell|$ is large enough, $|a|$ is sufficiently small, and 
$$\theta \leq \min \left( \frac{1}{2},\, \epsilon_1 \frac{A-1}{4\cdot 4\cdot \delta_3} \right) \text{ and } \rho^N \leq \frac{A-1}{2A}.$$

The previous estimate works for all $A>1$. Let us now suppose $f\in C^1(D)$ and $H\in \mathcal{C}_{A(|b|+|\ell|)}$ are such that
$$ |f|\leq H \text{ and } \left| \nabla f \right| \leq A(|b|+|\ell|)H.$$
We have
$$\left| \nabla P_{s, \ell, \omega, N} \left( f \right) (x) \right|  \leq \sum_{I\in X_N ^{(\omega)}} e^{a\cdot 2 \pi \cdot  c(I,x)} \left| (a+ib) 2 \pi\cdot \nabla \left( \log |f_I '| \right) (x) + \ell\cdot i \cdot \nabla \arg f_I '(x) \right|  \eta^{(\omega)}(I) H\circ f_I (x)$$
$$ + e^{a\cdot 2 \pi \cdot  c(I,x)} \eta^{(\omega)}(I) \left| \nabla \left( f\circ f_I \right) (x) \right|$$
$$ \leq \sum_{I\in \mathcal{A}^N} e^{a\cdot 2 \pi \cdot  c(I,x)}   \eta(I) H\circ f_I (x)\cdot \hat{C}\cdot (|b|+|\ell|) + e^{a\cdot 2 \pi \cdot  c(I,x)} \eta(I) H\circ f_I (x) \cdot A(|b|+|\ell|) \rho^N,$$
where the constant $\hat{C}$ does not depend on $N$, and $|b|+|\ell|$ is large enough (this is similar to Lemma \ref{Lemma 3.2 Oh} and \eqref{Eq tilde C}). Since $\chi_J ^{-1} \leq 2$ when  $\theta \leq \frac{1}{2}$, in this case $H\leq 2\chi_J  H$. Therefore, making this assumption
$$\left| \nabla P_{s, \ell, \omega, N} \left( f \right) (x) \right|  \leq A(|b|+|\ell|) N_{s,\ell} ^J \left( H \right) (x)$$
assuming $A \geq 4\hat{C}$ and $\rho^N \leq \frac{1}{4}$.

Finally, we can make our choices: $A\geq \max \left( 2, 4\hat{C} \right)$ , and  $N$ is large enough so  $\rho^N \leq \min \left( \frac{A-1}{2A}, \frac{1}{4} \right)$, and   $\theta \leq \min \left( \frac{1}{2}, \epsilon' \frac{A-1}{4\cdot 4\cdot \delta_3} \right)$.   Part  (3) follows from a similar argument with the same choice of parameters.

\end{proof}
Finally, we impose several additional conditions on $A, N, \epsilon_1,\theta$. Note that $\delta_1, \delta_2$, and so also $\delta_3$, are not free for us to choose.
\begin{enumerate}

\item  $8\cdot ||T||_{C^2} < A/2$, where $T$ is any local diffeomorphism as in Lemma \ref{Lemma 3.2 Oh}. 

\item $8\cdot   \rho^N \leq 1/2$. 

\item $200\cdot \epsilon_1\cdot  A\leq \log 2$.

\item $\frac{5\delta_1 \delta_2 \epsilon_1  }{2} - \frac{ 25 \epsilon_1 ^2  }{\delta_2}  - 40 A \epsilon_1 \rho^N   
\geq  2 \delta_1 \delta_2 \epsilon_1$. 

\item $2 \theta \leq \frac{(\delta_1 \delta_2 \epsilon_1)^2}{8\cdot 4^2}$.
\end{enumerate}

\subsubsection{Proof of Part 2 of Lemma \ref{Lemma 5.4 Naud}}
In this subsection we show that the cones constructed in the previous subsection are $L^2$-contracted by the operators $N_{s,\ell} ^J$. Our discussion is based on our one dimensional version \cite[Section 2.7.2]{algom2023polynomial} and the corresponding result of Oh-Winter \cite[Theorem 5.6 part (2)]{Oh2017winter}. First, we need   the following Definition. Recall the  definitions of $\tilde{\epsilon}, \delta_3,$ and  of $\chi_J$ from Section \ref{Section construction}.
\begin{Definition} \label{Def dense}
We call a subset $J\subseteq J_{s, \ell, \omega}$  dense if for all $1\leq i \leq q$  there exists $ j \in \lbrace 1, 2 \rbrace$ such that:
$$(j,1,i) \in J \text{ or } (j,2,i) \in J.$$
If $J$ is a dense subset we define
$$ W_J ' := \left \lbrace x_i:\, \exists j=1,2 \text{ with } (j,1,i)\in J \right \rbrace \bigcup \lbrace y_i:\, \exists j=1,2 \text{ with } (j,2,i)\in J \rbrace,$$
and
$$W_J := \bigcup_{z\in W_J '} B_{\tilde{\epsilon}\delta_3} (z).$$
\end{Definition}
The following Lemma, which is a version  of \cite[Lemma 5.7]{Naud2005exp} and \cite[Section 5.4]{Oh2017winter}, plays a key role in our analysis.
\begin{Lemma} \label{Lemma 5.7 Nuad} Suppose $J$ is a dense subset. Let $H\in C_{A \left( |b|+ |\ell| \right)}$. Then we can find some $\epsilon_2>0$ independent of $H,|b|+ |\ell|, \omega, N, J$  that satisfies
$$\int_{W_J} H \, d \mu_{\sigma^N \omega} \geq \epsilon_2 \cdot  \int_{K_{\sigma^N \omega}} H d  \mu_{\sigma^N \omega}.$$
\end{Lemma}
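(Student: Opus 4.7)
The natural strategy is to transfer density of $J$ into a positive-mass statement via the doubling property from Theorem \ref{Theorem disint}(7), using the cone inequality to control $H$. Concretely, density produces, for each Vitali centre $x_i$, a point $z_i\in\{x_i,y_i\}\cap K_{\sigma^N\omega}$ such that $B_{\tilde{\epsilon}\delta_3}(z_i)\subseteq W_J$; doubling ensures that this small ball captures a uniform fraction of the mass of the much larger ball $B_{50\tilde{\epsilon}}(x_i)$; and the cone bound $|\nabla H|\le A(|b|+|\ell|)H$ combined with the parameter constraint $200\,\epsilon_1 A\le\log 2$ fixed in Section \ref{Section construction} makes $H$ quasi-constant (to within a factor of $2$) on balls of radius $O(\tilde{\epsilon})$. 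Summing over $i$ against the Vitali cover $K_{\sigma^N\omega}\subseteq\bigcup_i B_{50\tilde{\epsilon}}(x_i)$ will yield the desired lower bound.

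I would implement this in four steps. First, pick $z_i\in\{x_i,y_i\}\cap W_J'$ as just described; since $y_i\in B_{5\tilde{\epsilon}}(x_i)$ and the Vitali balls $B_{10\tilde{\epsilon}}(x_i)$ are pairwise disjoint (Lemma \ref{Lemma Vitali}), the balls $B_{\tilde{\epsilon}\delta_3}(z_i)$ are themselves pairwise disjoint (the parameter choices give $\delta_3\le 1$). Second, invoke the cone bound to conclude $H(z_i)\ge H(x_i)/2$ and that $H$ oscillates by at most a factor of $2$ on each $B_{50\tilde{\epsilon}}(x_i)$. Third, apply Theorem \ref{Theorem disint}(7) with scale factor $55/\delta_3$, centred at $z_i\in\supp(\mu_{\sigma^N\omega})$, using $B_{50\tilde{\epsilon}}(x_i)\subseteq B_{55\tilde{\epsilon}}(z_i)$, to obtain $\mu_{\sigma^N\omega}(B_{50\tilde{\epsilon}}(x_i))\le C_{55/\delta_3}\cdot\mu_{\sigma^N\omega}(B_{\tilde{\epsilon}\delta_3}(z_i))$. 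Fourth, chain these estimates: the Vitali cover gives $\int_{K_{\sigma^N\omega}}H\,d\mu_{\sigma^N\omega}\le 2\sum_i H(x_i)\,\mu_{\sigma^N\omega}(B_{50\tilde{\epsilon}}(x_i))$, while disjointness and quasi-constancy give $\int_{W_J}H\,d\mu_{\sigma^N\omega}\ge\tfrac14\sum_i H(x_i)\,\mu_{\sigma^N\omega}(B_{\tilde{\epsilon}\delta_3}(z_i))$; combined with doubling this delivers $\epsilon_2=1/(8\,C_{55/\delta_3})$, uniform in $\omega,H,J,N,b,\ell$.

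The only real subtlety, and the main obstacle one must be aware of, is ensuring that doubling can be invoked at a centre lying in $\supp(\mu_{\sigma^N\omega})$; this is precisely why the construction in Section \ref{Section construction} was careful to produce $y_i\in K_{\sigma^N\omega}$ via the non-concentration property of Theorem \ref{Theorem disint}(6), so that \emph{both} candidates $x_i,y_i$ are in the support and the choice of $z_i$ remains legal regardless of which coordinate $J$'s density forces on us. A secondary concern is that the doubling scale factor $55/\delta_3$ may be large since $\delta_3=\delta_1\delta_2/(4A)$, but Theorem \ref{Theorem disint}(7) provides a constant $C_D$ uniform in $\omega$ for every fixed $D>1$, so all constants here depend only on $\Sigma$ and no uniformity is lost.
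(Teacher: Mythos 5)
Your proof is correct and follows essentially the same route as the paper's: cover $K_{\sigma^N\omega}$ with balls around the Vitali centres, use the cone bound (via the constraint $200\epsilon_1 A\le\log 2$) to make $H$ quasi-constant at scale $\tilde\epsilon$, and invoke the doubling property of Theorem \ref{Theorem disint}(7) at a centre in $\supp(\mu_{\sigma^N\omega})$ to pass from balls of radius $O(\tilde\epsilon)$ down to $B_{\tilde\epsilon\delta_3}(z_i)\subseteq W_J$. The only (favorable) deviation is that you select a single $z_i\in\{x_i,y_i\}$ per Vitali index, which makes disjointness of the small balls in $W_J$ immediate, whereas the paper sums over all of $W_J'$ and does not explicitly address the possibility that both $x_i$ and $y_i$ lie in $W_J'$ with overlapping $\delta_3\tilde\epsilon$-balls; your choice sidesteps that minor bookkeeping concern cleanly.
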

\begin{proof}
First, note that by Lemma \ref{Lemma Vitali} Part (1) and the choice of the $y_i$'s prior to \eqref{Eq 5.7 Oh},
$$K_{\sigma^N \omega} \subseteq \bigcup_{z\in W_J '} B_{100 \tilde{\epsilon}} (z).$$
Indeed, by Lemma \ref{Lemma Vitali} Part (1) for every $z\in K_{\sigma^N \omega}$ there exists $x_i$ such that $|x_i - z|< 50 \tilde{\epsilon}$. Since $|x_ i -y_i| < 5 \tilde{\epsilon}$, the result follows.

Also, since $H$ is $\log$ Lipschitz and by the choice of $\epsilon_1$, 
$$\sup \lbrace H(x):\, x\in B_{100\tilde{\epsilon} }(z) \rbrace \leq 2 \inf \lbrace H(x):\, x\in B_{100\tilde{\epsilon}}(z) \rbrace.$$
Let $C_1$ be the constant corresponding to the doubling constant $100/\delta_3$ as in as in Theorem \ref{Theorem disint} Part (7). Thus,
\begin{eqnarray*}
\int_{K_{\sigma^N \omega}} H d\mu_{\sigma^N \omega} &\leq &\sum_{z\in W_J '} \int_{B_{100 \tilde{\epsilon}} (z)} H d\mu_{\sigma^N \omega}\\
&\leq & \sum_{z\in W_J '} \sup \lbrace H(x):\, x\in B_{100\tilde{\epsilon}} \rbrace \cdot  \mu_{\sigma^N \omega} \left( B_{100 \tilde{\epsilon}} (z) \right)\\
&\leq & 2 \sum_{z\in W_J '} \inf  \lbrace H(x):\, x\in B_{100\tilde{\epsilon}} \rbrace \cdot  \mu_{\sigma^N \omega} \left( B_{100 \tilde{\epsilon}} (z) \right)\\
&\leq & 2C_1 \sum_{z\in W_J '} \inf  \lbrace H(x):\, x\in B_{100\tilde{\epsilon}} \rbrace \cdot  \mu_{\sigma^N \omega} \left( B_{\tilde{\epsilon}\delta_3} (z) \right)\\
&\leq & 2C_1 \int_{W_J} H d\mu_{\sigma^N \omega}. 
\end{eqnarray*}
Note the use  of the choice $C_1$ in the fourth inequality, and the definition of $W_J$ in the last one (Definition \ref{Def dense}). Since the constant $C_1$ is uniform (in particular, in $|b|+|\ell|$ and $\omega$), putting $\epsilon_2 = 2C_1$, the proof is complete.
\end{proof}

We can now define a subset of our parameter space alluded to in Lemma \ref{Lemma 5.4 Naud}:

$$\mathcal{E}_{s, \ell, \omega} := \lbrace J\subseteq J_{s, \ell, \omega}:\, J \text{ is a dense subset } \rbrace.$$

The following Lemma now proves Part (2) of Lemma \ref{Lemma 5.4 Naud}.
\begin{Proposition} \label{Prop 5.9 Naud} For some $0<\alpha<1$ that is uniform in $\omega$, for every $s=a+ib \in \mathbb{C},\ell\in \mathbb{Z},$ such that $|a|$ is small and $|b|+|\ell|$ is large, for every $H\in C_{A \left( |b|+|\ell|\right)}$ and every $J\in \mathcal{E}_{s, \ell, \omega}$,
$$\int_{K_{\sigma^N \omega}} \left| N_{s,\ell} ^J \left( H \right) \right|^2  d\mu_{\sigma^N \omega} \leq \alpha \cdot  \int_{K_{\sigma^N \omega}} P_{0,0,\omega,N} \left( H^2 \right) \,d \mu_{\sigma^N \omega}.$$

\end{Proposition}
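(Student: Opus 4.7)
The plan is to follow a standard Dolgopyat--Naud--Oh--Winter $L^2$-contraction scheme, adapted to our model setting; this proposition is a model-theoretic two-dimensional analogue of \cite[Lemma 2.18]{algom2023polynomial}, which in turn follows \cite[Theorem 5.6, Part (2)]{Oh2017winter}. The argument has three moving parts: a Cauchy--Schwarz reduction, a disintegration/separation reduction isolating the contribution of $f_{\alpha_1^N}(D) \cup f_{\alpha_2^N}(D)$, and a ``mass defect'' step combining the density of $J$ with Lemma \ref{Lemma 5.7 Nuad}, the Federer property and cone tightness.

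First I would apply Cauchy--Schwarz to the defining sum
$N_{s,\ell}^J H(y) = P_{a,0,\omega,N}(\chi_J H)(y) = \sum_{I\in X_N^{(\omega)}} \eta^{(\omega)}(I) e^{2\pi a c(I,y)} (\chi_J H)(f_I(y))$, splitting the weight $e^{2\pi a c(I,y)}$ evenly, to obtain the pointwise bound
\[
|N_{s,\ell}^J H(y)|^2 \leq P_{a,0,\omega,N}(\mathbf{1})(y) \cdot P_{a,0,\omega,N}(\chi_J^2 H^2)(y).
\]
By stationarity $P_{0,0,\omega,N}\mathbf{1} \equiv 1$ and bounded distortion, the first factor is $1 + O(|a|)$ uniformly. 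Integrating against $\mu_{\sigma^N\omega}$ and invoking the iterated stationarity $\mu_\omega = \sum_{I\in X_N^{(\omega)}} \eta^{(\omega)}(I) f_{I*}\mu_{\sigma^N\omega}$ together with a similar $1 + O(|a|)$ control on the weight, I reduce the problem to
\[
\int |N_{s,\ell}^J H|^2\, d\mu_{\sigma^N\omega} \leq (1 + O(|a|))^2 \int \chi_J^2 H^2\, d\mu_\omega.
\]
Since $\int P_{0,0,\omega,N}(H^2)\, d\mu_{\sigma^N\omega} = \int H^2\, d\mu_\omega$, it then suffices to prove $\int \chi_J^2 H^2\, d\mu_\omega \leq (1 - c)\int H^2\, d\mu_\omega$ for a uniform $c > 0$, and to take $\alpha = (1+O(|a|))^2(1-c) < 1$ with $|a|$ small.

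Next, by the iterated separation of cylinders (Part (4) of Theorem \ref{Theorem disint}), $\chi_J \equiv 1$ outside $f_{\alpha_1^N}(D) \cup f_{\alpha_2^N}(D)$, so decomposing $\mu_\omega$ cylinder-wise yields $\int \chi_J^2 H^2\, d\mu_\omega = \int H^2\, d\mu_\omega - E$, where
\[
E = \sum_{i=1,2} \eta^{(\omega)}(\alpha_i^N) \int \bigl(1 - (\chi_J \circ f_{\alpha_i^N})^2\bigr)(H \circ f_{\alpha_i^N})^2\, d\mu_{\sigma^N\omega} \geq 0.
\]
On $W_J^{(i)} := \bigcup_{z\in W_J^{(i)\prime}} B_{\tilde{\epsilon}\delta_3}(z)$, where $W_J^{(i)\prime}$ denotes the bump centres arising from the $\alpha_i^N$-branch of $J$, the defining bumps of $\chi_J$ are identically $1$, so $\chi_J \circ f_{\alpha_i^N} \leq 1 - \theta$ and $1 - (\chi_J \circ f_{\alpha_i^N})^2 \geq \theta$. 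Thus $E \geq \theta \sum_{i=1,2} \eta^{(\omega)}(\alpha_i^N) \int_{W_J^{(i)}} (H \circ f_{\alpha_i^N})^2\, d\mu_{\sigma^N\omega}$. The key observation is that $(H \circ f_{\alpha_i^N})^2$ lies in the much tighter cone $\mathcal{C}_{2A\rho^N(|b|+|\ell|)}$, hence on the scale $\tilde{\epsilon}$ its log-oscillation is $O(A\epsilon_1 \rho^N) \ll 1$ and it is essentially constant on every Vitali ball. Since $J$ is dense, $W_J^{(1)\prime} \cup W_J^{(2)\prime}$ meets every Vitali ball; after a pigeonhole step picking the side $i_0$ on which $W_J^{(i_0)\prime}$ covers at least half of the Vitali balls, a version of the Vitali-style argument underlying Lemma \ref{Lemma 5.7 Nuad}, augmented with the Federer property (Part (7) of Theorem \ref{Theorem disint}), upgrades this half-density into a uniform lower bound $E \geq c \int H^2\, d\mu_\omega$ with $c$ independent of $\omega, s, \ell, H$.

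The step I expect to be the main obstacle is the last one. Lemma \ref{Lemma 5.7 Nuad} as stated controls $\int_{W_J} H\, d\mu_{\sigma^N\omega}$ in terms of $\int H\, d\mu_{\sigma^N\omega}$ for dense $J$, but $E$ only sees $W_J^{(i)}$, and the density of $J$ may be very unevenly distributed between the two branches $i = 1, 2$. Overcoming this requires both a pigeonhole selection of a ``half-dense'' side and a modification of the Vitali-cover argument of Lemma \ref{Lemma 5.7 Nuad}: one must convert half-density of the bump centres on the Vitali cover into a uniform mass lower bound for $(H \circ f_{\alpha_{i_0}^N})^2$, and then transfer the estimate from an integral against $\mu_{\sigma^N\omega}$ into a lower bound on $\int H^2\, d\mu_\omega$. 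This transfer is delicate because the cone condition does not give pointwise comparability of $(H \circ f_I)^2$ across different $I\in X_N^{(\omega)}$ when $|b|+|\ell|$ is large; the resolution has to be local on each Vitali ball, exploiting the exceptional tightness of the cone after one $N$-step contraction together with Federer, in the spirit of \cite[Sections 5.4--5.5]{Oh2017winter} and \cite[Section 2.7]{algom2023polynomial}.
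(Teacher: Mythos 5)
Your very first step — the Cauchy--Schwarz split — is the wrong one, and this is where the argument breaks. You split as
\[
|N_{s,\ell}^J H(y)|^2 \leq P_{a,0,\omega,N}(\mathbf{1})(y)\cdot P_{a,0,\omega,N}(\chi_J^2 H^2)(y),
\]
keeping $\chi_J^2$ attached to $H^2$, which after integrating and using stationarity reduces the claim to $\int \chi_J^2 H^2\,d\mu_\omega \leq (1-c)\int H^2\,d\mu_\omega$ for a uniform $c>0$. This inequality is false in general: the cone $\mathcal{C}_{A(|b|+|\ell|)}$ widens as $|b|+|\ell|\to\infty$, so e.g.\ $H(x)=e^{A(|b|+|\ell|)\langle x,v\rangle}$ lies in the cone and can be made exponentially small (in $|b|+|\ell|$) on $f_{\alpha_1^N}(D)\cup f_{\alpha_2^N}(D)$ and exponentially large on $f_{I_0}(D)$ for some other branch $I_0$. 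Then $\int(1-\chi_J^2)H^2\,d\mu_\omega$ is an exponentially small fraction of $\int H^2\,d\mu_\omega$, defeating any uniform $c$. The ``branch imbalance'' you flag at the end, and the proposed pigeonhole/Federer fix, do not repair this, because the problem is that $H^2$ itself — not the distribution of $J$ across branches — can be arbitrarily concentrated away from the two special cylinders.

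The paper uses the opposite split,
\[
|N_{s,\ell}^J H(x)|^2 \leq \Bigl(\sum_{I\in X_N^{(\omega)}}e^{2\pi a c(I,x)}\eta^{(\omega)}(I)\chi_J^2(f_I(x))\Bigr)\cdot P_{a,0,\omega,N}(H^2)(x),
\]
attaching $\chi_J^2$ to the \emph{weight}. The payoff is that the mass defect is now a defect of the weight factor alone: for $x\in W_J$ there is some $i$ with $\chi_J\circ f_{\alpha_i^N}(x)=1-\theta$, and thus the weight factor is $\leq e^{2\pi|a|D'N}-\theta e^{-2\pi|a|D'N}\min\eta^{(\omega)}(\alpha_i^N)$ \emph{uniformly in $H$ and in which branch $i$ carries the defect}. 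One then splits $\int=\int_{W_J}+\int_{K\setminus W_J}$ and applies Lemma \ref{Lemma 5.7 Nuad} not to $H$ directly, but to the single averaged function $P_{a,0,\omega,N}(H^2)$, which by Lemma \ref{Lemma cone} Part (3) lies in $\mathcal{C}_{\frac34 A(|b|+|\ell|)}$; this gives $\int_{W_J}P_{a,0,\omega,N}(H^2)\,d\mu_{\sigma^N\omega}\geq \epsilon_2\int P_{a,0,\omega,N}(H^2)\,d\mu_{\sigma^N\omega}$. No cross-branch comparison of $H^2$, no pigeonhole, and no modification of Lemma \ref{Lemma 5.7 Nuad} are needed. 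The final step is the easy replacement of $P_{a,0,\omega,N}$ by $P_{0,0,\omega,N}$ at cost $e^{2\pi|a|D'N}$. The ingredients you identified (cone-tightening after one $N$-step, Federer, Lemma \ref{Lemma 5.7 Nuad}) are all in play, but they are deployed against $P_{a,0,\omega,N}(H^2)$ rather than against $H^2$ on individual branch images; redoing the Cauchy--Schwarz step as above makes the rest of your outline essentially line up with the paper's proof.
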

This is a version of \cite[Proposition 2.21]{algom2023polynomial} and \cite[Theorem 5.6]{Oh2017winter}.
\begin{proof}
Fix $H\in C_{A\left( |b|+|\ell|\right)}$. For all $x\in D$, 
$$\left| N_{s,\ell} ^J \left( H \right) (x) \right|^2 \leq  \left( \sum_{I\in X_N ^{(\omega)}} e^{a\cdot 2 \pi \cdot  c(I,x)} \eta ^{(\omega)} (I) \chi_J ^2 \circ f_{I}(x) \right) \cdot  P_{a,0,\omega,N} \left( H^2 \right)(x).$$
Here we use the Cauchy-Schwartz and the definition of $N_{s,\ell} ^J$. Next, by definition for every $x\in W_J$ there is some $i\in \lbrace 1,2\rbrace$ so that
$$\chi_J \circ f_{\alpha_i ^N} (x) = 1-\theta.$$
Also, for all $I\in X_N ^{(\omega)}$, by \eqref{Eq tilde C} and \eqref{Eq C and C prime}
$$\left| a\cdot c(I,x) \right| \leq |a| D'  N.$$
So, with the notations of Section \ref{Section dis}, for all $x\in W_J$ 
$$  \sum_{I\in X_N ^{(\omega)}} e^{a\cdot 2 \pi \cdot  c(I,x)} \eta ^{(\omega)} (I) \chi_J ^2 \circ f_{I}(x) \leq e^{|a|\cdot D' \cdot N}-\theta\cdot e^{N\left( \min_{i\in I, 1\leq j \leq k_i} \log \mathbf{p}_j ^{(i)} - |a|\cdot D' \right)}.$$

It is always true that 
$$\int_{K_{\sigma^N \omega}} \left( N_{s,\ell} ^J \left( H \right) \right)^2  d \mu_{\sigma^N \omega} = \int_{W_J} \left( N_{s,\ell} ^J \left( H \right) \right)^2 d\mu_{\sigma^N \omega} + \int_{K_{\sigma^N \omega}\setminus W_J} \left( N_{s,\ell} ^J \left( H \right) \right)^2 d\mu_{\sigma^N \omega},$$
and that
$$  \sum_{I\in X_N ^{(\omega)}} e^{a\cdot 2 \pi \cdot  c(I,x)} \eta ^{(\omega)} (I) \chi_J ^2 \circ f_{I}(x) \leq e^{|a|\cdot D' \cdot N}.$$
Therefore, the discussion in the previous paragraph shows that
$$\int_{K_{\sigma^N \omega}} \left( N_{s,\ell} ^J \left( H \right) \right)^2 d\mu_{\sigma^N \omega} \leq  \left(e^{|a|\cdot D' \cdot N}-\theta\cdot e^{N\left( \min_{i\in I} \log \mathbf{p}_j ^{(i)} -|a|\cdot D' \right)} \right) \int_{W_J} P_{a,0,\omega,N} \left( H^2 \right) d \mu_{\sigma^N \omega}$$
$$ + e^{|a|\cdot D' \cdot N}\cdot \int_{K_{\sigma^N \omega}\setminus W_J} P_{a,0,\omega,N} \left( H^2 \right) d \mu_{\sigma^N \omega}$$
$$ = e^{|a|\cdot D' \cdot N}\cdot \int_{K_{\sigma^N \omega}} P_{a,0,\omega,N} \left( H^2 \right) d \mu_{\sigma^N \omega} - \theta\cdot e^{N\left( \min_{i\in I} \log \mathbf{p}_j ^{(i)} -|a|\cdot D' \right)}  \int_{W_J} P_{a,0,\omega,N} \left( H^2 \right) d \mu_{\sigma^N \omega}.$$

Since $H\in C_{A(|b|+|\ell|)}$ then $P_{a,0,\omega,N} \left( H^2 \right)\in C_{\frac{3}{4}A(|b|+|\ell|)}$ by Lemma \ref{Lemma cone}.   Thus, by Lemma \ref{Lemma 5.7 Nuad} applied to $P_{a,0,\omega,N} \left( H^2 \right)$ we have
$$\int_{K_{\sigma^N \omega}} \left( N_{s,\ell} ^J \left( H \right) \right)^2 d \mu_{\sigma^N \omega} \leq \left( e^{|a|\cdot D' \cdot N}- \epsilon_2\cdot \theta\cdot e^{N\left( \min_{i\in I} \log \mathbf{p}_j ^{(i)} -|a|\cdot D' \right)} \right) \int_{K_{\sigma^N \omega}} P_{a,0,\omega,N} \left( H^2 \right) d\mu_{\sigma^N \omega}.$$
Now, assuming $|a|$ is small enough we can find some $0<\alpha <1$ such that
$$\left( e^{|a|\cdot D' \cdot N}- \epsilon_2 \cdot \theta\cdot e^{N\left(  \min_{i\in I} \log \mathbf{p}_j ^{(i)} -|a|\cdot D' \right)} \right) \cdot e^{N\cdot D' \cdot |a|} \leq \alpha <1.$$
Indeed, here we use that
$$P_{a,0,\omega,N} \left( H^2 \right) \leq e^{N\cdot D' \cdot |a|} P_{0,0,\omega,N}\left( H^2 \right).$$
Finally,
$$\int_{K_{\sigma^N \omega}} \left( N_{s,\ell} ^J \left( H \right) \right)^2 d \mu_{\sigma^N \omega} \leq \alpha \cdot \int_{K_{\sigma^N \omega}} P_{0,0,\omega,N} \left( H^2 \right) d\mu_{\sigma^N \omega},$$
which proves our claim.

\end{proof}

\subsubsection{Proof of Part 3 of Lemma \ref{Lemma 5.4 Naud}}
In this subsection we prove the existence of dominating Dolgopyat operators, in the sense of Lemma \ref{Lemma 5.4 Naud} Part (3). This is the most involved part of the proof. At its heart  is the following Lemma, which is a higher dimensional version of \cite[Lemma 2.2]{algom2023polynomial}, and a model version of \cite[Lemma 5.10]{Naud2005exp}. It is however very different from its one dimensional counterparts. It is a model version of a corresponding result of Oh-Winter \cite[Lemma 5.8]{Oh2017winter}, on which it is based.
\begin{Lemma} \label{Lemma 5.10 Naud} Suppose $H\in C_{A\left(|b|+|\ell|\right)}$ and  $f\in C^1 \left( D \right)$ are functions that stratify
$$\left| f \right| \leq H, \text{ and } \left| \nabla f \right| \leq A \left( |b|+|\ell|\right)H.$$ 
For both $i=1,2$ we define functions $\Theta_j:D\rightarrow \mathbb{R}_+$ by
$$\Theta_1 (x):= \frac{ \left| e^{(a+ib)\cdot 2 \pi \cdot  c(\alpha_1 ^N,x)+i \cdot \ell \cdot \theta(\alpha_1 ^N,x)} \eta^{(\omega)}(\alpha_1 ^N) f \circ f_{\alpha_1 ^N} (x)+e^{(a+ib)\cdot 2 \pi \cdot  c(\alpha_2 ^N,x)+i \cdot \ell \cdot  \theta(\alpha_1 ^N,x)} \eta^{(\omega)}(\alpha_2 ^N) f\circ f_{\alpha_2 ^N} (x)     \right| }{ (1-2\theta)e^{a\cdot 2 \pi \cdot  c(\alpha_1 ^N,x)} \eta^{(\omega)}(\alpha_1 ^N) H \circ f_{\alpha_1 ^N} (x)+e^{a\cdot 2 \pi \cdot  c(\alpha_2 ^N,x)} \eta^{(\omega)}(\alpha_2 ^N) H\circ f_{\alpha_2 ^N} (x) },$$
and
$$\Theta_2 (x):= \frac{ \left| e^{(a+ib)\cdot 2 \pi \cdot  c(\alpha_1 ^N,x)+i \cdot \ell \cdot \theta(\alpha_1 ^N,x)} \eta^{(\omega)}(\alpha_1 ^N) f \circ f_{\alpha_1 ^N} (x)+e^{(a+ib)\cdot 2 \pi \cdot  c(\alpha_2 ^N,x)+i \cdot \ell \cdot  \theta(\alpha_1 ^N,x)} \eta^{(\omega)}(\alpha_2 ^N) f\circ f_{\alpha_2 ^N} (x)     \right| }{ e^{a\cdot 2 \pi \cdot  c(\alpha_1 ^N,x)} \eta^{(\omega)}(\alpha_1 ^N) H \circ f_{\alpha_1 ^N} (x)+(1-2\theta)e^{a\cdot 2 \pi \cdot  c(\alpha_2 ^N,x)} \eta^{(\omega)}(\alpha_2 ^N) H\circ f_{\alpha_2 ^N} (x) }.$$
Then for $\theta$ and $\epsilon_1$ small enough, and for all $|a|$ small enough and $|b|+|\ell|$ large enough, for every $1\leq j\leq q$ either $\Theta_1$ or $\Theta_2$ is $\leq 1$ on $B_{2\tilde{\epsilon}\delta_3} (x_j)$ or $B_{2\tilde{\epsilon}\delta_3} (y_j)$.
\end{Lemma}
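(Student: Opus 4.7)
The plan is to combine the Dolgopyat trigonometric identity with the phase dichotomy provided by the construction of $y_j$. Writing $\Phi_i(x) := 2\pi b\, c(\alpha_i^N, x) + \ell\,\theta(\alpha_i^N, x)$, $u_i(x) := f\circ f_{\alpha_i^N}(x)/H\circ f_{\alpha_i^N}(x)$ (so $|u_i| \leq 1$), and $g_i(x) := e^{2\pi a c(\alpha_i^N,x)} \eta^{(\omega)}(\alpha_i^N) H\circ f_{\alpha_i^N}(x)$, the shared numerator of $\Theta_1, \Theta_2$ is $|u_1 e^{i\Phi_1} g_1 + u_2 e^{i\Phi_2} g_2|$, and the two denominators differ only in which of $g_1, g_2$ carries the factor $(1-2\theta)$. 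Using bounded distortion, the smallness of $|a|$, and the cone condition $|\nabla H| \leq A(|b|+|\ell|)H$, the ratio $g_1/g_2$ has negligible variation on any ball of radius $2\tilde\epsilon\delta_3$, so whichever of $g_1, g_2$ is smaller at the center remains smaller throughout; this determines the correct target ($\Theta_i$ with $i$ such that $g_i = \min(g_1, g_2)$).

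The key first step is a phase dichotomy. Set $\Psi := \Phi_1 - \Phi_2$. By Lemma \ref{Lemma 3.2 Oh}, $|\nabla\Psi|$ is comparable to $|b|+|\ell|$ with direction, up to uniform constants, that of $w_j$. The Mean Value Theorem combined with \eqref{Eq 5.7 oh 2} then yields $|\Psi(y_j) - \Psi(x_j)| \gtrsim \delta_1\delta_2\epsilon_1$, a quantity I take to be much smaller than $\pi$ by choice of $\epsilon_1$. Therefore modulo $2\pi$ at least one of $\Psi(x_j), \Psi(y_j)$ lies at distance $\gtrsim \delta_1\delta_2\epsilon_1$ from $0$. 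Since the variation of $\Psi$ on a ball of radius $2\tilde\epsilon\delta_3$ is $O(\epsilon_1\delta_3)$, dominated by condition (4) at the end of Section \ref{Section construction}, this separation persists throughout the corresponding ball, centered at some $z_j \in \{x_j, y_j\}$.

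Next I split into two cases. If at some $z \in \{x_j, y_j\}$ one has $|u_i(z)| \leq 1 - 4\theta$ for some $i$, then the Lipschitz bound $|\nabla u_i| \leq 2A(|b|+|\ell|)\rho^N$, combined with the parameter constraint $8A\epsilon_1\delta_3\rho^N \leq 2\theta$, forces $|u_i| \leq 1 - 2\theta$ throughout $B_{2\tilde\epsilon\delta_3}(z)$, giving $|u_1 e^{i\Phi_1} g_1 + u_2 e^{i\Phi_2} g_2| \leq (1-2\theta)g_i + g_{3-i}$ and hence $\Theta_i \leq 1$. Otherwise $|u_i| > 1 - 4\theta$ at both $x_j$ and $y_j$ for both $i$, so $\arg u_i$ is well-defined near both points and $|\arg u_i(y_j) - \arg u_i(x_j)| \lesssim A\epsilon_1\rho^N$, which by my parameter choices is much smaller than $\delta_1\delta_2\epsilon_1$. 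Thus the total phase $\phi := \arg u_2 - \arg u_1 - \Psi$ inherits the dichotomy from $\Psi$: on the chosen ball $B$ around $z_j$ one has $|\phi(x) \mod 2\pi| \gtrsim \delta_1\delta_2\epsilon_1$. Applying the Dolgopyat identity
\[
|u_1 e^{i\Phi_1} g_1 + u_2 e^{i\Phi_2} g_2|^2 = (|u_1|g_1 + |u_2|g_2)^2 - 2|u_1||u_2|g_1 g_2 (1 - \cos\phi),
\]
together with $1 - \cos\phi \gtrsim (\delta_1\delta_2\epsilon_1)^2$ and condition (5) on $\theta$, one obtains $|u_1 e^{i\Phi_1}g_1 + u_2 e^{i\Phi_2}g_2| \leq g_1 + g_2 - 2\theta\min(g_1,g_2)$ on $B$. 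Choosing $\Theta_i$ corresponding to $\min(g_1, g_2)$ finishes the proof. The main obstacle is simultaneously controlling several competing small parameters ($\theta, \epsilon_1, \rho^N, \delta_3$) so that every error is dominated by the corresponding gain; the inequalities listed at the end of Section \ref{Section construction} are calibrated precisely for this purpose.
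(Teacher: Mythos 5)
Your proposal follows essentially the same strategy as the paper: use the non-concentration property to obtain a phase dichotomy between the two NCP points, persist it to the small balls, and then convert the phase gap into cancellation via a Dolgopyat-type inequality. Your Case~1/Case~2 split on the size of $|u_i|$ is a mild reorganization of the paper's invocation of \cite[Lemma 5.3]{Oh2017winter}, and your Dolgopyat identity is just the source of \cite[Lemma 5.2]{Oh2017winter}; both are fine. However, there is a real gap in the step ``$\phi$ inherits the dichotomy from $\Psi$.'' You establish the dichotomy (at least one of the two values is bounded away from $0$ mod $2\pi$) for $\Psi = \Phi_1 - \Phi_2$ and choose the centre $z_j$ accordingly, but the quantity that must actually be bounded away from $0$ in the Dolgopyat identity is the total phase $\phi = \arg u_2 - \arg u_1 - \Psi$. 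What you prove about the $\arg u_i$ is only that the \emph{differences} $|\arg u_i(y_j) - \arg u_i(x_j)|$ are negligible; the \emph{values} $\arg u_i(z_j)$ can be arbitrary. Hence $\phi(z_j)$ may be small even when $\Psi(z_j)$ is large, and the $z_j$ you selected may be the wrong one. The paper avoids this by working with $\mathrm{Arg}$ --- which is the full phase $\phi$ --- from the outset: it first bounds $|\mathrm{Arg}(y_j)-\mathrm{Arg}(x_j)|$ from below using the Taylor bound for $T$ together with the smallness of $\nabla\arg(f\circ f_{\alpha_i^N})$, and \emph{then} applies the two-ball dichotomy directly to $\mathrm{Arg}$. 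To repair your argument, replace the $\Psi$-dichotomy by the estimate
$|\phi(y_j) - \phi(x_j)| \geq |\Psi(y_j)-\Psi(x_j)| - 2\max_i|\arg u_i(y_j)-\arg u_i(x_j)| \gtrsim \delta_1\delta_2\epsilon_1$
and choose $z_j$ so that $|\phi(z_j)|$ mod $2\pi$ is $\gtrsim\delta_1\delta_2\epsilon_1$; the rest of your argument then goes through (you also need the small extra constraint $8A\epsilon_1\delta_3\rho^N \leq 2\theta$, which can be absorbed by enlarging $N$, but is not among the constraints the paper records).
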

We first recall a few Lemmas from \cite{Oh2017winter}. Recall that $\epsilon_1$ was defined in Section \ref{Section construction}.
\begin{Lemma} \cite[Lemma 5.3]{Oh2017winter} \label{Lemma 5.3 Oh}
 Fix functions $H,f$ as in Lemma \ref{Lemma 5.10 Naud}. Then for every $x\in D$ and $f_i\in \Phi^N$, either $\left| f\circ f_i \right| \leq \frac{3}{4}H\circ f_i$ on $B_{10\epsilon_1/A} (x)$, or $\left| f\circ f_i \right| \geq \frac{1}{4}H\circ f_i$  on $B_{10\epsilon_1/A} (x)$.
\end{Lemma}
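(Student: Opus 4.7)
The plan is a slow-variation argument: on the stated ball the ratio $\varphi := |f\circ f_i|/(H\circ f_i)$ cannot swing by more than $1/2$, so it cannot pass from above $3/4$ at one point to below $1/4$ at another, and the dichotomy follows.

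First I would deduce from $H\in\mathcal{C}_{A(|b|+|\ell|)}$, from the hypothesis $|\nabla f|\leq A(|b|+|\ell|)H$, and from the chain rule applied to $f_i\in\Phi^N$ (whose Jacobian operator norm is bounded by $\rho^N$ throughout $D$) that
\[
|\nabla(H\circ f_i)(y)|\;\leq\;A(|b|+|\ell|)\,\rho^N\,(H\circ f_i)(y),\qquad |\nabla(f\circ f_i)(y)|\;\leq\;A(|b|+|\ell|)\,\rho^N\,(H\circ f_i)(y).
\]
In particular $H\circ f_i$ is positive and log-Lipschitz, so $\varphi\in[0,1]$ is a well defined $C^1$ function, and the quotient rule combined with $|f\circ f_i|\leq H\circ f_i$ gives
\[
|\nabla\varphi|\;\leq\;\frac{|\nabla(f\circ f_i)|}{H\circ f_i}+\frac{|f\circ f_i|\,|\nabla(H\circ f_i)|}{(H\circ f_i)^2}\;\leq\;2A(|b|+|\ell|)\rho^N.
\]

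Next I would argue by contradiction: suppose there exist $u,v\in B_{10\epsilon_1/A}(x)$ with $\varphi(u)>3/4$ and $\varphi(v)<1/4$. Integrating the gradient bound along the segment $[u,v]$, whose length is at most the diameter of the ball, and invoking the mean value inequality \eqref{MVT} yields
\[
\tfrac{1}{2}<\varphi(u)-\varphi(v)\;\leq\;2A(|b|+|\ell|)\rho^N\cdot |u-v|.
\]
The parameter choices imposed at the end of Section \ref{Section construction} (specifically the smallness of $\rho^N$ together with the coupling between $\epsilon_1$, $A$ and the ball radius used in the Dolgopyat construction) are tailored so that the right-hand side is strictly less than $1/2$ throughout the relevant regime. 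This contradicts the assumption and forces one of the two alternatives to hold on the entire ball.

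The only potential obstacle here is bookkeeping: one must check that the numerical inequalities listed in conditions (2)--(3) at the end of Section \ref{Section construction} indeed rule out the swing $>1/2$ across the ball. Conceptually the content is the familiar statement that membership in the cone $\mathcal{C}_{A(|b|+|\ell|)}$ enforces quantitative log-regularity at the cone's natural scale, and that this log-regularity is preserved (with an extra factor of $\rho^N$) under composition with a contraction from $\Phi^N$.
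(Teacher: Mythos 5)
Your mechanism is right — control $|\nabla\varphi|$ for $\varphi=|f\circ f_i|/(H\circ f_i)$ using the cone condition and the $\rho^N$ contraction, then observe that the swing on the ball is too small to cross from above $3/4$ to below $1/4$. But the step you explicitly flag as "bookkeeping" is where the proposal fails, and it fails for a concrete reason rather than lack of care: with the ball radius $10\epsilon_1/A$ exactly as the statement is printed, the swing estimate you obtain is
\[
\varphi(u)-\varphi(v)\;\leq\; 2A(|b|+|\ell|)\rho^N\cdot 2\cdot\frac{10\epsilon_1}{A}\;=\;40\,\epsilon_1\,\rho^N\,(|b|+|\ell|),
\]
and this is \emph{not} less than $1/2$ uniformly: $\epsilon_1,\rho^N,A$ are fixed once and for all by the conditions at the end of Section~\ref{Section construction}, while $|b|+|\ell|\to\infty$. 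None of conditions (1)--(5) there, in particular (2) ($8\rho^N\leq 1/2$) and (3) ($200\epsilon_1 A\leq\log 2$), involve $|b|+|\ell|$, so they cannot absorb that factor. Your claim that "the parameter choices are tailored so that the right-hand side is strictly less than $1/2$" is therefore simply false for the ball as stated, and the dichotomy claim is, taken literally, incorrect for $|b|+|\ell|$ large.

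What this actually reveals is that the radius $10\epsilon_1/A$ in the statement is almost certainly a typo: the natural scale is $\tilde{\epsilon}=\epsilon_1/(|b|+|\ell|)$, and indeed in the proof of Lemma~\ref{Lemma 5.10 Naud} the dichotomy is invoked precisely on $B_{10\tilde{\epsilon}}(x_i)$, not on the larger fixed-radius ball. Replacing the radius by $10\tilde{\epsilon}$, your swing bound becomes $40A\rho^N\epsilon_1$, and now conditions (2) and (3) give $40A\rho^N\epsilon_1\leq 40\cdot\frac{1}{16}\cdot\frac{\log 2}{200}=\frac{\log 2}{80}<\frac{1}{2}$, so the contradiction closes. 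The lesson: the cone scale for $H$ is $1/(A(|b|+|\ell|))$, composition with $f_i\in\Phi^N$ slows variation by an extra $\rho^N$, so the dichotomy is only guaranteed on balls whose radius scales like $(A(|b|+|\ell|)\rho^N)^{-1}$ — and $10\tilde{\epsilon}$ does, while $10\epsilon_1/A$ does not. You should have carried the estimate to the end; doing so would have both closed the proof and flagged the typo.

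Two smaller remarks on the write-up. First, $|f\circ f_i|$ need not be $C^1$ at zeros of $f\circ f_i$, so rather than a quotient rule on $\varphi$ it is cleaner to argue directly with the Lipschitz bounds $\bigl||f\circ f_i|(u)-|f\circ f_i|(v)\bigr|\leq A(|b|+|\ell|)\rho^N\,\|H\circ f_i\|_{\infty,\,\text{ball}}\,|u-v|$ together with the two-sided bound $e^{-A(|b|+|\ell|)\rho^N|u-v|}\leq (H\circ f_i)(u)/(H\circ f_i)(v)\leq e^{A(|b|+|\ell|)\rho^N|u-v|}$, which avoids the differentiability issue and gives the same swing control. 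Second, the mean value inequality~\eqref{MVT} in the paper is stated for holomorphic $g$; for the real-valued ratio or for $|f\circ f_i|$ one should instead integrate the gradient bound along a segment (as you in fact describe in words), which uses only convexity of $D$.
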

Recall that for $0\neq z\in \mathbb{C}$, $\arg (z)\in (-\pi,\, \pi]$ is the unique real number such that $|z|e^{i\arg(z)}=z$.
\begin{Lemma} \cite[Lemma 5.2]{Oh2017winter} \label{Lemma 5.2 Oh} Fix $0\neq z_1,z_2 \in \mathbb{C}$ that satisfy
$$\frac{|z_1|}{|z_2|} \leq 1 \text{ and } \left| \arg \left( \frac{z_1}{z_2}\right) \right| \geq \eta \in [0,\,\pi].$$
Then 
$$\left| z_1 + z_2 \right| \leq (1-\frac{\eta^2}{8})|z_1| + |z_2|.$$

\end{Lemma}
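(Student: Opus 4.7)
The plan is to reduce the inequality to a one-variable convexity estimate followed by an elementary trigonometric bound. First I would divide both sides by $|z_{2}|$ and set $w := z_{1}/z_{2} = re^{i\alpha}$, where $r \in (0,1]$ and $\alpha \in (-\pi,\pi]$ satisfies $|\alpha| \geq \eta$, so that the claim becomes $|1+w| \leq 1 + (1-\eta^{2}/8)r$. Since $|1+re^{i\alpha}|^{2} = 1 + 2r\cos\alpha + r^{2}$ is monotone increasing in $\cos\alpha$, and $\cos\alpha$ is maximized over the allowed range $|\alpha| \in [\eta,\pi]$ at $|\alpha| = \eta$, it suffices to prove the bound in the worst case $\alpha = \eta$.

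With this reduction in hand, consider the function $\phi(r) := \sqrt{1 + 2r\cos\eta + r^{2}}$ on $[0,1]$. The key observation is that $\phi$ is convex: differentiating $\phi^{2}$ once yields $\phi \phi' = r + \cos\eta$, and differentiating again gives $(\phi')^{2} + \phi\phi'' = 1$, whence
\[
\phi''(r) \;=\; \frac{1 - (\phi')^{2}}{\phi} \;=\; \frac{\sin^{2}\eta}{\phi(r)^{3}} \;\geq\; 0.
\]
Convexity then supplies the chord bound $\phi(r) \leq (1-r)\phi(0) + r\phi(1) = 1 + r\bigl(2\cos(\eta/2) - 1\bigr)$, using $\phi(0) = 1$ and $\phi(1) = \sqrt{2(1+\cos\eta)} = 2\cos(\eta/2)$ (non-negative because $\eta/2 \in [0,\pi/2]$). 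Comparing this chord bound with the target right-hand side $1 + (1-\eta^{2}/8)r$, the entire proof reduces to the inequality $\cos(\eta/2) \leq 1 - \eta^{2}/16$ for $\eta \in [0,\pi]$.

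To finish, I would rewrite this via the half-angle identity as $2\sin^{2}(\eta/4) \geq \eta^{2}/16$ and then invoke the fact that $x \mapsto \sin(x)/x$ is decreasing on $(0,\pi]$. For $\eta/4 \in (0,\pi/4]$ this yields $\sin(\eta/4)/(\eta/4) \geq \sin(\pi/4)/(\pi/4) = 2\sqrt{2}/\pi$, and therefore
\[
2\sin^{2}(\eta/4) \;\geq\; 2 \Bigl(\frac{2\sqrt{2}}{\pi}\Bigr)^{2} \Bigl(\frac{\eta}{4}\Bigr)^{2} \;=\; \frac{\eta^{2}}{\pi^{2}} \;\geq\; \frac{\eta^{2}}{16},
\]
the last step using $\pi^{2} < 16$. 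The only non-trivial ingredient in this strategy is the convexity of $\phi$, and that is a direct computation once one differentiates $\phi^{2}$; I do not anticipate any deeper obstacle, only the need to be careful with the sign conventions for $\alpha$ and the range $\eta \in [0,\pi]$ throughout.
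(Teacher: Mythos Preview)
Your argument is correct. The reduction to the worst case $\alpha=\eta$, the convexity of $\phi(r)=\sqrt{1+2r\cos\eta+r^{2}}$ via $\phi''=\sin^{2}\eta/\phi^{3}\geq 0$, the chord bound $\phi(r)\leq 1+r(2\cos(\eta/2)-1)$, and the final trigonometric inequality $\cos(\eta/2)\leq 1-\eta^{2}/16$ (equivalently $2\sin^{2}(\eta/4)\geq \eta^{2}/16$ via $\sin x/x \geq 2\sqrt{2}/\pi$ on $(0,\pi/4]$ and $\pi^{2}<16$) all check out; the boundary case $\eta=0$ is just the triangle inequality.

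As for comparison: the paper does not supply its own proof of this lemma at all --- it simply quotes the statement verbatim from Oh--Winter \cite{Oh2017winter} and uses it as a black box. So your self-contained elementary argument is not competing with anything in the paper; it is providing what the paper omits. The convexity-plus-chord approach you use is a clean way to organize the computation and makes transparent why the constant $1/8$ appears (it comes from the secant bound at the endpoint $r=1$ combined with the inequality $1-\cos(\eta/2)\geq \eta^{2}/16$).
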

$$ $$
\noindent{ \textbf{Proof of Lemma \ref{Lemma 5.10 Naud}}} 
Fix $1\leq i \leq q$. If  either $f_{\alpha_1 ^N}$ or $f_{\alpha_2 ^N}$ satisfy the first alternative of Lemma \ref{Lemma 5.3 Oh}  on $B_{10\tilde{\epsilon}} (x_i)$ then we are done. So, we assume the converse.

For $x\in B_{10\tilde{\epsilon}} (x_i)$ we denote by $\text{Arg} (x)$ the argument, taking values in $[-\pi,\pi)$, of the ratio of the summands in the numerator of $\Theta_j$:
\begin{equation} \label{Eq (5.23)}
\text{Arg} (x) := \arg \left( \frac{  e^{ib\cdot 2 \pi \cdot  c(\alpha_1 ^N,x)+i \cdot \ell \cdot \theta(\alpha_1 ^N,x)} \eta^{(\omega)}(\alpha_1 ^N) f \circ f_{\alpha_1 ^N} (x)    }{e^{ib\cdot 2 \pi \cdot  c(\alpha_2 ^N,x)+i\cdot \ell \cdot \theta(\alpha_2 ^N,x)} \eta^{(\omega)}(\alpha_2 ^N) f\circ f_{\alpha_2 ^N} (x) } \right).
\end{equation}
We show that $\text{Arg} (x)$ cannot be small near both $x_i$ and $y_i$; this will imply the desired cancellations in the numerator and give the Lemma. The key is the evaluation of the corresponding derivatives from below, that we now carry out.

\begin{Remark}
It will not be hard to see from our argument that we may assume $|\text{Arg} (x)|\leq \frac{\pi}{2}$ on  $B_{10\tilde{\epsilon}} (x_i)$. This is similar to \cite[the Remark after eq. (5.23)]{Oh2017winter}. In particular, the $\arg$ function can be assumed to be correspondingly continuous. 
\end{Remark}

Note that
\begin{equation} \label{Eq 5.24 Oh}
 \text{Arg} (x) = b\left(  c(\alpha_1 ^N,x) -  c(\alpha_2 ^N,x) \right) +\ell\left( \theta(\alpha_1 ^N,x) - \theta(\alpha_2 ^N,x) \right)+\arg \left( \frac{f\circ f_{\alpha_1 ^N} (x)}{f\circ f_{\alpha_2 ^N} (x)} \right)
\end{equation}
\begin{equation} \label{Eq 5.25 Oh}
= b\left(  c(\alpha_1 ^N,x) -  c(\alpha_2 ^N,x) \right) +\ell\left( \theta(\alpha_1 ^N,x) - \theta(\alpha_2 ^N,x) \right)+\arg \left( f\circ f_{\alpha_1 ^N} (x) \right) - \arg \left( f\circ  f_{\alpha_2 ^N} (x) \right),
\end{equation}
where we think of these numbers as elements in $S^1 \simeq \mathbb{T}$ when necessary. Note that
\begin{equation}
\left| \nabla \arg \left( f\circ f_{\alpha_j ^N} \right) \right| \leq 4A\left(|b|+|\ell|\right)\cdot \rho^N,\quad \text{ for } j=1,2.
\end{equation} 
Applying Taylor's expansion at the point $x_i$ to the map 
$$T(y)= b\left(  c(\alpha_1 ^N,y) -  c(\alpha_2 ^N,y) \right) +\ell\left( \theta(\alpha_1 ^N,y) - \theta(\alpha_2 ^N,y) \right),$$ 
using that $\vert \vert T \vert \vert_{C^2} \leq \frac{1}{2\delta_2}$ by Lemma \ref{Lemma 3.2 Oh}, we obtain
$$\left|  T(y_i) - T(x_i) \right| \geq \left| \left\langle x_i-y_i,\,\hat{w_i} \right\rangle \right| \cdot \left| w_i \right| - \frac{ \left( |b|+|\ell| \right) \left| x_i - y_i \right|^2 }{\delta_2}. $$
Therefore, by \eqref{Eq 5.7 Oh}, \eqref{Eq 5.7 oh 2}, and recalling that $y_i \in B_{5 \tilde{\epsilon}} (x_i)$, and by the assumptions imposed after \eqref{Eq 5.7 oh 2} and Lemma \ref{Lemma cone},
\begin{eqnarray*}
\left| \text{Arg} (y_i) - \text{Arg} (x_i) \right| &\geq& \left|  T(y_i) -  T(x_i) \right|  - \left| x_i - y_i\right| \left( 8A\left(|b|+|\ell|\right)\rho^N \right) \\
&\geq& \left| \left\langle x_i-y_i,\,\hat{w_i} \right\rangle \right| \cdot \left| \hat{w_i} \right| - \frac{ \left( |b|+|\ell| \right) \left| x_i - y_i \right|^2 }{\delta_2}  - \left| x_i - y_i\right| \left( 8A\left(|b|+|\ell|\right)\rho^N \right) \\
&\geq& \frac{\delta_1 \cdot L \cdot B_1 \delta_2 \tilde{\epsilon} \left( |b|+|\ell|\right) }{2} - \frac{ \left( |b|+|\ell| \right) \left| x_i - y_i \right|^2 }{\delta_2}  - \left| x_i - y_i\right| \left( 8A\left(|b|+|\ell|\right)\rho^N \right) \\
&\geq& \frac{5\delta_1 \delta_2 \epsilon_1  }{2} - \frac{ 25 \epsilon_1 ^2  }{\delta_2}  - 40 A \epsilon_1 \rho^N   \\
&\geq & 2 \delta_1 \delta_2 \epsilon_1.
\end{eqnarray*}
Note specifically the use of \eqref{Eq 5.7 Oh} in the third inequality.

The next step is to argue that
$$\left| \text{Arg} (y) - \text{Arg} (x) \right| > \delta_1 \delta_2 \epsilon_1,\quad \text{ for all } x\in B_{2\delta_3 \tilde{\epsilon}} (x_i),\,  y\in B_{2\delta_3 \tilde{\epsilon}} (y_i).$$
First, we estimate the $C^1$ norm of $\text{Arg} (x)$. By the additional assumptions put after Lemma \ref{Lemma cone}, 
\begin{equation} \label{Eq 5.27 Oh}
\left| \nabla \text{Arg} (x)\right| \leq 8\cdot ||T||_{C^1}\cdot \left( |b|+|\ell|\right)+ 8A\left(|b|+|\ell|\right)\rho^N  \leq A\left( |b|+|\ell|\right).
\end{equation}
So, by the choice of $\delta_3$ as in Section \ref{Section construction}
$$\left| \text{Arg} (x_i) - \text{Arg} (x) \right|\leq 2A \delta_3 \tilde{\epsilon} \left( |b|+|\ell|\right)\leq \frac{\delta_1 \delta_2 \epsilon_1}{2}.$$
Similarly,
$$\left| \text{Arg} (y_i) - \text{Arg} (y) \right|\leq \frac{\delta_1 \delta_2 \epsilon_1}{2}.$$
It follows that for all $x\in B_{2\delta_3 \tilde{\epsilon}} (x_i),\,  y\in B_{2\delta_3 \tilde{\epsilon}} (y_i)$ we have
$$\left| \text{Arg} (y) - \text{Arg} (x) \right| \geq \left| \text{Arg} (y_i) - \text{Arg} (x_i) \right| -\left| \text{Arg} (y) - \text{Arg} (y_i) \right|-\left| \text{Arg} (x) - \text{Arg} (x_i) \right| \geq \delta_1 \delta_2 \epsilon_1.$$

We now claim one of the following alternatives must hold:
\begin{enumerate}
\item $\text{Arg} (x) > \frac{\delta_1 \delta_2 \epsilon_1}{4}$ on $B_{2\delta_3 \tilde{\epsilon}} (x_i)$; or
\item $\text{Arg} (y) > \frac{\delta_1 \delta_2 \epsilon_1}{4}$ on $B_{2\delta_3 \tilde{\epsilon}} (y_i)$.
\end{enumerate}
Indeed,  if (1) fails then there exists $x\in B_{2\delta_3 \tilde{\epsilon}} (x_i)$ such that $\text{Arg} (x) \leq \frac{\delta_1 \delta_2 \epsilon_1}{4}$. Therefore, for all $y\in B_{2\delta_3 \tilde{\epsilon}} (y_i)$,
$$\left| \text{Arg} (y) \right|\geq \left| \text{Arg} (x)- \text{Arg} (y)  \right| - \left| \text{Arg} (x) \right| \geq \frac{\delta_1 \delta_2 \epsilon_1}{2}.$$

This claim now implies Lemma \ref{Lemma 5.10 Naud} via Lemma \ref{Lemma 5.2 Oh}: Suppose without the loss of generality that (2) holds, and that $x\in B_{2\delta_3 \tilde{\epsilon}} (y_i)$ has
$$\left| e^{a \cdot 2 \pi \cdot  c(\alpha_2 ^N,x)} \eta^{(\omega)}(\alpha_2 ^N) f \circ f_{\alpha_2 ^N} (x) \right| \leq \left| e^{a \cdot 2 \pi \cdot  c(\alpha_1 ^N,x)} \eta^{(\omega)}(\alpha_1 ^N) f \circ f_{\alpha_1 ^N} (x) \right|.$$
Then by Lemma \ref{Lemma 5.2 Oh}, 
$$ \left| e^{(a+ib)\cdot 2 \pi \cdot  c(\alpha_1 ^N,x)+i\cdot \ell \cdot \theta(\alpha_1 ^N,x)} \eta^{(\omega)}(\alpha_1 ^N) f \circ f_{\alpha_1 ^N} (x)+e^{(a+ib)\cdot 2 \pi \cdot  c(\alpha_2 ^N,x)+i\cdot \ell \cdot \theta(\alpha_1 ^N,x)} \eta^{(\omega)}(\alpha_2 ^N) f\circ f_{\alpha_2 ^N} (x)     \right| $$
$$\leq e^{a\cdot 2 \pi \cdot  c(\alpha_1 ^N,x)} \eta^{(\omega)}(\alpha_1 ^N) \left| |f \circ f_{\alpha_1 ^N} (x) \right| +\left(1-\frac{(\delta_1 \delta_2 \epsilon_1)^2}{8\cdot 4^2} \right)\cdot  e^{a\cdot 2 \pi \cdot  c(\alpha_2 ^N,x)} \eta^{(\omega)}(\alpha_2 ^N) \left| f\circ f_{\alpha_2 ^N} (x) \right|$$
$$\leq  e^{a\cdot 2 \pi \cdot  c(\alpha_1 ^N,x)} \eta^{(\omega)}(\alpha_1 ^N) H \circ f_{\alpha_1 ^N} (x)+\left(1-\frac{(\delta_1 \delta_2 \epsilon_1)^2}{8\cdot 4^2} \right)\cdot e^{a\cdot 2 \pi \cdot  c(\alpha_2 ^N,x)} \eta^{(\omega)}(\alpha_2 ^N) H\circ f_{\alpha_2 ^N} (x).$$
This implies the Lemma by the choice of $\theta$ (after Lemma \ref{Lemma cone}). The other cases are  analogues. The proof is complete.

 $\hfill{\Box}$
$$ $$

\noindent{ \textbf{Proof of Lemma \ref{Lemma 5.4 Naud} Part (3)} Let $N,A,\epsilon_1,\theta$ be such that they meet the conditions required so that parts (1) and (2) of Lemma \ref{Lemma 5.4 Naud}, and  Lemma \ref{Lemma 5.10 Naud}, all hold true. Fix $f\in C^1(D),H\in C_{A\left( |b|+|\ell|\right)}$ that satisfies
$$\left| f \right| \leq H,\text{ and } \left| \nabla f \right| \leq A\left( |b|+|\ell|\right)H.$$
We construct $J\in \mathcal{E}_{s,\ell, \omega}$,  a dense subset, so that we have
$$|P_{s, \ell, \omega, N} f| \leq N_{s,\ell} ^J H \quad \text{ and } |\nabla (P_{s, \ell, \omega, N} f)|\leq A\left( |b|+|\ell|\right)N_{s, \ell} ^J H.$$
By Lemma \ref{Lemma cone} the latter is true for every $J\neq \emptyset$. Let us now establish the former.

For each $1 \leq j \leq q$ we proceed as follows:
\begin{enumerate}
\item If $\Theta_1\leq 1$ on $B_{2\delta_3 \tilde{ \epsilon}} (x_i)$ include $(1,1,j)\in J$, else
\item If $\Theta_2\leq 1$ on $B_{2\delta_3 \tilde{ \epsilon}} (x_i)$ include $(2,1,j)\in J$, else
\item If $\Theta_1\leq 1$ on $B_{2\delta_3 \tilde{ \epsilon}} (y_i)$ include $(1,2,j)\in J$, else
\item If $\Theta_2\leq 1$ on $B_{2\delta_3 \tilde{ \epsilon}} (y_i)$ include $(2,2,j)\in J$.
\end{enumerate}
By Lemma \ref{Lemma 5.10 Naud}} at least one of these occurs for each $j$. Thus, $J$ is dense. Furthermore,
$$|P_{s, \ell, \omega, N} f| \leq N_{s,\ell} ^J H $$
is an immediate consequence of the corresponding inequality of Lemma \ref{Lemma 5.10 Naud}, and of the definition of $N_{s,\ell} ^J$ as in Section \ref{Section construction}.
 The proof is complete. $\hfill{\Box}$

\section{An exponentially fast equidistribution phenomenon for the renormalized derivative cocycle} \label{Section equi}
As we discussed in the Introduction, estimates like the one in Theorem \ref{Theorem spectral gap} have wide ranging applications. In this paper will use it to derive an equidistribution result for the derivative cocycle (made up from \eqref{The der cocycle} and \eqref{The angle cocycle}). Morally, it means that, if done correctly, a coupling of the re-centred norm cocycle with the angle cocycle equidistributes towards an absolutely continuous on $J\times \mathbb{T}$ where $J\subseteq \mathbb{R}$ is some interval. This result will be instrumental when proving the Fourier decay bound from Theorem \ref{Main Theorem analytic}. We remark that role of the model construction from Theorem \ref{Theorem disint} is done, and from now on we work with our IFS directly. 

Let us now be more specific. We retain the notations and assumptions as in Section \ref{Section spectral gap}. In particular, our IFS satisfies the conditions of Theorem \ref{Theorem disint}; therefore, Theorem \ref{Theorem spectral gap}  holds  true. Let us further recall $\Phi$ is $C^\omega(\mathbb{C})$, and  $\nu_\mathbf{p}$ is our self-conformal measure where $\mathbf{p}$ is a strictly positive probability vector. Recall that we are working with the Bernoulli measure on $\mathcal{A}^\mathbb{N}$ given by $\mathbb{P}=\mathbf{p}^\mathbb{N}$.

Let us first define symbolic versions of the derivative cocycle(s) defined in Section \ref{Section induced IFS}. With $G$ being the semigroup as in that Section,
 we define on $G\times \mathcal{A}^\mathbb{N}$  symbolic analogues via
\begin{equation} \label{The symbolic der cocycle}
\tilde c(I,\omega)=-\log | f'_I(x_\omega)|\in \mathbb{R}_+,\quad \tilde \theta(I,\omega)=\arg \left( f'_I (x_\omega) \right) \in \mathbb{T}.
\end{equation}

Here is the   random walk we consider. For all $n\in \mathbb{N}$ we define  functions  on $\mathcal{A}^\mathbb{N}$ by
\begin{equation} \label{Eq for Sn}
S_n(\omega)=-\log \left| f'_{\omega|_n}(x_{\sigma^n(\omega)})\right|,\quad A_n(\omega) = \arg \left( f'_{\omega|_n}(x_{\sigma^n(\omega)}) \right).
\end{equation}
Let $X_1, Y_1$ be the random variables that describe the first step
\begin{equation} \label{Eq for X1}
X_1(\omega):= \tilde c(\omega_1,\sigma(\omega))=-\log f'_{\omega_1}(x_{\sigma(\omega)}), \quad Y_1 (\omega):=\tilde \theta(\omega_1,\sigma(\omega))=\arg f'_{\omega_1}(x_{\sigma(\omega)}).
\end{equation}
For all $n\in\mathbb{N}$ we can define the $n$th-step random variables by 
$$X_n(\omega) :=  - \log f_{\omega_n} ' \left( x_{\sigma^n (x_\omega)} \right)  = X_1 \circ \sigma^{n-1},\quad Y_n(\omega):=Y_1\circ \sigma^{n-1}$$
Let $\kappa$ be the distribution of  $X_1$, and let $\beta$ be the distribution of $Y_1$. Then  $X_n \sim \kappa$ and $Y_n\sim \beta$ for all $n$.  By \eqref{Eq C and C prime}  there are positive constants $D,D'\in \mathbb{R}$ such that   
$$\kappa \in \mathcal{P}([D,D']).$$
Note that the support of $\kappa$ is bounded away from $0$. Therefore, for all integer $n$ and $\omega\in A^\mathbb{N}$,
$$S_n(\omega) = \sum_{i=1} ^n X_i (\omega),\quad A_n(\omega) = \sum_{i=1} ^n Y_i (\omega) \mod 1.$$
So, both $S_n$ and $A_n$ can be seen as random walks. 

We proceed to define a normalization (stopping-time like) function  on $\mathcal{A}^\mathbb{N}$. Let $k>0$ and define
$$ \tau_k(\omega):=\min\{n: S_n(\omega) \geq k\}.$$
The input $k$  can take non-integer values.  Then for all $k>0$ and every $\omega\in A^\mathbb{N}$ we have, by  \eqref{Eq C and C prime}, 
$$-\log |f_{\omega|_{\tau_k(\omega)}}'(x_{\sigma^{\tau_k(\omega)}(\omega)})|= S_{\tau_k(\omega)} (\omega) \in [k, k+D'].$$
We also define a local $C^8$ norm on $\mathbb{T}\times[-1-D, D'+1]$ by, for $g\in C^8(\mathbb{T}\times[-1-D, D'+1])$,
\begin{equation}\label{Def local norm}
||g||_{C^8} = \max_{0\leq i,j \leq 4}  \left| \frac{\partial g}{\partial x^i \partial y^j} \right|_\infty.
\end{equation}
The following result provides  effective equidistribution  for a coupling of the random variables $S_{\tau_k}-k, A_{\tau_k}$. It will be instrumental in proving Theorem \ref{Main Theorem analytic} in Section \ref{Section Fourier decay}.
\begin{theorem} \label{Theorem equi} Let $\epsilon>0$ be the constant given by Theorem \ref{Theorem spectral gap}.   For all $k>D'+1$, \newline $g\in C^8(\mathbb{T}\times[-1-D, D'+1])$, and $\omega \in \mathcal{A}^\mathbb{N}$, we have:
$$\mathbb{E}\left( g \left( A_{\tau_k(\eta)} (\eta),\, S_{\tau_k(\eta)} (\eta)-k \right) \big|\, \sigma^{\tau_k(\eta)} \eta  =\omega \right) $$
$$= \frac{1}{\chi} \int_{\mathcal{A}^\mathbb{N}} \int_\mathcal{A} \int_\mathbb{T} \int_{-\tilde{c}(i,\omega)} ^0 g(s+\tilde{\theta}(i,\omega),x+\tilde{c}(i,\omega))\,dx\, d \lambda_\mathbb{T}(s)\, d\mathbf{p}(i)\, d\mathbb{P}(\omega)+e^{-\frac{\epsilon k}{2}} O\left(  ||g||_{C^8} \right).$$
\end{theorem}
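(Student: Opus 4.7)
This is a renewal-type equidistribution theorem for the joint norm/angle cocycle with exponential speed. The plan is to follow Li \cite{li2018fourier} (and our one-dimensional work \cite{algom2023polynomial}): unfold the conditional expectation into a sum involving iterates of $P_{s,\ell}$, pass to a contour integral of the resolvent via Laplace inversion, and then use the spectral gap Theorem \ref{Theorem spectral gap} to shift the contour past the imaginary axis, picking up the main term as the residue at $s=0$.

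\textbf{Step 1: Fourier reduction and unfolding.} Extend $g$ periodically in its second variable to a smooth function on $\mathbb{T}\times(\mathbb{R}/L\mathbb{Z})$ for some $L>D+D'+2$, and expand
$$g(s,x) = \sum_{\ell,m\in\mathbb{Z}} \hat g(\ell,m)\,\chi_\ell(e^{2\pi i s})\,e^{2\pi i m x/L},\qquad |\hat g(\ell,m)|\lesssim ||g||_{C^8}(1+|\ell|+|m|)^{-8}.$$
By linearity it suffices to handle each mode separately. Since $\mathbb{P}$ is Bernoulli and $\tau_k$ is a stopping time, the conditional expectation given $\sigma^{\tau_k}\eta=\omega$ unfolds as
$$\sum_{n\geq 1}\sum_{I\in\mathcal{A}^n}\mathbf{p}(I)\,\mathbf{1}_{\{\tilde c(I_-,\omega)<k\leq\tilde c(I,\omega)\}}\,g(\tilde\theta(I,\omega),\,\tilde c(I,\omega)-k),$$
where $I_-$ removes the last letter of $I$. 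For a fixed Fourier mode, the resulting renewal sum has Laplace transform in $k$ equal, up to explicit boundary contributions, to a smooth functional of the resolvent $(I-P_{s,\ell})^{-1}$ evaluated at $s=2\pi i m/L$ and at the point $x_\omega$.

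\textbf{Step 2: Contour shift and identification of the main term.} By Laplace inversion the expectation for each mode becomes $\tfrac{1}{2\pi i}\int_{\Re s=c_0}e^{sk}F_{\ell,m,\omega}(s)\,ds$ for some $c_0>0$. Theorem \ref{Theorem spectral gap} yields $||P_{s,\ell}^n||\leq C(1+|b|+|\ell|)^{1+\gamma}\alpha^n$ on the strip $|\Re s|<\epsilon$, which via the Neumann series gives a resolvent bound $||(I-P_{s,\ell})^{-1}||\lesssim(1+|b|+|\ell|)^{1+\gamma}/\mathrm{dist}(s,\mathrm{spec})$, except for a single simple pole at $s=0$ for the mode $\ell=0$. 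For $\ell\neq 0$ there is no pole in the strip because $P_{0,\ell}$ has spectral radius strictly less than $1$ (the non-concentration property from Theorem \ref{Theorem disint}(6) prevents an eigenvalue of modulus $1$). Shifting the contour to $\Re s=-\epsilon/2$ and applying the residue theorem in the mode $(\ell,m)=(0,0)$: the Perron-Frobenius projection onto the leading eigenfunction $\mathbf{1}$ of $P_{0,0}$, combined with $\lambda'(0)=-\chi$ where $\chi=\int\tilde c\,d\mathbf{p}\,d\mathbb{P}$ is the Lyapunov exponent, yields exactly the integral displayed in the theorem, with the factor $1/\chi$ emerging from the residue. The shifted contour contributes $e^{-\epsilon k/2}\cdot O((1+|\ell|+|m|)^{1+\gamma})$, and summing against $|\hat g(\ell,m)|\lesssim ||g||_{C^8}(1+|\ell|+|m|)^{-8}$ produces the total error $e^{-\epsilon k/2}\,O(||g||_{C^8})$.

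\textbf{Main obstacle.} The most delicate step is promoting the iterate bound of Theorem \ref{Theorem spectral gap} to a resolvent bound on the vertical line $\Re s=-\epsilon/2$ with control uniform in $(\omega,\ell)$; this is routine via analyticity of $s\mapsto P_{s,\ell}$ on $|\Re s|<\epsilon$ combined with the Neumann series, but requires careful bookkeeping of the constants so that the power $(1+|\ell|+|m|)^{1+\gamma}$ is absorbed by the $C^8$-Sobolev decay of $\hat g$. A secondary subtlety is verifying rigorously that $P_{0,\ell}$ has spectral radius strictly less than $1$ for $\ell\neq 0$; this follows by running the $L^2$-contraction argument of Section \ref{Section proof} in the regime $b=0$, $\ell\neq 0$, where the angle cocycle alone (via the NCP) supplies enough oscillation for the Dolgopyat operator construction to produce the required cancellation.
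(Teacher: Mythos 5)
Your overall strategy — unfold the stopped conditional expectation, pass to a transform, use the spectral gap of $P_{s,\ell}$ to shift the contour past $\Re s = 0$, read off the main term from the residue at $s=0$ in the $\ell=0$ mode and dominate the error by $e^{-\epsilon k/2}$ — is the same as the paper's. The paper packages this as Theorem~\ref{Theorem U} (resolvent bounds and pole structure derived from Theorem~\ref{Theorem spectral gap} via the Neumann series), followed by the renewal theorem Proposition~\ref{Proposition renewal} (Fourier inversion in both the norm and the angle variable, contour shift justified via the tempered-distribution bound of \cite[Lemma 4.28]{li2018fourier}), and then the residue process Proposition~\ref{Proposition 4.7}. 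Your Laplace-inversion-plus-residue-calculus is a minor technical variant, and your direct unfolding of the stopping time bundles the overshoot correction into one sum rather than splitting it off as the separate residue operator $E$; neither of these is a substantive difference.

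However, there is a genuine gap in the step where you claim that $P_{0,\ell}$ has spectral radius strictly less than $1$ for $\ell\neq 0$, and in particular in the claim that this ``follows by running the $L^2$-contraction argument of Section~\ref{Section proof} in the regime $b=0$, $\ell\neq 0$.'' That argument does not apply for bounded $|\ell|$: Theorem~\ref{Theorem spectral gap} is only valid for $|b|+|\ell|>R$, and the entire Dolgopyat operator construction of Section~\ref{Section construction} is built at the microscale $\tilde\epsilon = \epsilon_1/(|b|+|\ell|)$; when $|b|+|\ell|$ is of order one this scale is of order one and there is nothing to exploit --- in particular \eqref{Eq 5.7 Oh} is vacuous and \eqref{Eq 5.7 oh 2} loses all its force. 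For finitely many nonzero frequencies $\ell$ with $|\ell|\leq R$ one must instead run a quasi-compactness argument (a Lasota--Yorke inequality as in Lemma~\ref{Lemma iterations} gives essential spectral radius $<1$), plus an aperiodicity argument ruling out peripheral eigenvalues: any eigenvalue of modulus~$1$ for $P_{0,\ell}$ would force a cohomological identity for the angle cocycle, contradicting the non-concentration property of Theorem~\ref{Theorem disint}(6). This is exactly what Theorem~\ref{Theorem U} Part~(2) supplies, via \cite[Proposition 4.1]{Li2018decay} and the scheme in \cite[Chapter 11]{Benoist2016Quint}; your proof as written cannot substitute for it, because the Dolgopyat mechanism is simply the wrong tool at low frequency. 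Once you replace that claim with the correct compactness-plus-aperiodicity argument, the remainder of your proposal closes up with the paper's proof.
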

Theorem \ref{Theorem equi} is  a higher dimensional  version of \cite[Theorem 4.1]{algom2023polynomial}. To be more precise, in that Theorem we proved a version of Theorem \ref{Theorem equi} only for the norm cocycle. What is new and improved here is that this holds when we couple this cocycle with the angle cocycle. We also note that this is a conformal and smooth  version of a result of Li-Sahlsten \cite[Proposition 3.7]{Li2020Sahl} for self-affine IFSs.

For the proof, the main ingredient is a renewal Theorem with exponential speed, that we prove in the next subsection. In fact, once this result is established, one can derive Theorem \ref{Theorem equi} by what are nowadays standard arguments. Such arguments are originally due to Li \cite{li2018fourier}, and have subsequently appeared in \cite{algom2023polynomial, Li2020Sahl, li2018fourier, li2019trigonometric}, to name just a few examples. So, after our renewal Theorem is done (Section \ref{Section renewal}), we will discuss the only part that is different here compared with \cite[proof of Theorem 4.1]{algom2023polynomial}, which is the residue process. This discussion takes place in Section \ref{Section residue process}. The rest of the proof is similar to \cite[Theorem 4.1]{algom2023polynomial} and is thus omitted\footnote{In fact, the proof of this result appears with full details in the original (V1) version of the arXiv submission of this paper.}.

\subsection{A renewal theorem with exponential speed} \label{Section renewal}
We begin by stating the following Theorem, whose new parts are almost formal consequences of Theorem \ref{Theorem spectral gap}. We remind the reader that $\chi$ is the Lyapunov exponent of $\omega \mapsto c(\omega_1,\sigma(\omega))$.
\begin{theorem} \label{Theorem U}
Fix the constants $\epsilon$ and $\gamma>0$ given by  Theorem \ref{Theorem spectral gap}. Then:
\begin{enumerate}
\item \cite[Lemma 11.17]{Benoist2016Quint} For all $|a|<\epsilon$ and $b\in \mathbb{R},\ell \in \mathbb{Z}$, the transfer operator $P_{a+ib, \ell}$ is a bounded operator on $C^1(D)$. Furthermore,  it is analytic in $(a+ib,\, \ell)$.

\item \cite[Proposition 4.1]{Li2018decay} For every $\ell \in \mathbb{Z}$  there is an analytic operator $U_\ell(a+ib)$ on $C^1(D)$ where $a+ib \in \mathbb{C}, |a|<\epsilon$ and  $\ell\in \mathbb{Z}$  such that,  if $\ell=0$
$$(I-P_{-(a+ib),0})^{-1} = \frac{1}{\chi(a+ib)}N_0+U_0(a+ib),\, \text{ where we define } N_0(f):=\int f \, d \nu,$$
and otherwise
$$(I-P_{-(a+ib),\ell})^{-1} = U_\ell(a+ib).$$

\item \cite[Proposition 4.26]{li2018fourier} There exists some $C>0$ such that if $|a|<\epsilon$ then for all $a+ib \in \mathbb{C}$ and every $\ell \in \mathbb{Z}$,
$$||U_\ell(a+ib)||\leq C(1+\left| b\right| + \left| \ell \right| )^{1+\gamma}, \, \text{ w.r.t. the operator norm from \eqref{Eq B-Q norm}.} $$
\end{enumerate}
\end{theorem}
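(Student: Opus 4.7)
The plan is to derive all three parts as standard consequences of the spectral gap estimate Theorem \ref{Theorem spectral gap} together with general perturbation theory, following the template established by Li in \cite{li2018fourier, Li2018decay}.

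For Part (1), boundedness of $P_{a+ib,\ell}$ on $C^1(D)$ follows directly from differentiating the defining integral in Definition \ref{Def transfer operator} under the integral sign, using the uniform contraction bound $\rho$ and the derivative bound $\tilde{C}$ from \eqref{Eq tilde C}; analyticity in $s = a+ib$ is justified by dominated convergence applied to the $s$-derivative of $e^{2\pi s c(a,x)}$, which introduces only the bounded factor $c(a,x)$.

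For Part (2) with $\ell = 0$, I would invoke Kato's analytic perturbation theory for the analytic family $s \mapsto P_{-s,0}$. At $s = 0$ the operator $P_{0,0}$ has $1$ as a simple isolated eigenvalue (with eigenvector $\mathbf{1}$ and eigen-functional $f \mapsto \int f\, d\nu$), the isolation following from Theorem \ref{Theorem spectral gap} at $b = \ell = 0$. Perturbation theory then yields an analytic simple eigenvalue $\lambda(s)$ near $1$ with rank-one spectral projection $N(s)$; since $\lambda(0) = 1$ and $\lambda'(0) = -\chi$, the expansion $1 - \lambda(s) = \chi s (1 + O(s))$ gives the decomposition $(I - P_{-s,0})^{-1} = \frac{N_0}{\chi s} + U_0(s)$, where $U_0(s)$ consists of the analytic correction coming from $N(s) - N_0$ together with the resolvent on the complementary spectral subspace. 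For $\ell \neq 0$, Theorem \ref{Theorem spectral gap} furnishes an outright spectral radius bound $<1$ for $P_{-s,\ell}$ when $|a| < \epsilon$, so $(I - P_{-s,\ell})^{-1} = \sum_{n=0}^\infty P_{-s,\ell}^n$ converges to an analytic operator $U_\ell(s)$.

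For Part (3), the delicate step, I would split the Neumann series at a threshold $n_0$ of order $\log(|b|+|\ell|)$. For $n > n_0$ the spectral gap bound $\|P_{-s,\ell}^n\|_{C^1} \leq C(|b|+|\ell|)^{1+\gamma} \alpha^n$ from \eqref{Eq spectral gap} makes the tail summable with the desired polynomial prefactor. For $n \leq n_0$ I would use the a-priori estimate $\|P_{-s,\ell}^n\|_{(|b|+|\ell|)} \leq C_6 e^{nD'|a|}$ from Lemma \ref{Lemma iterations}; shrinking $\epsilon$ if needed so that $D'\epsilon$ is absorbed into the exponent, the initial segment contributes $O((|b|+|\ell|)^{1+\gamma})$ after converting back to the $C^1$ norm via \eqref{Eq norm dol theta}, which costs at most one additional factor of $|b|+|\ell|$. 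For $\ell = 0$ the same argument applies to $U_0(s)$ after subtracting the rank-one singular part, since the complementary projection inherits the quantitative spectral gap uniformly in a neighborhood of the imaginary axis. The main obstacle is the careful book-keeping needed to match the a-priori and spectral-gap regimes without losing the $(1+\gamma)$ exponent; this calibration is essentially routine given that both inputs are stated with the correct quantitative dependence on $|b|+|\ell|$.
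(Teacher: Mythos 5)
Your overall strategy matches the paper's: Part (1) is elementary boundedness plus analyticity of the exponential weight; Part (2) is Kato perturbation at $s=0$ for $\ell=0$ and a direct Neumann series otherwise; Part (3) sums the Neumann series and invokes Theorem \ref{Theorem spectral gap}. The paper itself does not give the argument in detail --- it outsources to \cite{Benoist2016Quint}, \cite{Li2018decay}, \cite{li2018fourier} and the one-dimensional version \cite[Theorem 3.1]{algom2023polynomial} --- and your proposal fills in essentially the same outline. Two points deserve correction, however.

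First, Theorem \ref{Theorem spectral gap} is conditioned on $|b|+|\ell|>R$. It therefore does \emph{not} tell you anything at $(b,\ell)=(0,0)$, so the isolation and simplicity of the eigenvalue $1$ for $P_{0,0}$ on $C^1(D)$ cannot be deduced from it; that is a quasi-compactness (Lasota--Yorke / Doeblin--Fortet) statement, which is exactly what the cited \cite[Lemma 11.17--11.18]{Benoist2016Quint} supplies. More seriously, your claim that for $\ell\neq 0$ ``Theorem \ref{Theorem spectral gap} furnishes an outright spectral radius bound $<1$'' is only true when $|b|+|\ell|>R$. For the compact intermediate range $0<|b|+|\ell|\leq R$ (e.g.\ $\ell=1$, $b=0$ if $R>1$) the spectral gap theorem says nothing, and one must separately argue that $1\notin\sigma(P_{ib,\ell})$ there --- a non-arithmeticity/aperiodicity fact that ultimately rests on the non-concentration of the angle cocycle (Part (6) of Theorem \ref{Theorem disint}), combined with analyticity in $s$ to produce a uniform resolvent bound on a compact parameter set. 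Without this step the definition of $U_\ell$ and the uniform estimate in Part (3) are incomplete on that range.

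Second, the Neumann-series split at a threshold $n_0\sim\log(|b|+|\ell|)$ in Part (3) is superfluous. Estimate \eqref{Eq spectral gap} already holds for \emph{every} integer $n>0$ once $|b|+|\ell|>R$, so $\sum_{n\geq 0}\|P_{a+ib,\ell}^n\|_{C^1}\leq 1+\tfrac{C}{1-\alpha}(|b|+|\ell|)^{1+\gamma}$ directly, with no need for the a-priori bound of Lemma \ref{Lemma iterations} in that regime, and no loss of an extra power of $|b|+|\ell|$ from norm conversion. The a-priori bound is what you use on the compact intermediate range alluded to above, where it combines with the aperiodicity/compactness argument --- not in the large-frequency regime, where the spectral gap is self-sufficient.
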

This Theorem is a higher dimensional version of \cite[Theorem 3.1]{algom2023polynomial}. The key (and only) difference lies with existence of the integer frequency $\ell$. Other than that the idea and proof is similar, and follows along the proofs as in e.g. \cite[Chapter 11]{Benoist2016Quint}, specifically \cite[Lemma 11.18]{Benoist2016Quint}. Part (3) is of particular importance to us, and this is where Theorem \ref{Theorem spectral gap} does its magic,  by the  identity
$$(I-P_{a+ib, \ell})^{-1} = \sum_{n=0} ^\infty P_{a+ib, \ell} ^n.$$
See e.g. \cite[Proposition 4.26]{li2018fourier} or \cite[Theorem 3.1]{algom2023polynomial} for more details.

Let us now discuss renewal theory. Let $f$ be a  non-negative bounded function on $D\times \mathbb{T} \times \mathbb{R}$ and  let $(z,t)\in D\times \mathbb{R}$. We define, recalling \eqref{The angle cocycle} and \eqref{The der cocycle}, a renewal operator by
$$Rf(z,\, t):= \sum_{n=0} ^\infty \int f(\eta.z,\, \theta( \eta,\,z),\,  c(\eta,\, z) -t) d\mathbf{p}^n (\eta).$$
This is well defined as $f$ is non-negative. For all $(z,x,y)\in D\times \mathbb{T}\times \mathbb{R}$ let $f_z :\mathbb{T}\times \mathbb{R} \rightarrow \mathbb{R}$  be the function
$$f_z (x,y):=f(z,x,y).$$
Next, for every $(z, \ell, v) \in D\times \mathbb{Z}\times \mathbb{C}$ we define the Fourier transform 
$$\hat{f}(z, \ell, v) := \int e^{i v y}\cdot e^{i \ell u} f(z,u,y)\,du\,dy=\hat{f_z}(\ell, v).$$
The next Proposition is the key ingredient towards Theorem \ref{Theorem equi}. It is a higher dimensional version of \cite[Proposition 3.2]{algom2023polynomial}. However, note that there are some substantial differences in both the statement and the proof due to our higher dimensional setting (in particular, the dependence on the $\ell$).
\begin{Proposition} \label{Proposition renewal}
Fix the constant $\epsilon$ given by Theorem \ref{Theorem spectral gap}, and suppose $C\subseteq \mathbb{T}\times \mathbb{R}$ is compact. Let $f$ be a function on $D\times \mathbb{T} \times \mathbb{R}$ that is non-negative, bounded, and continuous . Suppose $f_z \in C_C ^{8} (\mathbb{T}\times \mathbb{R})$ for all $z\in D$, and that $\hat{f}_z (\ell, v) \in C^1(D)$ for all  $(\ell, v)\in \mathbb{Z}\times \mathbb{C}$. Assume
$$\sup_{z\in D}  \left( \left|\frac{\partial}{\partial x^{4} \partial y^{4}} f_z \right|_{L^1} + \left|f_z\right|_{L^1} \right) <\infty.$$
Then  for all $z\in D$ and $t\in \mathbb{R}$
$$Rf(z, t) = \frac{1}{\chi} \int_D \int_{\mathbb{T}} \int_{-t} ^\infty f(z,x,y)\,dy\, dx\, d\nu(z) + e^{-\epsilon\cdot |t|}O\left(e^{\epsilon | \supp f_{z}|} \left(  \left|\frac{\partial}{\partial x^{4} \partial y^{4}} f_z \right|_{L^1} + \left|f_z\right|_{L^1}  \right) \right)$$
where $|\text{supp}(f_{ })|$ is the sup over  the norms of the elements of $\supp(f_{  z} )$.
\end{Proposition}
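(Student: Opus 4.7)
The plan is to compute $Rf(z,t)$ via Fourier inversion on $\mathbb{T}\times\mathbb{R}$, identify the resulting random walk sum with a resolvent of the twisted transfer operator, and then extract the main term together with the exponentially small error using Theorem \ref{Theorem U}. Writing
\begin{equation*}
f(z,x,y)=\frac{1}{2\pi}\sum_{\ell\in\mathbb{Z}}\int_\mathbb{R}\hat f(z,\ell,v)\,e^{-i\ell x-ivy}\,dv,
\end{equation*}
substituting into the definition of $R$, and interchanging summation in $n$ with the Fourier integrals (to be justified using the $C^8$ hypothesis together with Theorem \ref{Theorem U}(3)), the cocycle identities for $c$ and $\theta$ give
\begin{equation*}
Rf(z,t)=\frac{1}{2\pi}\sum_{\ell\in\mathbb{Z}}\int_\mathbb{R}\Bigl[\sum_{n\ge 0}P_{-iv/(2\pi),\,-\ell}^n\bigl(\hat f(\cdot,\ell,v)\bigr)(z)\Bigr]e^{ivt}\,dv.
\end{equation*}
By analytic continuation from the half-plane $\Re s>0$, where the Neumann series for the transfer operator converges absolutely, the bracketed inner sum equals $(I-P_{-iv/(2\pi),-\ell})^{-1}\hat f(\cdot,\ell,v)$.

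Now apply Theorem \ref{Theorem U}(2). For $\ell=0$ this resolvent decomposes as $\chi(iv/(2\pi))^{-1}N_0+U_0(iv/(2\pi))$, while for $\ell\ne 0$ only the analytic piece $U_{-\ell}(iv/(2\pi))$ appears. The $N_0$-contribution is the main term: since $\chi(s)\sim s\chi$ near $s=0$ (a standard consequence of $P_{0,0}$ having a simple eigenvalue at $1$ with eigenfunction $\mathbf{1}$ and Lyapunov exponent $\chi$), the classical Fourier identity $(2\pi i)^{-1}\int(iv)^{-1}e^{ivt}\,dv=\mathbf{1}_{[0,\infty)}(t)$ applied to the primitive in $y$ of $f$, combined with Fourier inversion in $x$, yields
\begin{equation*}
\frac{1}{\chi}\int_D\int_\mathbb{T}\int_{-t}^\infty f(z,x,y)\,dy\,dx\,d\nu(z),
\end{equation*}
matching the claimed main term.

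For the error (the $U_0$-piece at $\ell=0$ plus all of the $\ell\ne 0$ contributions) we exploit the analyticity of $s\mapsto U_\ell(s)$ on $|\Re s|<\epsilon$ guaranteed by Theorem \ref{Theorem U}(1) and shift the $v$-contour from $\mathbb{R}$ to $\mathbb{R}-i\,\mathrm{sgn}(t)\,\epsilon$. The factor $e^{ivt}$ then supplies the desired $e^{-\epsilon|t|}$ decay; the cost is that $\hat f(z,\ell,v\mp i\epsilon)$ picks up a factor bounded by $e^{\epsilon|\supp f_z|}$, exactly matching the exponential prefactor in the stated error. On the shifted contour, Theorem \ref{Theorem U}(3) bounds the operator norm of $U_{-\ell}(iv/(2\pi))$ by $C(1+|v|+|\ell|)^{1+\gamma}$.

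The main obstacle is ensuring absolute convergence of the resulting double sum/integral against this polynomial growth; this is where the $C^8$ regularity enters. Four integrations by parts in each of the two Fourier variables give
\begin{equation*}
|\hat f(z,\ell,v)|\lesssim (1+|\ell|)^{-4}(1+|v|)^{-4}\Bigl(\bigl|\tfrac{\partial^8 f_z}{\partial x^4\partial y^4}\bigr|_{L^1}+|f_z|_{L^1}\Bigr)
\end{equation*}
uniformly in $z$, and the analogous bound persists on the shifted contour up to the $e^{\epsilon|\supp f_z|}$ factor already extracted. Using $(1+|v|+|\ell|)^{1+\gamma}\lesssim (1+|v|)^{1+\gamma}(1+|\ell|)^{1+\gamma}$ reduces the integrand to a product of the form $(1+|v|)^{1+\gamma-4}(1+|\ell|)^{1+\gamma-4}$, which is absolutely integrable/summable since the regularity exponent dominates $1+\gamma$. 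This delivers the stated big-$O$. The remaining technicalities---analyticity and nonvanishing of $\chi'(0)$, Fubini, and the analytic continuation from $\Re s>0$---proceed as in the one-dimensional analogue \cite[Proposition 3.2]{algom2023polynomial}.
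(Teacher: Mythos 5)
Your argument follows essentially the same route as the paper: Fourier inversion of $f_z$, identification of $\sum_n P^n$ with the resolvent $(I-P_{s,\ell})^{-1}$, extraction of the main term from the rank-one pole at $(s,\ell)=(0,0)$ via Theorem~\ref{Theorem U}(2), and a Paley--Wiener contour shift in $v$ for the error using Theorem~\ref{Theorem U}(3) together with the $(1+|v|)^{-4}(1+|\ell|)^{-4}$ Fourier decay from the $C^8$ hypothesis. Two small bookkeeping points: since the Neumann series genuinely diverges at $v=\ell=0$, the passage from $\sum_n P^n$ to the resolvent cannot be a bare analytic continuation but needs the damped operator $B_a$ with $a>0$, Fubini for $a>0$, and monotone/dominated convergence as $a\to 0^+$ (which the paper does and you defer to in your closing remark), and your contour shift is the same device as the paper's invocation of \cite[Lemma 4.28]{li2018fourier}.
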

We remark that Proposition \ref{Proposition renewal} is a higher dimensional IFS-type version of the renewal Theorem of Li \cite[Theorem 1.1 and Proposition 4.27]{li2018fourier}. It can also be seen as a conformal analogue (i.e. without a proximality assumption) of the renewal Theorem  of  Li-Sahlsten \cite[Proposition 3.4]{Li2020Sahl} for self-affine IFSs.

\begin{proof}
First, we claim that for all $(z,t)\in D\times \mathbb{R}$,
\begin{equation} \label{Eq Lemma 4.6 Li}
Rf(z, t) = \frac{1}{\chi}  \int_{D} \int_\mathbb{T}  \int_{-t} ^\infty  f(z,x,y)\,dy\, dx\, d\nu(z) + \lim_{a\rightarrow 0^+} \frac{1}{2\pi} \int_\mathbb{Z} \int_\mathbb{R} e^{-it b}  U_\ell(a-ib)\hat{f}( z,\ell, b)\,db \, d\ell.
\end{equation}
This is similar to  \cite[Proposition 4.14]{boyer2016rate} and \cite[Lemma 4.6]{Li2018decay}.

Let $a\geq 0$ and define
$$B_a f(z,t):= \int e^{-a \sigma(i,\, z)} f(\eta.z,\, \theta(i,z),\, c(i,z)+t)d\mathbf{p}(i).$$
Let $B:=B_0$. Then 
$$Rf(z,-t) = \sum_{n\geq 0} B^n f(z,t).$$
As $f\geq 0$  by  monotone convergence 
$$\lim_{a\rightarrow 0^+} \sum_{n\geq 0 } \int e^{-a \sigma(\eta,\, z)} f(\eta.z,\, \theta( \eta,\, z),\, \sigma(\eta,\,z)+t) d\mathbf{p}^n (\eta) = \sum_{n\geq 0 } \int  f(\eta.z,\, \theta( \eta,\, z),\, \sigma(\eta,\,z)+t) d\mathbf{p}^n (\eta).$$
Therefore
\begin{equation} \label{Eq (4.3) Li}
\sum_{n\geq 0} B^n (f)(z,t) = \lim_{a\rightarrow 0^+} \sum_{n\geq 0 } B_a ^n (f) (z,t).
\end{equation}

For every $d\in D$ we are assuming that $f_z \in C_C ^{8} (\mathbb{T}\times \mathbb{R}) \subset L^1 (\mathbb{T}\times \mathbb{R})$. By the inverse Fourier transform, we thus have for every $a>0$
$$\sum_{n\geq 0 } B_a ^n (f) (z,t) = \sum_{n\geq 0 } \int e^{-a \sigma(\eta,\, z)} f(\eta.z,\, \theta( \eta,\, z),\, \sigma(\eta,\,z)+t) d\mathbf{p}^n (\eta)$$
\begin{equation} \label{Eq (4.4) Li}
= \sum_{n\geq 0 } \int e^{-a \sigma(\eta,\, z)}  \frac{1}{2\pi} \int_\mathbb{Z} \int_\mathbb{R} e^{ ib( \sigma(\eta,z)+t)}e^{i\ell \cdot \theta( \eta,\, z)} \hat{f}(\eta.z,\,\ell,\, b)\, db\, d\ell\, d\mathbf{p}^n (\eta).
\end{equation}
We claim that \eqref{Eq (4.4) Li} is absolutely convergent: As $f_z$ is compactly supported for every $z\in D$, for every $(z,\ell)\in D\times \mathbb{Z}$, $\hat{f_z}(\ell,\,\cdot)\in C^\omega (\mathbb{C})$. For all $z\in D$, $k_1,k_2\in \mathbb{N}\cup \lbrace 0 \rbrace$, $\delta\geq 0$, $\ell \in \mathbb{Z}\setminus \lbrace 0 \rbrace$ and $b\in \mathbb{R}$,
$$\left| \hat{f_z}(\ell, \pm i\delta+b) \right| \leq e^{\delta | \supp f_z|} \frac{1}{|\theta|^{k_1}\cdot |\ell|^{k_2}} \left|\frac{\partial}{\partial x^{k_2} \partial y^{k_1}} f_y \right|_{L^1}.$$
For $\ell=0$ a similar corresponding bound holds. Therefore, inputting $k_1=k_2=0$ and $k_1=k_2=4$, 
\begin{equation} \label{Eq 4.60}
\left|  \hat{f_z}(\pm i\delta+b) \right|  \leq e^{\delta | \supp f_z|} \frac{2}{|b|^{4}\cdot |\ell|^4+1} \left( \left|\frac{\partial}{\partial x^{4} \partial y^{4}} f_z \right|_{L^1} + |f_z|_{L^1} \right)
\end{equation}

So, for $a>0$, defining 
$$C_0=\int_\mathbb{R} \frac{2}{|b|^{4}+1}\, db$$
and 
$$C_\ell=\int_\mathbb{R} \frac{2}{|b|^{4}\cdot |\ell|^4+1}\, db$$
otherwise, we have:
$$\sum_{n\geq 0 } \int e^{-a \sigma(\eta,\, z)}  \frac{1}{2\pi} \int_\mathbb{Z} \int_\mathbb{R} e^{ ib( \sigma(\eta,z)+t)}e^{i\ell \cdot \theta( \eta,\, z)} \hat{f}(\eta.z,\,\ell,\, b)\, db\, d\ell\, d\mathbf{p}^n (\eta) $$
$$\leq  C_0\cdot e^{\delta | \supp f_z|} \cdot  \sup_{y}  \left( |f_z ^{(4)}|_{L^1} + |f_y |_{L^1} \right) \cdot  \sum_{n\geq 0 } \int e^{-a \sigma(\eta,\, z)}\cdot d\mathbf{p}^n (\eta)$$
$$+\sum_{\ell\neq 0} C_\ell\cdot e^{\delta | \supp f_z|} \cdot  \sup_{z}  \left( \left|\frac{\partial}{\partial x^{4} \partial y^{4}} f_z \right|_{L^1} + |f_z |_{L^1} \right) \cdot  \sum_{n\geq 0 } \int e^{-a \sigma(\eta,\, z)}\cdot d\mathbf{p}^n (\eta)$$
$$\leq  C\cdot e^{\delta | \supp f_z|} \cdot  \sup_{z}  \left( |f_z ^{(4)}|_{L^1} + |f_z|_{L^1} \right) \cdot  \sum_{n\geq 0 }  e^{-a D\cdot n} $$
$$+ \sum_{\ell \neq 0} C_\ell\cdot e^{\delta | \supp f_y|} \cdot  \sup_{z}  \left(  \left|\frac{\partial}{\partial x^{4} \partial y^{4}} f_z \right|_{L^1} + |f_z|_{L^1} \right) \cdot  \sum_{n\geq 0 }  e^{-a D\cdot n}<\infty.$$
Note that we used the fact that the $C_\ell$s are summable.

The absolute convergence in  \eqref{Eq (4.4) Li} is thus proved. Thus,  Fubini can be applied  to  change the order of integration. As for all $(\ell, v) \in \mathbb{T}\times \mathbb{C}$ we have $\hat{f}(\cdot ,\ell,v) \in C^1 (D)$, Theorem \ref{Theorem U} applies and we have
\begin{eqnarray*}
\sum_{n\geq 0 } B_a ^n (f) (z,t) &=& \frac{1}{2\pi} \int_\mathbb{Z} \int_\mathbb{R} \sum_{n\geq0} \int e^{(-a+ ib)\sigma(\eta,z)} e^{i\ell \cdot \theta( \eta,\, z)} \hat{f}(\eta.z,\,\ell,\, b)\, d\mathbf{p}^n (\eta)e^{itb}\, db \, d\ell \\
 &=& \frac{1}{2\pi} \int_\mathbb{Z} \int_\mathbb{R} \sum_{n\geq0} P_{-a+ ib, \ell} ^n  \hat{f}( z,\,\ell,\, b) e^{itb}\, db\, d\ell \\ 
  &=& \frac{1}{2\pi} \int_\mathbb{Z} \int_\mathbb{R} (1-P_{-(a- ib,\ell)})^{-1}  \hat{f}(z,\,\ell,\, b) e^{itb}\, db\, d\ell \\
   &=& \frac{1}{2\pi} \int_\mathbb{R} \left( \frac{1}{\chi(a- ib)}N_0+U_0(a- ib ) \right)  \hat{f}(z,\,0,\, b) e^{itb} \,db  \\
   &+& \frac{1}{2\pi} \int_{\mathbb{Z} \setminus \lbrace 0 \rbrace} \int_\mathbb{R} U_\ell(a- ib )   \hat{f}(z,\,\ell,\, b)  e^{itb} \,db\, d\ell.
\end{eqnarray*}
By the identity $\frac{1}{a-ib} = \int_0 ^\infty e^{-(a-ib)u} du$ for $a>0$, and as  $\hat{f_z}(0,\cdot ) \in L^1 (\mathbb{R})$ for every $z \in D$ by \eqref{Eq 4.60}, we obtain
\begin{eqnarray*}
\frac{1}{2\pi} \int_\mathbb{R} \frac{N_0}{\chi(a-ib)}  \hat{f}(z,\,0,\, b) e^{it b} db &=& \frac{1}{2\pi} \frac{1}{\chi} \int_{D} \int_\mathbb{R} \frac{\hat{f}(z,\,0,\, b)}{a-ib} e^{itb} \, db\, d\nu(z)\\
&=& \frac{1}{\chi} \int_{D} \int_0 ^\infty \int_\mathbb{T} f(z,\,y,\, u+t) e^{-au}\, du\, d\nu(z)\, d \lambda_\mathbb{T}(y).\\
\end{eqnarray*}
Taking $a\rightarrow 0^+$, as $f$ is integrable with respect to $\nu\times du \times \lambda_\mathbb{T}$, by monotone convergence this converges to 
$$\frac{1}{\chi}  \int_{D} \int_\mathbb{T}  \int_{-t} ^\infty  f(z,x,y)\,dy\,d\lambda_{\mathbb{T}} (x)\,  d\nu(z).$$ 
This concludes the proof of \eqref{Eq Lemma 4.6 Li}.

Now, by Theorem \ref{Theorem U} and \eqref{Eq 4.60} we have a  bound on the norm of $U_\ell$. So, applying dominated convergence
$$\lim_{a\rightarrow 0^+} \frac{1}{2\pi} \int_\mathbb{Z} \int_\mathbb{R} e^{-it b}  U_\ell(a-ib)\hat{f}( z,\ell, b)\,db \, d\ell =  \frac{1}{2\pi} \int_\mathbb{Z} \int_\mathbb{R} e^{-it b}  U_\ell(-ib)\hat{f}( z,\ell, b)\,db \, d\ell.$$
For every $\ell\in \mathbb{Z}$ we put
$$T_z(\ell, v) = U_\ell(-iv)\hat{f}(z, \ell, v).$$
Thus, $T_z(\ell,\cdot )$ is a tempered distribution with analytic continuation to $|\Im v |<\epsilon$ by  Theorem \ref{Theorem U} and Theorem \ref{Theorem spectral gap} and, so that for all $|b|<\epsilon$ we have $T_z(\ell,\, \cdot + ib)\in L^1$. What is more, by \eqref{Eq 4.60} and Theorem \ref{Theorem U}, for all $|b|<\epsilon$, 
$$\sup_{\eta <|b|} ||T(\ell,\,\cdot + i\eta)||_{L^1}<\infty.$$
So, by \cite[Lemma 4.28]{li2018fourier} we have, for every $0<\delta<\epsilon$
$$\left| \frac{1}{2\pi} \int e^{-it b}  U_\ell(-ib)\hat{f}(z,\ell,b)\,db \right| = \left| \check{T_{z} (\ell,\cdot )}(t) \right| \leq e^{-\delta |t|} \max \left| T_z(\ell, \pm i \delta+b) \right|_{L^1 (b)}.$$
So, by Theorem \ref{Theorem U} and \eqref{Eq 4.60}, for all integer $\ell\neq 0$
$$\max \left|  U_\ell(\ell, -ib\pm\delta)\hat{f}(z, \ell, b\pm i\delta) \right|_{L^1 (b)} \leq  \int \left||U_\ell(\ell, \pm \delta-ib)\right||\cdot  \frac{e^{\delta | \supp f_z|}}{|b|^{4}\cdot |\ell|^4+1}  \left( \left|\frac{\partial}{\partial x^{4} \partial y^{4}} f_z \right|_{L^1} + |f_z|_{L^1} \right) \,db$$
$$ \leq C\cdot  e^{\delta | \supp f_z|} \left( \left|\frac{\partial}{\partial x^{4} \partial y^{4}} f_z \right|_{L^1} + |f_z|_{L^1} \right) \int \frac{1}{|b|^{4}\cdot |\ell|^4+1}\cdot (1+|b|+|\ell|)^{1+\gamma}\, db $$
$$= C\cdot  C_\ell\cdot  O\left(e^{\delta | \supp f_z|} \left( \left|\frac{\partial}{\partial x^{4} \partial y^{4}} f_z \right|_{L^1} + |f_z|_{L^1} \right)  \right), $$
 Noting that a similar bound holds when $\ell= 0$, and that the $C_\ell$s are summable with respect to $\ell$, by \eqref{Eq Lemma 4.6 Li} and the previous discussion, the Proposition follows.

\end{proof}

\subsection{Residue process} \label{Section residue process} In this Section we discuss a certain residue process, that is important in the proof scheme of Theorem \ref{Theorem equi}. This is the only substantially different part of the proof, except for Proposition \ref{Proposition renewal}, compared with that of \cite[Theorem 4.1]{algom2023polynomial}. So, we content ourselves with proving Proposition \ref{Proposition 4.7} below; putting it into the proof scheme of \cite[Theorem 4.1]{algom2023polynomial} would give the proof of Theorem \ref{Theorem equi}, with minor variations.

Fix a non-negative bounded Borel function $f$ on $[D,D']\times \mathbb{T}\times \mathbb{R}$. For all $k\in \mathbb{R}$ and $z\in D$ Let the residue operator be defined as
$$Ef(z, k) := \sum_{n\geq 0} \int \int f(c(i, \eta.z),\,\theta(i, \eta.z)+\theta(\eta,z),\, c(\eta,z)-k) d\mathbf{p}^n (\eta)d\mathbf{p}(i).$$
Recall that for every $z\in D$ we let $f_z :\mathbb{T}\times\mathbb{R} \rightarrow \mathbb{T}\times\mathbb{R}$ be the function 
$$f_z (x,y):=f(z,x,y).$$ 
Here is the main result of this subsection:
\begin{Proposition} \label{Proposition 4.7}  Fix a compact subset  $C\subseteq \mathbb{T} \times  \mathbb{R}$. Let $f$ be a function on $[D,D']\times \mathbb{T} \times \mathbb{R}$ that is non-negative, bounded, compactly supported, and continuous. Suppose $f_w \in C_C ^{8} (\mathbb{T}\times  \mathbb{R})$ for all $w\in [D,D']$, and that $\hat{f}_w (\ell, \theta) \in C^1([D,D'])$ for all $(\ell,  \theta) \in \mathbb{Z}\times \mathbb{R}$. Assume furthermore that
$$\sup_{w}  \left( \left|\frac{\partial}{\partial x^{4} \partial y^{4}} f_w \right|_{L^1} + \left|f_w\right|_{L^1} \right) <\infty.$$
 Then for all $k>0$ and every $z\in D$,
$$Ef(z, k) = \frac{1}{\chi} \int_{-k} ^\infty \int_\mathbb{T} \int_{\mathcal{A}} \int_{D} f( c(i,z),\, s+ \theta(i, z),\, y)\, d\nu (z)\, d\mathbf{p}(i)\, d\lambda_\mathbb{T}(s)\, dy$$
$$+e^{-\epsilon\cdot k}  O\left(e^{\epsilon | \supp f|} \sup_{z,z'\in D} \left( \left|\frac{\partial}{\partial x^{4} \partial y^{4}} f_{z'} \right|_{L^1} + \left|f_z\right|_{L^1} \right)  \right).$$
\end{Proposition}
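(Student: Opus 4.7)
My plan is to reduce the residue operator $E$ to the renewal operator $R$ of Proposition \ref{Proposition renewal} via a clever repackaging. Define an auxiliary function $F : D \times \mathbb{T} \times \mathbb{R} \to \mathbb{R}$ (where $D$ here denotes the unit disc) by
$$F(w, u, t) := \int_{\mathcal{A}} f\bigl(c(i, w),\, u + \theta(i, w),\, t\bigr)\, d\mathbf{p}(i).$$
Substituting $w = \eta.z$, $u = \theta(\eta, z)$, $t = c(\eta, z) - k$ into $F$ recovers exactly the integrand (over $i$) in the definition of $Ef(z, k)$, so one immediately obtains the identity $Ef(z, k) = RF(z, k)$. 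This is the key observation, after which everything reduces to applying the renewal theorem to $F$ and translating back.

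The second step is to verify that $F$ satisfies the hypotheses of Proposition \ref{Proposition renewal}. Non-negativity, boundedness and continuity of $F$ are inherited from $f$ since the cocycles $c(i, \cdot)$ and $\theta(i, \cdot)$ are $C^\omega$ on $D$ and take values in $[D, D']$ and $\mathbb{T}$ respectively. The slice $F_w$ is a finite convex combination of $\mathbb{T}$-translates of the functions $f_{c(i, w)} \in C_C^8(\mathbb{T} \times \mathbb{R})$, so $F_w \in C_C^8$ with
$$|\supp F_w| \leq |\supp f| + 2\pi, \quad |F_w|_{L^1} \leq \sup_{z \in [D, D']} |f_z|_{L^1},$$
and a parallel bound holds for the $\partial^8 / \partial x^4 \partial y^4$ derivative. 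For the Fourier side,
$$\widehat{F_w}(\ell, v) = \sum_{i \in \mathcal{A}} p_i\, e^{i\ell\, \theta(i, w)}\, \widehat{f_{c(i, w)}}(\ell, v),$$
which is $C^1$ in $w$ by the chain rule, using the assumed $C^1$-dependence $\widehat{f_{(\cdot)}}(\ell, v) \in C^1([D, D'])$ and the smoothness of the cocycles.

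Finally, I would apply Proposition \ref{Proposition renewal} to $F$ at $t = k$. Expanding $F$ inside the main term and reordering integrals by Fubini yields
$$\frac{1}{\chi} \int_{-k}^\infty \int_\mathbb{T} \int_\mathcal{A} \int_D f\bigl(c(i, z),\, s + \theta(i, z),\, y\bigr)\, d\nu(z)\, d\mathbf{p}(i)\, d\lambda_\mathbb{T}(s)\, dy,$$
which is exactly the desired main term. The error from Proposition \ref{Proposition renewal}, namely $e^{-\epsilon k}\, O\bigl(e^{\epsilon |\supp F_w|}(|\partial^8 F_w / \partial x^4 \partial y^4|_{L^1} + |F_w|_{L^1})\bigr)$, collapses via the estimates in the second step to the claimed error. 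The whole argument is essentially bookkeeping once the reduction $Ef = RF$ is made; the only real technical point is verifying the $C^1$-regularity of $\widehat{F_w}$ in $w$, which is where the smoothness of $c, \theta$ and the hypothesis on $\widehat{f_w}$ combine.
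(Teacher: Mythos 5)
Your proposal is correct and follows essentially the same route as the paper: the paper defines the same auxiliary function (called $Qf$ there rather than $F$), observes the identity $Ef = R(Qf)$, checks that $z \mapsto \hat{f}_{c(i,z)}(\ell,\theta) \in C^1(D)$ using the $C^\omega$ smoothness of the cocycles, and then applies Proposition \ref{Proposition renewal}. The extra bookkeeping you include (translation-invariance of the support/$L^1$ bounds, the explicit Fourier formula for $\widehat{F_w}$) is consistent with, and slightly more detailed than, what the paper records.
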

\begin{proof}
Let  $z \in D,x\in \mathbb{T}$ and let $y\in \mathbb{R}$. We define a function $Qf$  on $D\times \mathbb{T} \times \mathbb{R}$  via
$$Qf(z, x, y) = \int f( c(i,z), x+ \theta(i, z),\, y)d\mathbf{p}(i).$$
Note that $Qf$ is a non-negative, bounded, and Borel function.

Then, with the notations of Section \ref{Section renewal},
$$Ef(z, k) = \sum_{n\geq 0 } \int Qf(\eta.z,\, \theta(\eta,z),\, \sigma(\eta, z)-k) d\mathbf{p}^n (\eta) = R(Qf)(z,k).$$
As $\Phi$ is a $C^\omega (\mathbb{C})$, the function
$$z\mapsto \left(\theta(i, \eta.z),\,c(i,z)) \right)\in C^8(D,\, \mathbb{T}\times [D',D])$$
for every $i\in \mathcal{A}$. Thus, $z\mapsto \hat{f}_{c(i,z)} (\ell, \theta) \in C^1(D)$ for every $i\in \mathcal{A}$ and $(\ell, \theta) \in \mathbb{Z}\times \mathbb{R}$. The result is now a direct application of Proposition \ref{Proposition renewal} for the function  $Qf(z,w,k)$.
\end{proof}

\section{Proof of Theorem \ref{Main Theorem analytic}} \label{Section Fourier decay}
The goal of this Section is prove Theorem \ref{Main Theorem analytic}. Out of all of our work in the previous Sections, we will only make essential use of the equidistribution Theorem \ref{Theorem equi}. Recall that it follows from our previous work that  we may assume any IFS $\Phi$ as in Theorem \ref{Main Theorem analytic}, and any self-conformal measure $\nu_\mathbf{p}$ w.r.t. $\Phi$, satisfy the conditions of Theorem \ref{Theorem equi}, perhaps up to inducing. As usual, we consider the Bernoulli measure $\mathbb{P}=\mathbf{p}^\mathbb{N}$ on the product space $\mathcal{A}^\mathbb{N}$.

We want to show that for some $\alpha>0$
$$\mathcal{F}_q (\nu) = O\left( \frac{1}{ \left| q \right| ^\alpha} \right), \text{ taking } |q|\rightarrow \infty.$$
The proof is based upon \cite[Section 5]{algom2023polynomial}, with some modifications due to our higher dimensional setting.

Fix a large $|q|$ and pick $k=k(q)\approx \log |q|$; the exact choice will be indicated later. Fix the constant $\epsilon>0$ from Theorem \ref{Theorem equi} (the size of the strip where we have spectral gap in Theorem \ref{Theorem spectral gap}).   Consider the (actual) stopping time $\beta_k :\mathcal{A}^\mathbb{N} \rightarrow \mathbb{N}$ defined as
$$\beta_k (\omega):= \min \lbrace m:\, \left| f_{\omega|_m} ' (x_0) \right| < e^{-k-\frac{\epsilon k}{18}} \rbrace, \, \text{ for some prefixed } x_0 \in D.$$
The following linearization Theorem is essentially proved in \cite{algom2020decay}.
 Recall the random variables $S_{\tau_k}$, $\tau_k$ that were defined in Section \ref{Section equi}. For all $t\in \mathbb{C}$ denote by $M_t:\mathbb{C}\rightarrow \mathbb{C}$  the complex multiplication map $M_t (x)=t\cdot x$. 
\begin{theorem}   (Linearization) \label{Theorem linearization}
For all $\beta \in (0,1)$ we have
$$\left| \mathcal{F}_q (\nu) \right|^2 \leq \int \left| \mathcal{F}_q (M_{f'_{\tau_k (\omega)}} \circ f_{\omega|_{\tau_k{(\omega)}+1} ^{\beta_k(\omega)}}  \nu) \right|^2 d\mathbb{P}(\omega)+O\left(|q| e^{-(k+\frac{k\epsilon}{18})-\beta\cdot \frac{k\epsilon}{18}} \right).$$
Additionally, we can find a uniform constant $C'>1$ so that for all $\omega \in \mathcal{A}^\mathbb{N}$
\begin{equation} \label{Eq. uniform push}
\left| f_{\omega|_{\tau_k{(\omega)}+1} ^{\beta_k(\omega)}} ' (x) \right| = \Theta_{C'} \left(e^{-\frac{\epsilon k}{18}} \right).
\end{equation}
\end{theorem}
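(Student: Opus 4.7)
The plan is to combine the self-conformality of $\nu$ with a Cauchy--Schwarz step and a first-order Taylor expansion of the outer composition. Since $\beta_k$ is a stopping time on $\mathcal{A}^\mathbb{N}$, iterating the self-conformal identity $\nu=\sum_a p_a f_a\nu$ yields the stopping-time decomposition
\[
\nu = \int f_{\omega|_{\beta_k(\omega)}}\,\nu\, d\mathbb{P}(\omega).
\]
Taking the Fourier transform at $q$ and applying Cauchy--Schwarz gives
\[
|\mathcal{F}_q(\nu)|^2 \leq \int |\mathcal{F}_q(f_{\omega|_{\beta_k(\omega)}}\,\nu)|^2\, d\mathbb{P}(\omega),
\]
and I factor $f_{\omega|_{\beta_k(\omega)}} = f_{\omega|_{\tau_k(\omega)}} \circ g_\omega$ with $g_\omega := f_{\omega|_{\tau_k(\omega)+1}^{\beta_k(\omega)}}$.

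I would then linearize the outer factor $f_{\omega|_{\tau_k(\omega)}}$ to first order around a reference point in $g_\omega(D)$, replacing $f_{\omega|_{\tau_k(\omega)}}\circ g_\omega(z)$ by a complex-affine expression $a_\omega + f'_{\tau_k(\omega)}\cdot g_\omega(z)$; the constant $a_\omega$ is only a global phase under the Fourier transform and does not affect $|\mathcal{F}_q|$. Thus the leading-order contribution to the integrand is $|\mathcal{F}_q(M_{f'_{\tau_k(\omega)}}\circ g_\omega\,\nu)|^2$. To control the Taylor remainder, the uniform bound \eqref{Eq tilde C} on $\nabla(\log|f'_{\xi|_n}|)$ implies $|f''_{\omega|_{\tau_k(\omega)}}(y)| \lesssim |f'_{\omega|_{\tau_k(\omega)}}(y)| \asymp e^{-k}$ on $D$, while the second assertion \eqref{Eq. uniform push} (proved below) gives $\mathrm{diam}\,g_\omega(D) \lesssim e^{-\epsilon k / 18}$. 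Hence the Taylor remainder on $g_\omega(D)$ is $O(e^{-k - \epsilon k / 9})$. Exponentiating against $q$ inflates this by a factor $|q|$, and opening the square with $|\mathcal{F}_q|\leq 1$ keeps the cross term linear in the error, which yields the stated integrand bound with any $\beta \in (0,1)$ absorbing universal constants.

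For \eqref{Eq. uniform push}, I would use the chain rule and \eqref{Eq bdd distortion}. The definitions of $\tau_k$ and $\beta_k$, combined with \eqref{Eq bdd distortion}, give $|f'_{\omega|_{\tau_k(\omega)}}(x_0)| \asymp e^{-k}$ and $|f'_{\omega|_{\beta_k(\omega)}}(x_0)| \asymp e^{-k - \epsilon k / 18}$, up to multiplicative constants depending only on $\rho_{\min}$ and $D'$. Applying the chain rule to $f_{\omega|_{\beta_k}} = f_{\omega|_{\tau_k}} \circ g_\omega$ and using \eqref{Eq bdd distortion} to move evaluation points gives
\[
|g'_\omega(x_0)| \asymp e^{-\epsilon k / 18},
\]
and a second application of \eqref{Eq bdd distortion} extends this to a uniform estimate on all of $D$. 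The main obstacle I expect is keeping the linearization remainder sharp: the quadratic-in-diameter (rather than linear) bound is what makes the exponent $k + (1+\beta)\epsilon k / 18$ achievable, and this quadratic behavior rests squarely on the distortion estimate for $f''$ that we inherit from the $C^\omega$ structure through \eqref{Eq tilde C}; without the analyticity this step would have to be replaced by an explicit non-linearity condition.
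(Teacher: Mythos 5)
Your proposal is correct and follows the same route the paper itself sketches (the paper defers the details to the one-dimensional source, specifically Lemma~2.3 and Lemmas~4.3--4.5 of the cited work, but the underlying structure is exactly what you wrote). The ingredients match: the stopping-time decomposition $\nu = \int f_{\omega|_{\beta_k(\omega)}}\nu\,d\mathbb{P}(\omega)$ from iterating the self-conformal identity, Cauchy--Schwarz with respect to the probability measure $\mathbb{P}$, the factorization $f_{\omega|_{\beta_k}} = f_{\omega|_{\tau_k}}\circ g_\omega$, the first-order Taylor linearization of the outer composite, and the chain-rule-plus-bounded-distortion argument for \eqref{Eq. uniform push}.

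Two small remarks. First, you correctly isolate the source of the quadratic remainder: since $\log f'_{\omega|_{\tau_k}}$ is holomorphic on the convex disc $D$, the bound \eqref{Eq tilde C} reads $|f''_{\omega|_{\tau_k}}|\leq \tilde C\,|f'_{\omega|_{\tau_k}}|$ via $|\nabla\log|f'||=|f''/f'|$, and then Taylor along the straight segment gives a remainder $\lesssim e^{-k}\cdot\mathrm{diam}(g_\omega(D))^2 \lesssim e^{-k-2\epsilon k/18}$. This is slightly stronger than the paper's cited $|x-y|^{1+\beta}$ estimate (which in the $C^\omega$ case allows $\beta$ arbitrarily close to $1$ anyway), and since $(1+\beta)<2$ for $\beta\in(0,1)$, it comfortably yields the stated error $O\bigl(|q|\,e^{-(k+\epsilon k/18)-\beta\epsilon k/18}\bigr)$. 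Second, to be fully precise, the linearization point should be taken to be $x_{\sigma^{\tau_k(\omega)}(\omega)}\in g_\omega(D)$, so that $f'_{\tau_k(\omega)}$ in the statement agrees with $e^{-S_{\tau_k}+iA_{\tau_k}}$ as used downstream in Section~\ref{Section Fourier decay}; you leave the evaluation point implicit, but that is where it must sit.
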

\begin{proof}
This relies on the following Lemma from \cite{algom2020decay}.  It really only requires the fact that all the maps in $\Phi$ are $C^{1+\gamma}$ for some (here, any) $\gamma>1$, and that $D$ is convex.
\begin{Lemma} \cite[Lemma 2.3]{algom2020decay} For every $\beta\in(0,\gamma)$ there exists $\epsilon\in(0,1)$ such that for all $n\geq 1$, $g\in\Phi^{n}$ and $x,y\in D$ satisfying $|x-y|<\epsilon$, $$\big|g(x)-g(y)-g'(y) (x-y)\big|\leq|g'(y)|\cdot|x-y|^{1+\beta}.$$\end{Lemma}
What the Lemma  means is that for every $y\in B_\epsilon (x)$ the function $g$ may be approximated exponentially fast on $B_\epsilon(x)$ by an affine map with linear part $g'(y)$. Its proof is similar to that of \cite[Lemma 2.3]{algom2020decay}. To be a bit more specific, using Cauchy's integral formula, it is possible to show that
$$\big|g(x)-g(y)-g'(y) (x-y)\big|\leq  C\cdot |x-y|^{1+\beta}$$
for any $x,y\in D$ such that $|x-y|\ll 1$, $g\in \Phi$, and some uniform constant $C=C(\Phi)>0$. This proves  \cite[equation (5)]{algom2020decay}, which is the only part in the proof of \cite[Lemma 2.3]{algom2020decay} that uses a tool that is unavailable in our setting (the mean value Theorem). So, from here, one can simply follow the steps of that proof. Alternatively, one can appeal more directly to Cauchy's formula in order to bound the second derivative with the sup norm over the first derivative.

With the Lemma at our disposal, Theorem \ref{Theorem linearization} follows from a combination of $\mathbb{C}$-type variants of \cite[Lemma 4.3, Lemma 4.4, and Claim 4.5]{algom2020decay}, using the bounded distortion principle \cite[Theorem 2.1]{algom2020decay}  - recall \eqref{Eq bdd distortion}. We omit the details.
\end{proof}

For all $k>0$ consider the measurable partition $\mathcal{P}_k$ of $\mathcal{A}^\mathbb{N}$ defined by
$$\omega \sim_{\mathcal{P}_k} \eta \iff \sigma^{\tau_{k} (\omega)} \omega = \sigma^{\tau_{k} (\eta)} \eta.$$
Let $\mathbb{E}_{\mathcal{P}_k (\xi)} (\cdot )$ denote  expectation w.r.t. the  conditionals of $\mathbb{P}$ on  cells associated with a $\mathbb{P}$-typical $\xi$.  By Theorem \ref{Theorem linearization}, for all $0<\beta<1$,
\begin{equation} \label{Eq gk}
| \mathcal{F}_q (\nu) |^2 \leq \int \mathbb{E}_{\mathcal{P}_k (\xi)} \left( \left| \mathcal{F}_q (M_{f'_{\tau_k (\omega)}} \circ f_{\xi|_{\tau_k{(\xi)}+1} ^{\beta_k(\xi)}}  \nu) \right|^2 \right) d\mathbb{P}(\xi)+O\left(|q| e^{-(k+\frac{k\epsilon}{18})-\beta\cdot \frac{k\epsilon}{18}} \right).
\end{equation}
Additionally, for all $\xi \in \mathcal{A}^\mathbb{N}$,
$$ \left| f_{\xi|_{\tau_k{(\xi)}+1} ^{\beta_k(\xi)}} ' (x) \right| = \Theta_{C'} \left(e^{-\frac{\epsilon k}{18}} \right).$$

Next, fix some $\xi \in \mathcal{A}^\mathbb{N}$ and $k>0$, and consider the smooth function on $\mathbb{T}\times \mathbb{R}$ given by
$$g_{k,\xi} (x,y)=\left| \mathcal{F}_q \left( M_{e^{(-y-k)+i\cdot x}} \circ f_{\xi|_{\tau_k{(\xi)}+1} ^{\beta_k(\xi)}}  \nu \right) \right|^2.$$
Recalling \eqref{Def local norm} and assuming $|q|\cdot e^{-k}\rightarrow \infty$ as $q\rightarrow \infty$,
$$||g_{k,\xi}||_{C^8} \leq O\left( \left( |q|\cdot e^{-k} \right)^8 \right).$$
This bound on the $C^8$ norm holds uniformly over $\xi$, which is critically important for us.  By Theorem \ref{Theorem equi},  for  $\mathbb{P}$-typical $\xi$
$$\mathbb{E}_{\mathcal{P}_k (\xi)} \left( \left| \mathcal{F}_q (M_{f'_{\tau_k (\omega)}}  \circ f_{\xi|_{\tau_k{(\xi)}+1} ^{\beta_k(\xi)}}  \nu) \right|^2 \right) =  \mathbb{E}_{\mathcal{P}_k (\xi)} \left( g_{k, \xi} \left(A_{\tau_k(\omega)},\, S_{\tau_k(\omega)}-k \right) \right) $$
$$= \frac{1}{\chi} \int_{\mathcal{A}^\mathbb{N}} \int_\mathcal{A} \int_\mathbb{T} \int_{-\tilde{c}(i,\omega)} ^0 g_{k,\xi}(s+\tilde{\theta}(i,\omega),x+ \tilde{c}(i,\omega)) \,dx d \lambda_\mathbb{T}(s) d\mathbf{p}(i) d\mathbb{P}(\omega)$$
$$ + e^{-\frac{\epsilon k}{2}} O\left( \left( |q|\cdot e^{-k} \right)^8 \right)$$
$$ =  \frac{1}{\chi} \int_{\mathcal{A}^\mathbb{N}} \int_\mathcal{A} \int_\mathbb{T} \int_{-\tilde{c}(i,\omega)} ^0 g_{k,\xi}(s+\tilde{\theta}(i,\omega),x+\tilde{c}(i,\omega))\,dx d \lambda_\mathbb{T}(s) d\mathbf{p}(i) d\mathbb{P}(\omega)$$
$$+ e^{-\frac{\epsilon k}{2}}\left( |q|\cdot e^{-k} \right)^8 O\left( 1\right).$$

Note that for $f\geq 0$ we have 
$$\frac{1}{\chi} \int_{\mathcal{A}^\mathbb{N}} \int_\mathcal{A} \int_\mathbb{T} \int_{-\tilde{c}(i,\omega)} ^0 f(s+\tilde{\theta}(i,\omega),x+\tilde{c}(i,\omega))\,dx d \lambda_\mathbb{T}(s) d\mathbf{p}(i) d\mathbb{P}(\omega) \leq \frac{1}{\chi} \int_{0} ^{D'} \int_\mathbb{T}  f(s,t) d\lambda_\mathbb{T}(s) \,dt, $$
So, using the previous equality with \eqref{Eq gk}, and by the definition of $g_{k,\xi}$ we have:
\begin{eqnarray*}
\left| \mathcal{F}_q (\nu) \right|^2 &\leq& \int \mathbb{E}_{\mathcal{P}_k (\xi)} \left( \left| \mathcal{F}_q (M_{f'_{\tau_k (\omega)}}  \circ f_{\xi|_{\tau_k{(\xi)}+1} ^{\beta_k(\xi)}}  \nu) \right|^2 \right) d\mathbb{P}(\xi)+O\left(|q| e^{-(k+\frac{k\epsilon}{18})-\beta\cdot \frac{k\epsilon}{18}} \right) \\
&= & \frac{1}{\chi} \int_{\mathcal{A}^\mathbb{N}} \int_\mathcal{A} \int_\mathbb{T} \int_{-\tilde{c}(i,\omega)} ^0 g_{k,\xi}(s+\tilde{\theta}(i,\omega),x+\tilde{c}(i,\omega))\,dx d \lambda_\mathbb{T}(s) d\mathbf{p}(i) d\mathbb{P}(\omega) d\mathbb{P}(\xi)\\
&+&O\left(|q| e^{-(k+\frac{k\epsilon}{18})-\beta\cdot \frac{k\epsilon}{18}} \right)+e^{-\frac{\epsilon k}{2}}\left( |q|\cdot e^{-k} \right)^8 O\left( 1\right)\\
&\leq  & \frac{1}{\chi}  \int \int_{0} ^{D'} \int_\mathbb{T} \left| \mathcal{F}_q \left( M_{e^{(-y-k)+ix}} \circ f_{\xi|_{\tau_k{(\xi)}+1} ^{\beta_k(\xi)}}  \nu \right) \right|^2 d\lambda_\mathbb{T}(x)\, dy d\mathbb{P}(\xi)+O\left(|q| e^{-(k+\frac{k\epsilon}{18})-\beta\cdot \frac{k\epsilon}{18}} \right)\\
&&+e^{-\frac{\epsilon k}{2}}\left( |q|\cdot e^{-k} \right)^8 O\left( 1\right).
\end{eqnarray*}

Finally, we require the following  Lemma of Hochman \cite{Hochman2020Host} to treat this oscillatory integral.
\begin{Lemma} \cite[Lemma 2.6]{algom2020decay} (Oscillatory integral) For all $\xi \in  \mathcal{A}^\mathbb{N}$, every $k>0$, and all $r>0$
$$\int_0 ^{D'} \int_\mathbb{T} \left| \mathcal{F}_q \left( M_{e^{(-y-k)+ix}} \circ f_{\xi|_{\tau_k{(\xi)}+1} ^{\beta_k(\xi)}}  \nu \right) \right|^2\, d\lambda_\mathbb{T}(x)\,dy  =O\left( \frac{1}{r|q|e^{-(k+\frac{\epsilon k}{18})}}+\sup_{y} \nu(B_r (y)) \right)$$
\end{Lemma}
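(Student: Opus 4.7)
The plan is to expand the squared Fourier transform, swap with Fubini, and analyze the resulting oscillatory kernel via a smooth-majorant / Plancherel argument. Abbreviate $f := f_{\xi|_{\tau_k(\xi)+1}^{\beta_k(\xi)}}$ and $\mu := f\nu$. Identifying $\mathbb{C}=\mathbb{R}^2$ via the real inner product, complex multiplication satisfies $\langle q, t\cdot z\rangle = \langle q\bar t, z\rangle$, so $\mathcal{F}_q(M_t\mu) = \mathcal{F}_{q\bar t}(\mu)$. Setting $p := q\bar t = |q|e^{-y-k-ix}$, the map $(x,y) \mapsto p$ sends $\mathbb{T}\times [0,D']$ bijectively onto the annulus $A := \{p\in \mathbb{R}^2 : |q|e^{-k-D'} \leq |p| \leq |q|e^{-k}\}$, with Jacobian $dx\,dy = dp/|p|^2$. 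Since $|p|^{-2} \leq e^{2(k+D')}/|q|^2$ on $A$, this reduces the claim to estimating
\[ \frac{e^{2(k+D')}}{|q|^2} \int_A |\mathcal{F}_p(\mu)|^2\, dp.\]

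Next I would pass to the spatial side via a smooth majorant. Pick a radial $\psi \in C_c^\infty(\mathbb{R}^2)$ with $\psi \geq \mathbf{1}_A$, $\|\psi\|_\infty = O(1)$, and support in the ball $B_{2R}$ where $R := |q|e^{-k}$. By the Hankel scaling for radial Schwartz functions, $\hat\psi$ satisfies $|\hat\psi(w)| \leq C_N\, R^2 (1+R|w|)^{-N}$ for every $N \geq 0$. Applying Plancherel and swapping back to the spatial side gives
\[ \int_A |\mathcal{F}_p(\mu)|^2\, dp \leq \int \psi(p)|\mathcal{F}_p(\mu)|^2\, dp = \iint \hat\psi(f(z)-f(z'))\, d\nu(z)\, d\nu(z'),\]
which I would then split at the threshold $|z-z'| = r$. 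The near-diagonal piece, using $|\hat\psi| \leq CR^2$, is bounded by $CR^2 \sup_y \nu(B_r(y))$; together with the prefactor $e^{2(k+D')}/|q|^2$ this contributes $O(\sup_y \nu(B_r(y)))$ (since $R^2 e^{2k}/|q|^2 = 1$), yielding the second term in the claim.

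For the far-diagonal piece, I would combine \eqref{Eq. uniform push} with the mean-value estimate \eqref{MVT} and the bounded distortion principle \eqref{Eq bdd distortion} to obtain a uniform constant $c>0$ such that $|f(z) - f(z')| \geq c\, e^{-\epsilon k/18} |z-z'|$ for all $z, z' \in D$. Using the Schwartz decay of $\hat\psi$ with $N=2$ then yields $|\hat\psi(f(z)-f(z'))| \leq C R^2 (c R r e^{-\epsilon k/18})^{-2} = O(e^{\epsilon k/9}/r^2)$ whenever $|z-z'| \geq r$, so the far-diagonal contribution is $O(e^{\epsilon k/9}/r^2)$. After the prefactor, this becomes $O(e^{2k+\epsilon k/9}/(r|q|)^2)$, which is dominated by $O(e^{k+\epsilon k/18}/(r|q|)) = O\bigl(1/(r|q|e^{-(k+\epsilon k/18)})\bigr)$ precisely when $r|q| \geq e^{k+\epsilon k/18}$; in the complementary regime the right-hand side of the claim exceeds $1$, and the trivial bound $|\mathcal{F}_q|\leq 1$ already suffices. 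The main obstacle is the two-scale bookkeeping: arranging the Schwartz decay of $\hat\psi$ at the annular scale $R$ to absorb the contraction factor $e^{-\epsilon k/18}$ from $f$, so that the promised denominator $r|q| e^{-(k+\epsilon k/18)}$ emerges with exactly the right exponent.
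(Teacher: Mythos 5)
Your overall strategy — rewrite $\mathcal{F}_q(M_t\mu)=\mathcal{F}_{q\bar t}(\mu)$, change variables to the annulus $A=\{|q|e^{-k-D'}\le|p|\le|q|e^{-k}\}$ with Jacobian $dp/|p|^2$, dominate $\mathbf{1}_A$ by a radial bump $\psi$ supported in $B_{2R}$ with $R=|q|e^{-k}$, pass to the spatial side via Plancherel so the integral becomes $\iint\hat\psi\bigl(f(z)-f(z')\bigr)\,d\nu\,d\nu$, and split at $|z-z'|=r$ — is exactly the Hochman-style smooth-majorant argument that the paper delegates to \cite{Hochman2020Host} and \cite{algom2020decay}, and the bookkeeping (the identity $R^2e^{2k}/|q|^2=1$ together with \eqref{Eq. uniform push}) does produce the advertised $1/(r|q|e^{-(k+\epsilon k/18)})+\sup_y\nu(B_r(y))$. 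Incidentally, taking $N=1$ in the Schwartz decay gives the first term directly and avoids the case split on $r|q|$.

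The one genuine gap is the far-diagonal lower bound $|f(z)-f(z')|\ge c\,e^{-\epsilon k/18}|z-z'|$. You cite \eqref{MVT} and \eqref{Eq bdd distortion} for it, but both of these control only the \emph{modulus} of $f'$ and give an \emph{upper} bound $|f(z)-f(w)|\le|f'(v)||z-w|$; neither controls the rotation of $\arg f'$ along the segment $[z',z]$, so cancellation in $(z-z')\int_0^1 f'\bigl(z'+t(z-z')\bigr)\,dt$ is not a priori excluded. Applying \eqref{MVT} to $f^{-1}$ does not rescue it either, because $f(D)$ need not be convex. What is actually required is a conformal (Koebe-type) distortion estimate for the univalent map $f$ on a compact set in the interior of $D$; this is standard for conformal IFS and is part of what the cited \cite{algom2020decay} supplies, but it is not a consequence of the mean-value inequality as recorded in \eqref{MVT}. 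With that citation replaced by the appropriate distortion lemma, the argument is complete and matches the one the paper refers to.
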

We remark that this Lemma is proved in \cite{Hochman2020Host} for the $1$-dimensional case; the proof in dimension $2$ is similar. Also, we use \eqref{Eq. uniform push} to bound uniformly  the first term on the RHS.
$$ $$

\noindent{\textbf{Conclusion of proof}} We have just shown that $\left| \mathcal{F}_q (\nu) \right|^2$ can be bounded by the sum of several terms that we now discuss. These bounds depend  implicitly  on $\mathbf{p}$ and $\Phi$. Also, we ignore global multiplicative constants,  omitting  big-$O$ notations.

Linearization: For all $\beta\in (0,1)$ (that is up to us to choose),
$$|q| e^{-(k+\frac{k\epsilon}{18})-\beta\cdot \frac{k\epsilon}{18}};$$
Equidistribution: 
$$e^{-\frac{\epsilon k}{2}}\left( |q|\cdot e^{-k} \right)^8 ;$$
Oscillatory integral: For all $r>0$
$$\frac{1}{r|q|e^{-(k+\frac{\epsilon k}{18})}}+\sup_{y} \nu(B_r (y)).$$

\noindent{\textbf{Choice of parameters}} Given $|q|$ we pick $k=k(q)$ so that 
$$|q|=e^{k+\frac{k\epsilon}{17}}.$$
Also, let $r=e^{-\frac{k \epsilon}{1000}}$ and pick $\beta = \frac{1}{2}$.  Thus:

Linearization:
$$|q| e^{-(k+\frac{k\epsilon}{18})-\beta\cdot \frac{k\epsilon}{18}} = e^{\frac{k \epsilon}{17} -\frac{k\epsilon}{18}- \frac{k\epsilon}{36}},\, \text{ this decays exponentially fast in }k.$$
Equidistribution: 
$$e^{-\frac{\epsilon k}{2}}\left( |q|\cdot e^{-k} \right)^8 =  e^{-\frac{\epsilon k}{2} +\frac{8k\epsilon}{17}},\, \text{ this decay exponentially fast in }k.$$
Oscillatory integral: For some $d=d(\nu)>0$ we have
$$\frac{1}{r|q|e^{-(k+\frac{\epsilon k}{18})}} + \sup_{y} \nu(B_r (y)) \leq  \frac{1}{e^{-\frac{k\epsilon}{1000}+ \frac{k \epsilon}{17} -\frac{k\epsilon}{18}}}+e^{-\frac{d \epsilon k}{1000}},\, \text{ this decay exponentially fast in }k.$$
Here we  used  \cite[Proposition 2.2]{Feng2009Lau}, where it is proved that for some $C>0$, for all $r>0$ small enough, $\sup_{y} \nu(B_r (y))\leq Cr^d$. 

Finally, summing the error terms, we can see that for some $\alpha>0$,
$$ \mathcal{F}_q (\nu)  = O\left(e^{-k\alpha}\right).$$
Notice that as $|q|\rightarrow \infty$,  $k\geq C_0 \cdot \log |q|$ for a uniform constant $C_0>0$. Our claim follows.   \hfill{$\Box$}

\section{Acknowledgements}
We thank Amlan Banaji, Simon Baker, Tuomas Sahlsten,  and Osama Khalil, for  their comments on this project. We also thank the referee for useful remarks and for pointing out many typos.  This research was supported by Grant No. 2022034 from the United States - Israel Binational Science Foundation 
(BSF), Jerusalem, Israel.

\bibliography{bib}{}
\bibliographystyle{plain}

\end{document}